\numberwithin{equation}{section}
\numberwithin{equation}{section}
\newtheorem{thm}{Theorem}[section]
 \newtheorem{cor}[thm]{Corollary}
 \newtheorem{prop}[thm]{Proposition}
 \newtheorem{defn}[thm]{Definition}
\def\vtr{\vartriangleright}
\def\vtl{\vartriangleleft}
\begin{document}
\title[Classification of four-dimensional anti-dendriform algebras]{Classification of four-dimensional anti-dendriform algebras whose
associated associative algebra has the center of dimension one}
		
\author{Jobir Adashev, Artem Lopatin,  Zafar Normatov, and Shokhsanam Solijonova}

\address[Jobir Adashev]{Institute of Mathematics, Uzbekistan Academy of Sciences, Univesity Street, 9, Olmazor district, Tashkent, 100174, Uzbekistan}
\email{adashevjq@mail.ru}

\address[Artem Lopatin]{Universidade Estadual de Campinas (UNICAMP),  651 Sergio Buarque de Holanda, 13083-859 Campinas, SP, Brazil}
\email{dr.artem.lopatin@gmail.com}

\address[Zafar Normatov]{School of Mathematics, Jilin University, Changchun, 130012, China}
\email{z.normatov@mathinst.uz}

\address[Shokhsanam Solijonova]{	
National University of Uzbekistan, Univesity Street, 4, Olmazor district, Tashkent, 100174, Uzbekistan}
\email{sh.solijonova@mail.ru}

\begin{abstract} This article is devoted to the classification of anti-dendriform algebras that are associated with associativity. They are characterized as algebras with two operations whose sum is associative. In the paper all four-dimensional complex anti-dendriform algebras associated to four-dimensional associative algebras with one-dimensional center are classified.
\end{abstract}

\subjclass[2020]{16P10, 17A30. }
\keywords{associative algebra; anti-dendriform algebra.}
	
	\maketitle

\section{Introduction}
\

The idea of considering algebras with two or more multiplications is not new, but it is under a certain consideration. So,  associative dialgebras as vector spaces equipped with two associative operations were considered in \cite{Ch,Fra,U}.  A dual notion of the Poisson algebra by exchanging the roles of the two binary operations in the Leibniz rule defining the Poisson algebra was introduced by Bai, Bai, Guo, and Wu (see, references in \cite{aae24,sar24}).   In \cite{f25} considered a new family of modified double Poisson brackets and mixed double Poisson algebras. 
 In paper \cite{GP} authors defined a dendriform di- or trialgebra in an arbitrary variety Var of binary algebras (associative, commutative, Poisson, etc.) and proved that every dendriform dialgebra can be embedded into a Rota--Baxter algebra of weight zero in the same variety, and every dendriform trialgebra can be embedded into a Rota--Baxter algebra of nonzero weight. In these papers \cite{b,Khr,Le1,LR,RRB} also study algebras that are defined by several multiplications and require satisfying identities.

The notion of anti-dendriform algebras as a new approach of splitting the associativity is introduced in \cite{DGC}. Anti-dendriform algebras are characterized as algebras with two operations whose sum is associative and the negative left and right multiplication operators compose the bimodules of the sum associative algebras, justifying the notion due to the comparison with the corresponding characterization of dendriform algebras.

Note that there is an anti-structure for pre-Lie algebras, namely anti-pre-Lie algebras, introduced in \cite{LB}, which are characterized as the Lie-admissible algebras. There is a new approach to splitting operations, motivated by the study of anti-pre-Lie algebras. We introduce the notion of anti-dendriform algebras, still keeping the property of splitting the associativity, but it is the negative left and right multiplication operators that compose the bimodules of the sum associative algebras, instead of the left and right multiplication operators doing so for dendriform algebras. Such a characterization justifies the notion, and the following commutative diagram holds, which is the above diagram with replacing dendriform and pre-Lie algebras by anti-dendriform and anti-pre-Lie algebras respectively.

$$\begin{matrix} \mbox{ anti-dendriform algebras} & \longrightarrow & \mbox{anti-pre-Lie algebras} \cr \downarrow & &\downarrow\cr {\rm
associative\quad algebras} & \longrightarrow & {\rm Lie\quad
algebras} \cr\end{matrix}$$

The classification of any class of algebras is a fundamental and very difficult problem. It is one of the first problems that one encounters when trying to understand the structure of this class of algebras. This paper is devoted to classifying anti-dendriform algebras associated with null-filiform associative algebras and three-dimensional algebras. The algebraic classification (up to isomorphism) of algebras of dimension $n$ from a certain variety defined by a certain family of polynomial identities is a classic problem in the theory of non-associative algebras. There are many results related to the algebraic classification of small-dimensional algebras in many varieties of
associative and non-associative algebras \cite{BT22,ak21,afk21,EM22,CKLS,fkk21,fkkv22,Kay,mk22}. The algebraic classification gives a way to obtain the geometric classification \cite{MS}.
Algebraic classifications of $2$-dimensional algebras \cite{petersson}, $3$-dimensional evolution algebras \cite{ccsmv}, $3$-dimensional anticommutative algebras \cite{Kobayashi,japan}, $3$-dimensional diassociative algebras \cite{RIB} and classification of non-isomorphic complex $3$-dimensional transposed Poisson algebras \cite{BOK},  have been given.

The paper is organized as follows. In Section 2, we give necessary  definitions and formulate some preliminary results. In particular, to every anti-dentriform algebra in a unique way is associated an associative algebra, which is known to be nilpotent (see Proposition~\ref{prop_2.2} and Corollary~\ref{cor_2.5}). Then a classification of all four-dimensional complex nilpotent indecomposible associative algebra is formulated in Theorem~\ref{table}, which was proven in~\cite{BRR}. In Section 3 we obtain a classification of all four-dimensional anti-dendriform algebras associated with four-dimensional associative algebras which one-dimensional center (see Theorems~\ref{theo_3.1}, \ref{theo_3.2}, \ref{theo_3.3}, \ref{theo_3.4}, \ref{theo_3.5}, \ref{theo_3.6}, \ref{theo_3.7}, \ref{theo_3.8}).

\section{Preliminaries}

Throughout the paper, all vector spaces and algebras
are finite-dimensional and over the complex field $\mathbb{C}$ unless otherwise stated. Let $A$ be a vector space with two bilinear operations
\[
\vtr: A\otimes A \rightarrow A, \qquad \vtl: A\otimes A \rightarrow A.
\]

Define a bilinear operation $\cdot$ as follows:
\begin{equation}\label{cdot}
x \cdot y=x \vtr y+ x\vtl y, \quad \forall \ x, y \in A.
\end{equation}
If $(A,\cdot)$ is an associative algebra, then we call the triple $(A, \vtr, \vtl)$ an \textit{associative admissible algebra} and  $(A,\cdot)$ \textit{the associated associative algebra of} $(A, \vtr, \vtl)$.

\begin{defn}[Loday, Frabetti, Chapoton,  Goichot~\cite{Cha1}]
Let $A$ be a vector space with two bilinear operations $\vartriangleright$ and $\vartriangleleft$. The triple $(A, \vtr, \vtl)$ is called an anti-dendriform algebra if the following equations hold:
\begin{align}\label{id1}
(x\vtr y)\vtl z=x\vtr(y\vtl z),\\\label{id2}
x\vtr (y\vtr z)=-(x\cdot y)\vtr z,\\\label{id3}
x\vtr (y\vtr z)=-x\vtl(y\cdot z),\\\label{id4}
x\vtr (y\vtr z)=(x\vtl y)\vtl z,\\\label{id5}
(x\cdot y)\vtr z=x\vtl(y\cdot z),\\\label{id6}
-(x\cdot y)\vtr z=(x\vtl y)\vtl z,\\\label{id7}
-x\vtl (y\cdot z)=(x\vtl y)\vtl z.
\end{align}
\end{defn}

\begin{prop}[\cite{DGC}]\label{prop_2.2}
Let $(A, \vtr, \vtl)$ be an anti-dendriform algebra. Define a bilinear operation $\cdot$ by \eqref{cdot}.
Then $(A, \cdot)$ is an associative algebra, called
\textbf{the associated associative algebra} of $(A, \vtr, \vtl)$. Furthermore, $(A, \vtr, \vtl)$ is called
\textbf{a compatible anti-dendriform algebra structure} on $(A, \cdot)$.
\end{prop}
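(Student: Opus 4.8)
The plan is to show that the operation $\cdot$ defined by \eqref{cdot} is associative, i.e. that $(x\cdot y)\cdot z = x\cdot(y\cdot z)$ for all $x,y,z\in A$. The natural approach is to expand both sides using the definition $x\cdot y = x\vtr y + x\vtl y$ and then reduce the resulting eight terms on each side using the seven defining identities \eqref{id1}--\eqref{id7}. Expanding the left-hand side gives
\begin{equation*}
(x\cdot y)\cdot z = (x\cdot y)\vtr z + (x\cdot y)\vtl z,
\end{equation*}
and expanding the right-hand side gives
\begin{equation*}
x\cdot(y\cdot z) = x\vtr(y\cdot z) + x\vtl(y\cdot z).
\end{equation*}
So the goal reduces to proving $(x\cdot y)\vtr z + (x\cdot y)\vtl z = x\vtr(y\cdot z) + x\vtl(y\cdot z)$.

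The key step is to recognize that each of the four terms appearing here is controlled directly by the axioms. First I would observe that identity \eqref{id5}, namely $(x\cdot y)\vtr z = x\vtl(y\cdot z)$, immediately matches the first term on the left with the second term on the right. It therefore suffices to establish the complementary identity
\begin{equation*}
(x\cdot y)\vtl z = x\vtr(y\cdot z).
\end{equation*}
To obtain this, I would express both sides in terms of the common ``pivot'' quantity $x\vtr(y\vtr z)$, which several axioms equate to other expressions. Concretely, \eqref{id2} gives $x\vtr(y\vtr z) = -(x\cdot y)\vtr z$, while \eqref{id3} gives $x\vtr(y\vtr z) = -x\vtl(y\cdot z)$; these together re-derive \eqref{id5} and confirm the term-matching is consistent. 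The remaining relation is assembled analogously from \eqref{id6} and \eqref{id7}, which read $-(x\cdot y)\vtr z = (x\vtl y)\vtl z$ and $-x\vtl(y\cdot z) = (x\vtl y)\vtl z$; combined with \eqref{id1} and \eqref{id4} these let me rewrite $(x\cdot y)\vtl z$ and $x\vtr(y\cdot z)$ and check that they coincide.

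In practice I would expand $(x\cdot y)\vtl z = (x\vtr y)\vtl z + (x\vtl y)\vtl z$ and $x\vtr(y\cdot z) = x\vtr(y\vtr z) + x\vtr(y\vtl z)$, then apply \eqref{id1} to identify $(x\vtr y)\vtl z = x\vtr(y\vtl z)$, which cancels one term against its counterpart on the other side, and apply \eqref{id4} to identify $(x\vtl y)\vtl z = x\vtr(y\vtr z)$, which cancels the remaining pair. After these substitutions both sides become term-by-term equal, completing the verification. The whole argument is a bookkeeping exercise in substituting the seven axioms, so no single step is deep; the main obstacle is purely organizational, namely keeping careful track of signs and pairing up the correct terms so that every summand on one side is matched by an equal summand on the other. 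The cleanest presentation is to split the associativity identity into the two halves $(x\cdot y)\vtr z = x\vtl(y\cdot z)$ (which is exactly \eqref{id5}) and $(x\cdot y)\vtl z = x\vtr(y\cdot z)$ (derived from \eqref{id1} and \eqref{id4}), and to add them.
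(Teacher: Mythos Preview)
Your argument is correct: the associativity of $\cdot$ follows by splitting $(x\cdot y)\cdot z - x\cdot(y\cdot z)$ into the two pieces $(x\cdot y)\vtr z - x\vtl(y\cdot z)$ and $(x\cdot y)\vtl z - x\vtr(y\cdot z)$, the first vanishing by \eqref{id5} and the second by combining \eqref{id1} and \eqref{id4}. Note that the paper does not supply its own proof of this proposition but simply cites \cite{DGC}; your direct verification is exactly the standard one and uses only three of the seven axioms (indeed \eqref{id5}, \eqref{id6}, \eqref{id7} are consequences of \eqref{id2}, \eqref{id3}, \eqref{id4}, so the detour through \eqref{id6} and \eqref{id7} you sketch in the middle paragraph is unnecessary).
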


Recall that an associative algebra $(A, \cdot)$ is 2-nilpotent if $(x\cdot y)\cdot z = x\cdot (y\cdot z) = 0$ for all $x, y, z \in A$. Similarly, an anti-dendriform algebra $(A,\vtr,\vtl)$ is 2-nilpotent if $(x\ast_{i_1} y)\ast_{i_2} z=x\ast_{i_3} (y\ast_{i_4} z)=0$ for all $i_1, i_2, i_3, i_4\in\{\vtr,\vtl\}$ and $x,y,z\in A$. Using equations  (\ref{id1})-(\ref{id7}), it was  shown in \cite{aans25}  that the anti-dendriform algebras associated with the associative abelian algebra are 2-nilpotent.

The following sets are called the center of associative and anti-dendriform algebras, respectively: $${\rm
Z}_{As}(A)=\{x\in A~|~x\cdot y=y\cdot x=0,\ \forall y\in A\},$$
$${\rm
Z}_{AD}(A)=\{x\in A~|~x\vtr y=x\vtl y=y\vtr x=y\vtl x=0,\ \forall y\in A\}.$$

An \textit{ideal} $I$ of an anti-dendriform algebra $A$ is a subalgebra of the algebra $A$ that satisfies the conditions:
$$x\vtr y, \ x\vtl y, \ y\vtr x, \ y\vtl x \in I, \ \ \mbox{for all} \ \  x\in A,\  y\in I.$$
It is obvious that center of an arbitrary anti-dendriform algebra is an ideal.

\begin{prop}[\cite{aans25}]\label{compatible}
Let $(A,\cdot)$ be an associative algebra and let $(A, \vtr, \vtl)$ be a compatible anti-dendriform algebra structure on $(A,\cdot)$. If the center $(Z(A), \cdot)$ of $(A, \cdot)$ is the center $(Z(A), \vtr, \vtl)$ of $(A, \vtr, \vtl)$, then the quotient $(A/Z(A), \vtr, \vtl)$ is a compatible anti-dendriform algebra structure on $(A/Z(A), \cdot)$.
\end{prop}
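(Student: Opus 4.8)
The plan is to verify that the common center $Z(A)=Z_{As}(A)=Z_{AD}(A)$ is a two-sided ideal for all three operations and that the canonical projection $\pi\colon A\to A/Z(A)$ is simultaneously a homomorphism with respect to $\vtr$, $\vtl$, and $\cdot$; the anti-dendriform axioms and the compatibility with the associated associative product then descend automatically. First I would record the key consequence of the hypothesis. By definition of $Z_{AD}(A)$, every $z\in Z(A)$ satisfies $z\vtr y=z\vtl y=y\vtr z=y\vtl z=0$ for all $y\in A$; summing the relevant pairs via \eqref{cdot} gives $z\cdot y=y\cdot z=0$ as well, so $Z(A)$ in fact annihilates $A$ under each of $\vtr$, $\vtl$, and $\cdot$. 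In particular $Z(A)$ is an ideal of both $(A,\cdot)$ and $(A,\vtr,\vtl)$, and the quotient vector space $A/Z(A)$ is defined.

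Next I would define the induced operations on $A/Z(A)$ by $\bar{x}\vtr\bar{y}:=\overline{x\vtr y}$ and $\bar{x}\vtl\bar{y}:=\overline{x\vtl y}$, where $\bar{x}=\pi(x)$, and check they are well defined. If $x\equiv x'$ and $y\equiv y'\pmod{Z(A)}$, then
\[
x\vtr y-x'\vtr y'=(x-x')\vtr y+x'\vtr(y-y'),
\]
and since $x-x',\,y-y'\in Z(A)$ annihilate $A$, both summands vanish; the same computation works for $\vtl$. Hence $\pi$ is a homomorphism for $\vtr$ and $\vtl$, and therefore for $\cdot$ as well, because $\bar{x}\cdot\bar{y}=\overline{x\cdot y}=\overline{x\vtr y+x\vtl y}=\bar{x}\vtr\bar{y}+\bar{x}\vtl\bar{y}$. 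This last equality is precisely the statement that the associative product induced on $A/Z(A)$ is the associated associative product of $(A/Z(A),\vtr,\vtl)$ in the sense of \eqref{cdot}.

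Finally I would transfer the defining identities. Each of \eqref{id1}--\eqref{id7} is an equality between two expressions built from $\vtr$, $\vtl$, and $\cdot$; applying the homomorphism $\pi$ to such an equality, which holds in $A$, yields the corresponding equality in $A/Z(A)$ for the barred operations. For instance \eqref{id2} becomes $\bar{x}\vtr(\bar{y}\vtr\bar{z})=-(\bar{x}\cdot\bar{y})\vtr\bar{z}$, and likewise for the remaining six. Thus $(A/Z(A),\vtr,\vtl)$ satisfies all the anti-dendriform axioms and its associated associative algebra is $(A/Z(A),\cdot)$, which is the claim.

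I expect no serious obstacle here: the argument is essentially the universal property of quotients applied simultaneously to three compatible operations. The only substantive point is the observation that $Z_{AD}(A)\subseteq Z_{As}(A)$ always holds, so the hypothesis $Z_{As}(A)=Z_{AD}(A)$ is exactly what forces elements of the associative center to annihilate $A$ under $\vtr$ and $\vtl$ too; this is the annihilation property that makes the induced operations well defined and that would otherwise fail.
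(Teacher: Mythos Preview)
Your argument is correct and is the standard verification: the hypothesis guarantees $Z(A)$ annihilates $A$ under both $\vtr$ and $\vtl$, so the induced operations on the quotient are well defined, $\pi$ is a simultaneous homomorphism, and the identities \eqref{id1}--\eqref{id7} descend. The paper does not prove this proposition at all---it is quoted from \cite{aans25}---so there is nothing here to compare your approach against.
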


From now we use the following notations: $As_n^q$ and $AD_n^q$ denote $n$-dimensional associative and anti-dendriform algebra structures associated with nilpotent associative algebras, where $q$ is an index for numbering.

\begin{prop}[\cite{DGC}]\label{idempotent}
Let $(A, \cdot)$ be an associative algebra with a non-zero idempotent $e$, that is, $e \cdot e= e$. Then there does not exist a compatible anti-dendriform algebra structure on $(A, \cdot)$.
\end{prop}

It is known that any finite-dimensional associative algebra without a non-zero idempotent element is nilpotent. Therefore we have the following conclusion.

\begin{cor}\label{cor_2.5}
The associated associative algebra of any anti-dendriform algebra is nilpotent.
\end{cor}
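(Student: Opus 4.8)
The plan is to derive Corollary~\ref{cor_2.5} directly from Proposition~\ref{idempotent} together with the standard structure-theoretic fact stated immediately before the corollary, namely that a finite-dimensional associative algebra with no non-zero idempotent is nilpotent. Let $(A,\vtr,\vtl)$ be an arbitrary anti-dendriform algebra and let $(A,\cdot)$ be its associated associative algebra, which exists and is well-defined by Proposition~\ref{prop_2.2}. I want to conclude that $(A,\cdot)$ is nilpotent.

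The first step is to argue by contraposition against the cited nilpotency criterion. If $(A,\cdot)$ were \emph{not} nilpotent, then by the quoted structural theorem it would have to contain a non-zero idempotent element $e$ with $e\cdot e=e$. The second step is to invoke Proposition~\ref{idempotent}: since $(A,\vtr,\vtl)$ is an anti-dendriform algebra, it is by definition (via Proposition~\ref{prop_2.2}) a compatible anti-dendriform algebra structure on its associated associative algebra $(A,\cdot)$. But Proposition~\ref{idempotent} asserts that no associative algebra possessing a non-zero idempotent admits any compatible anti-dendriform structure. This directly contradicts the existence of $(A,\vtr,\vtl)$ on $(A,\cdot)$.

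Therefore the assumption that $(A,\cdot)$ is non-nilpotent is untenable, and $(A,\cdot)$ must be nilpotent, which is exactly the claim. The whole argument is essentially a one-line syllogism once the two ingredients are in place, so there is no lengthy computation to carry out.

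I expect there to be no genuine mathematical obstacle here; the only point requiring care is the appeal to the external fact that a finite-dimensional associative algebra without non-zero idempotents is nilpotent. This is a well-known result (it follows, for instance, from Wedderburn theory, since a non-nilpotent finite-dimensional algebra has a non-zero semisimple quotient and hence a non-zero idempotent which lifts modulo the radical), and the excerpt explicitly states it as known, so I would simply cite it rather than reprove it. Everything else is a direct application of the two preceding propositions.
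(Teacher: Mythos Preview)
Your argument is correct and matches the paper's own reasoning exactly: the paper derives the corollary in one line from Proposition~\ref{idempotent} together with the stated fact that a finite-dimensional associative algebra without non-zero idempotents is nilpotent. There is nothing to add or change.
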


For an algebra $A$ of an arbitrary variety, we consider the series
$$A^1=A, \ \ A^{i+1}=\sum\limits_{k=1}^{i}A^kA^{i+1-k}, \ \ i\geq1.$$
We say that an algebra $A$ is \textit{nilpotent} if $A^i =\{0\}$ for some $i\in \mathbb{N}.$ The smallest integer satisfying $A^i = \{0\}$
is called the \textit{index of nilpotency} of $A.$ Obviously, for an $n$-dimensional nilpotent algebra $A$ we have $A^{n+1} = \{0\}$, i.e. the index of nilpotency of an $n$-dimensional nilpotent algebra is not greater than $n + 1$. An $n$-dimensional algebra $A$ is called \textit{maximum nilpotent} if the  nilpotency index of the algebra is equal to $n+1$.

By Proposition \ref{idempotent}, in order to classify arbitrary three-dimensional anti-dendriform algebras over a complex number field, we present a classification of all complex three-dimensional nilpotent associative algebras.

\begin{thm}[\cite{Kobayashi}]\label{assdim3}
Any three-dimensional complex  nilpotent associative algebra is
isomorphic to one of the following pairwise non-isomorphic
associative algebras:

$As_3^1:\ Abelian;$

$As_3^2:\ e_1e_2=e_3, \ e_2e_1=-e_3;$

$As_3^3:\ e_1e_1=e_3;$

$As_3^4:\ e_1e_2=e_3;$

$As_3^5(\lambda):\ e_1e_1=e_3, \ e_1e_2=\lambda e_3, \ e_2e_2=e_3, \ \lambda\in \mathbb{C};$

$As_3^6:\ e_1e_1=e_2, \ e_1e_2=e_3, \ e_2e_1=e_3.$

\end{thm}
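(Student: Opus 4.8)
The plan is to classify by the dimension of the square $A^2=A\cdot A$, using the standard fact that for a nilpotent associative algebra the chain of powers $A\supsetneq A^2\supsetneq A^3\supsetneq\cdots$ is strictly decreasing until it reaches $0$: if $A^m=A^{m+1}\neq0$, then $A^m=A\cdot A^m=A^2\cdot A^m=\cdots=A^{j}\cdot A^{m}=A^{j+m}$ for all $j$, contradicting $A^N=0$. In particular $A\neq A^2$ forces $\dim A^2\le2$, so I would split into the cases $\dim A^2\in\{0,1,2\}$. The case $\dim A^2=0$ is immediate: the product vanishes identically and $A\cong As_3^1$.

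For $\dim A^2=1$ the strict decrease gives $A^3\subsetneq A^2$, hence $A^3=0$. Fixing a generator $e_3$ of the one-dimensional space $A^2$ and setting $V=A/A^2$ (two-dimensional), the entire multiplication is recorded by the induced bilinear map $\beta\colon V\times V\to A^2\cong\mathbb{C}$, and associativity is automatic since every triple product lies in $A\cdot A^2+A^2\cdot A=A^3=0$. Thus the classification reduces to that of nonzero bilinear forms on $\mathbb{C}^2$ modulo the equivalence $\beta\sim\mu\,\beta\circ(g\times g)$ for $g\in GL(V)$ and $\mu\in\mathbb{C}^{\ast}$ (basis change on $V$ together with rescaling $e_3$). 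Representing $\beta$ by a matrix $B$ and splitting off symmetric and skew parts, I would enumerate the congruence classes: the nondegenerate skew form yields $As_3^2$; the symmetric rank-one form yields $As_3^3$; the rank-one form with distinct left and right radicals yields $As_3^4$; and the remaining nondegenerate forms yield the family $As_3^5(\lambda)$.

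For $\dim A^2=2$ I would first exclude $A^3=0$: then $V=A/A^2$ is one-dimensional with a single generator $e_1$, all products meeting $A^2$ land in $A^3=0$, so $A^2=\langle e_1e_1\rangle$ is at most one-dimensional --- a contradiction. Hence $\dim A^3=1$ and $A^4=0$, so $A$ is maximum nilpotent and singly generated. Putting $e_2=e_1e_1$ and $e_3=e_1e_2$, nilpotency and associativity force the full table ($e_1e_1=e_2$, $e_1e_2=e_2e_1=e_3$, all other products zero), identifying $A$ with the truncated polynomial algebra $t\,\mathbb{C}[t]/(t^4)$, i.e. $A\cong As_3^6$; uniqueness up to rescaling $e_1$ is routine.

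The main obstacle is the nondegenerate subcase of $\dim A^2=1$, where one must both produce the parameter $\lambda$ and control the resulting coincidences. The clean invariant here is the conjugacy class of the cosquare $C=B^{-1}B^{\top}$: it is unaffected by the rescaling $B\mapsto\mu B$ and transforms by conjugation $C\mapsto g^{-1}Cg$ under $B\mapsto g^{\top}Bg$, and one checks $\det C=1$ so that $C$ is essentially pinned down by $\operatorname{tr}C$. For $As_3^5(\lambda)$ a direct computation gives $\operatorname{tr}C=2-\lambda^2$, which both generates the one-parameter family and shows the only isomorphisms inside it are $As_3^5(\lambda)\cong As_3^5(-\lambda)$, with $\lambda=0$ recovering the nondegenerate symmetric form (while the skew case $C=-I$ stays separate as $As_3^2$, being distinguished from $\lambda=\pm2$ by its Jordan type). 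Pairwise non-isomorphism across all cases then follows by comparing the discrete invariants $\dim A^2$, $\dim A^3$, and the rank and symmetry type of $\beta$, refined by this trace invariant within the $As_3^5$ family.
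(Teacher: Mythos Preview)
The paper does not prove this theorem at all: it is quoted from \cite{Kobayashi} and used as input for the subsequent classification, so there is no ``paper's own proof'' to compare against. Your argument must therefore be judged on its own merits.

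Your outline is correct and is essentially the standard one. The filtration by $\dim A^2$ is the right organizing principle, and your handling of the extreme cases $\dim A^2=0$ and $\dim A^2=2$ is clean; in particular the exclusion of $A^3=0$ when $\dim A^2=2$ via the factorization of the product through $A/A^2\times A/A^2$ is exactly right. In the $\dim A^2=1$ case your reduction to congruence classes of nonzero bilinear forms on $\mathbb{C}^2$ is also correct, and the cosquare $C=B^{-1}B^{\top}$ is the right invariant for the nondegenerate subcase. One point you state but do not argue is why every nondegenerate non-skew form is congruent to $\left(\begin{smallmatrix}1&\lambda\\0&1\end{smallmatrix}\right)$; this is easy (pick $e_1$ with $\beta(e_1,e_1)\neq0$, then $e_2$ in the left radical of $e_1$, and rescale), but it should be said explicitly since the cosquare by itself only \emph{separates} forms and does not immediately \emph{produce} normal forms.

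Your computation $\operatorname{tr}C=2-\lambda^2$ and the explicit isomorphism $e_2\mapsto -e_2$ together show that $As_3^5(\lambda)\cong As_3^5(-\lambda)$ and that no further collapses occur. This is correct, and it means the phrase ``pairwise non-isomorphic'' in the statement is literally true only once one restricts $\lambda$ to a fundamental domain for $\lambda\mapsto-\lambda$; you might flag this explicitly. Your separation of $As_3^2$ from $As_3^5(\pm2)$ by the Jordan type of $C$ (scalar $-I$ versus a nontrivial Jordan block at $-1$) is the right way to finish.
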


Now we formulate the classification of all three-dimensional anti-dendriform algebras corresponding to these three-dimensional associative algebras.

\begin{thm}[\cite{aans25}]  Any three-dimensional complex anti-dendriform algebra is isomorphic to one of the following pairwise non-isomorphic algebras:

$AD_3^1: \    e_1\vtr e_1=\frac12e_2, \ e_1\vtl e_1=\frac12e_2,\ e_1\vtr e_2=e_2\vtl e_1=2e_3,\ e_2\vtr e_1=e_1\vtl e_2=-e_3;$

$AD_3^2: \ e_1\vtr e_1=\frac12e_2+e_3,\ e_1\vtl e_1=\frac12e_2-e_3,\ e_1\vtr e_2=e_2\vtl e_1=2e_3, \ e_2\vtr e_1=e_1\vtl e_2=-e_3;$

$AD_3^3: \ \mbox{is trivial, that is, all products are zero;}$

$AD_3^4: \ e_1\vtr e_2=e_3, \ e_2\vtr e_1=-e_3,\ e_1\vtl e_2=-e_3, \ e_2\vtl e_1=e_3;$

$AD_3^5: \ e_1\vtr e_1=e_3,\ e_1\vtl e_1=-e_3;$

$AD_3^6: \ e_1\vtr e_2=e_3, \ e_1\vtl e_2=-e_3;$

$AD_3^7(\lambda): \ \begin{cases}
e_1\vtr e_1=e_3,\ e_1\vtr e_2=\lambda e_3, \ e_2\vtr e_2=e_3,\\
e_1\vtl e_1=-e_3,\ e_1\vtl e_2=-\lambda e_3, \ e_2\vtl e_2=-e_3;
\end{cases}
$

$AD_3^8(\alpha,\beta): \ \begin{cases}
e_1\vtr e_1=e_3, \ e_1\vtr e_2=\alpha e_3, \
e_2\vtr e_1=\beta e_3,\\[1mm]
e_1\vtl e_1=-e_3, \  e_1\vtl e_2=(1-\alpha)e_3, \
e_2\vtl e_1=(-1-\beta)e_3;
\end{cases}$

$AD_3^9(\alpha): \ e_1\vtr e_2=\alpha e_3, \ e_2\vtr e_1=-\alpha e_3, \ e_1\vtl e_2=(1-\alpha)e_3, \ e_2\vtl e_1=(-1+\alpha)e_3;
$

$AD_3^{10}: \ e_1\vtr e_1=e_2, \ e_2\vtr e_1=-e_3, \ e_1\vtl e_1=-e_2, \ e_1\vtl e_2=e_3;$

$AD_3^{11}(\alpha,\beta): \
e_1\vtr e_2=\alpha e_3,\
e_2\vtr e_1=\beta e_3,\
e_1\vtl e_2=(1-\alpha)e_3,\
e_2\vtl e_1=-\beta e_3;
$

$AD_3^{12}(\alpha,\beta): \ \begin{cases}
e_1\vtr e_2=\alpha e_3,\
e_2\vtr e_1=\beta e_3,\
e_2\vtr e_2=e_3,\\
e_1\vtl e_2=(1-\alpha)e_3,\
 e_2\vtl e_1=-\beta e_3,\
e_2\vtl e_2=-e_3;
\end{cases}$

$ AD_3^{13}(\alpha,\beta,\gamma): \ \begin{cases}
e_1\vtr e_1=e_3, \
e_1\vtr e_2=\alpha e_3,\
e_2\vtr e_1=\beta e_3,\
e_2\vtr e_2=\gamma e_3,\\
e_1\vtl e_1=-e_3,\
e_1\vtl e_2=(1-\alpha)e_3,\
e_2\vtl e_1=-\beta e_3,\
e_2\vtl e_2=-\gamma e_3;
\end{cases}$

$AD_3^{14}:\ e_1\vtr e_1=e_2, \ e_1\vtl e_1=-e_2, \ e_1\vtl e_2=e_3;$

$AD_3^{15}(\alpha,\beta,\gamma,\lambda):\ \begin{cases}
e_1\vtr e_1=\alpha e_3,\
e_1\vtr e_2=\beta e_3,\
e_2\vtr e_1=\gamma e_3, \  \alpha\neq 0,\\
e_1\vtl e_1=(1-\alpha)e_3,\
e_1\vtl e_2=(\lambda-\beta)e_3,\
e_2\vtl e_1=-\gamma e_3,\
e_2\vtl e_2=e_3;
\end{cases}$

$AD_3^{16}(\alpha, \lambda):\ \begin{cases}
e_1\vtr e_2=\alpha e_3,\
e_2\vtr e_1=-\alpha e_3,\\
e_1\vtl e_1=e_3,\
e_1\vtl e_2=(\lambda-\alpha)e_3,\
e_2\vtl e_1=-\alpha e_3,\
e_2\vtl e_2=e_3;
\end{cases}$

$AD_3^{17}(\lambda): \ e_1\vtr e_1=e_2,\
e_1\vtl e_1=-e_2+e_3,\
e_1\vtl e_2=\lambda e_3,\ e_2\vtl e_2=e_3;
$

$AD_3^{18}(\alpha): \ e_1\vtr e_1=\alpha e_3, \ e_1\vtl e_1=(1-\alpha)e_3;$

$AD_3^{19}: \ e_2\vtr e_1=e_3,\ e_1\vtl e_1=e_3, \ e_2\vtl e_1=-e_3;$

$AD_3^{20}(\alpha): \ \begin{cases}
e_1\vtr e_1=\alpha e_3, \ e_1\vtr e_2=e_3, \ e_2\vtr e_1=-e_3,\\
e_1\vtl e_1=(1-\alpha)e_3, \
e_1\vtl e_2=-e_3, \
e_2\vtl e_1=e_3;
\end{cases}$

$AD_3^{21}(\alpha):\  \begin{cases}
e_1\vtr e_2=e_3, \ e_2\vtr e_1=\alpha e_3,\\
e_1\vtl e_1=e_3, \
e_1\vtl e_2=-e_3, \
e_2\vtl e_1=-\alpha e_3, \ \alpha\neq-1;
\end{cases}$

$AD_3^{22}(\alpha,\beta): \ \begin{cases}
e_1\vtr e_1=\alpha e_3,  \ e_2\vtr e_1=\beta e_3, \  e_2\vtr e_2=e_3,\\
e_1\vtl e_1=(1-\alpha)e_3, \
e_2\vtl e_1=-\beta e_3, \
e_2\vtl e_2=-e_3;
\end{cases}$

$AD_3^{23}:\ e_1\vtr e_1=e_2,\ e_1\vtl e_1=-e_2+e_3;$\\
where  $\alpha, \beta, \gamma, \lambda\in \mathbb{C}$ and $AD_3^8(\alpha,\beta)\cong AD_3^8(-\beta,-\alpha);$ $AD_3^{15}(\alpha,\beta,\gamma,0)\cong AD_3^{15}(\alpha,-\beta,-\gamma,0);$ $AD_3^{21}(-1)\cong AD_3^{20}(0).$
\end{thm}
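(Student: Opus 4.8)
The plan is to reduce the problem to a finite case analysis governed by the classification of three-dimensional nilpotent associative algebras. By Proposition~\ref{prop_2.2} every three-dimensional anti-dendriform algebra $(A,\vtr,\vtl)$ carries an associated associative algebra $(A,\cdot)$, and by Corollary~\ref{cor_2.5} this associative algebra is nilpotent. Hence $(A,\cdot)$ is isomorphic to one of the six algebras $As_3^1,\dots,As_3^6$ of Theorem~\ref{assdim3}. The task therefore splits into six subproblems: for each associative structure $\cdot$ in that list, determine all \emph{compatible} anti-dendriform structures, i.e. all pairs $(\vtr,\vtl)$ satisfying \eqref{id1}--\eqref{id7} with $x\vtr y + x\vtl y = x\cdot y$. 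The crucial simplification is that \eqref{cdot} forces $\vtl = {\cdot} - {\vtr}$, so a compatible structure is completely determined by the single operation $\vtr$; it suffices to find all admissible ``left halves'' $\vtr$ of the fixed product $\cdot$. The abelian case $As_3^1$ can be isolated at once, since the anti-dendriform algebras over the abelian associative algebra are $2$-nilpotent by the result of \cite{aans25} quoted above, which cuts the search there to $2$-nilpotent splittings.

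Next I would fix a basis $\{e_1,e_2,e_3\}$ realizing each $As_3^q$ and write $e_i \vtr e_j = \sum_k c_{ij}^k e_k$ with unknown structure constants, setting $e_i\vtl e_j = e_i\cdot e_j - e_i\vtr e_j$. Substituting these expressions into the seven identities \eqref{id1}--\eqref{id7} produces a system of polynomial (after the dust settles, largely linear and quadratic) equations in the $c_{ij}^k$. Here nilpotency does most of the work: for $As_3^2$--$As_3^5$ one has $A^3=0$ and $A^2=\langle e_3\rangle$, while $As_3^6$ is maximally nilpotent with $A^3=\langle e_3\rangle$ and $A^4=0$. Because all iterated products land in the top of the nilpotent filtration, the vast majority of the triple-product terms appearing in \eqref{id2}--\eqref{id7} vanish automatically, collapsing each identity to a short relation among the surviving constants. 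Solving these relations case by case yields, for every associative algebra in the list, a finite family of compatible structures parametrized by the remaining free constants; these are the raw forms behind $AD_3^1,\dots,AD_3^{23}$.

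The genuinely laborious part, and the main obstacle, is the reduction up to isomorphism. An isomorphism of anti-dendriform algebras is a linear bijection preserving both $\vtr$ and $\vtl$ simultaneously (equivalently, preserving $\vtr$ and $\cdot$). For each associative algebra I would compute the group of admissible base changes, namely those $\phi$ fixing the isomorphism type of $(A,\cdot)$ together with the identifications between distinct $As_3^q$ sharing a center, and let this group act on the solution families found above. Normalizing the free parameters under this action is what removes redundancy, merges solutions arising from different associative presentations, and pins down the discrete invariants separating genuinely non-isomorphic algebras. This step is where the scalar parameters $\alpha,\beta,\gamma,\lambda$ and the nondegeneracy conditions (such as $\alpha\neq0$ in $AD_3^{15}$ or $\alpha\neq-1$ in $AD_3^{21}$) are forced, and where care is needed to avoid over- or under-counting.

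Finally, I would record the residual identifications surviving within a single family: exhibiting explicit change-of-basis matrices establishing $AD_3^8(\alpha,\beta)\cong AD_3^8(-\beta,-\alpha)$, $AD_3^{15}(\alpha,\beta,\gamma,0)\cong AD_3^{15}(\alpha,-\beta,-\gamma,0)$, and $AD_3^{21}(-1)\cong AD_3^{20}(0)$, and verifying that no further coincidences occur. Pairwise non-isomorphism of the remaining normal forms would be confirmed by comparing discrete invariants, such as $\dim {\rm Z}_{AD}(A)$, the dimensions of the product spaces $A\vtr A$ and $A\vtl A$, and the isomorphism type of the associated $(A,\cdot)$, so that most candidate isomorphisms are excluded on dimensional grounds before any direct matrix computation is attempted.
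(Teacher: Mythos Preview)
The paper does not prove this theorem; it is quoted verbatim as a result of \cite{aans25} and used as input for the four-dimensional classification. Your outline is nonetheless the correct strategy and matches exactly the method that \cite{aans25} employs (and that the present paper reuses in its four-dimensional theorems): fix the associated nilpotent associative algebra among $As_3^1,\dots,As_3^6$, parametrize the compatible $\vtr$ via structure constants, impose \eqref{id1}--\eqref{id7}, and then normalize under the automorphism group of the underlying associative algebra.
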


In this theorem, it can be seen the following:

\begin{itemize}
  \item the anti-dendriform algebras $AD_3^1$ and $AD_3^2$
 are compatible anti-dendriform algebra structures on $As_3^6;$
  \item the anti-dendriform algebras $AD_3^3-AD_3^7(\lambda)$
 are compatible anti-dendriform algebra structures on $As_3^1;$
  \item the anti-dendriform algebras $AD_3^8(\alpha,\beta)-AD_3^{10}$
 are compatible anti-dendriform algebra structures on $As_3^2;$
  \item the anti-dendriform algebras $AD_3^{11}(\alpha, \beta)-AD_3^{14}$
 are compatible anti-dendriform algebra structures on $As_3^4;$
  \item the anti-dendriform algebras $AD_3^{15}(\alpha, \beta,\gamma,\lambda)-AD_3^{17}(\lambda)$
 are compatible anti-dendriform algebra structures on $As_3^5(\lambda);$
  \item the anti-dendriform algebras $AD_3^{18}(\alpha)-AD_3^{23}$
 are compatible anti-dendriform algebra structures on $As_3^3.$
\end{itemize}

In order to classify arbitrary four-dimensional antidendriform algebras over the field of complex numbers we present a classification of all complex four-dimensional nilpotent associative algebras. An algebra is called decomposable if it is equal to a direct sum of two subalgebras such that each of them have a positive dimension. If an associative algebra is nilpotent and decomposable, then its center has dimension more than one. 

\begin{thm}(\cite{BRR}, see also~\cite{RRB_2009})\label{table}
Any four-dimensional complex nilpotent indecomposible associative algebra can be included in one of the following isomorphism classes of algebras:

\begin{footnotesize}
\begin{tabular}{|l|p{1.5in}|p{2.9in}|r|} \hline
\textit{Algebra}& \textit{Table of multiplication}& \textit{Automorphisms}& \textit{Center} \\
\hline
$As_{4}^{2}$ &$e_{1}e_{2}=e_{3},$ $e_{2}e_{1}=e_{4}$ & $\varphi_1(e_1)=ae_1+be_3+de_4,$ $\varphi_1(e_2)=ae_2+ce_3+ee_4,$ \newline $\varphi_1(e_3)=a^2e_3,$ $\varphi_1(e_4)=a^2e_4,$\newline $\varphi_2(e_1)=be_2+ce_3+ce_4,$ $\varphi_2(e_2)=ae_1+de_3+fe_4,$ \newline $\varphi_2(e_3)=abe_4,$ $\varphi_2(e_4)=abe_3,$ &$\langle e_3,e_4\rangle$\\
\hline
$As_{4}^{3}$ &$e_{1}e_{2}=e_{4},$ $e_{3}e_{1}=e_{4}$ & $\varphi(e_1)=ae_1+ce_2-ce_3+de_4,$ $\varphi(e_2)=be_2+ee_4,$ \newline $\varphi(e_3)=be_3+fe_4,$ $\varphi(e_4)=abe_4,$ &$\langle e_4\rangle$ \\ \hline
$As_{4}^{4}$ & $e_1e_2=e_3$, $e_2e_1=e_4$, \newline$e_2e_2=-e_3$ & $\varphi(e_1)=ae_1+be_3+de_4,$ $\varphi(e_2)=ae_2+ce_3+ee_4,$ \newline $\varphi(e_3)=a^2e_3,$ $\varphi(e_4)=a^2e_4,$&$\langle e_3,e_4\rangle$ \\ \hline
$As_{4}^{5}$& $e_1e_2=e_3,$ $e_2e_2=e_4$, \newline $e_2e_1=-e_3$ & $\varphi(e_1)=ae_1+de_3+fe_4,$ $\varphi(e_2)=ce_1+be_2+ee_3+ge_4,$ \newline $\varphi(e_3)=abe_3,$ $\varphi(e_4)=b^2e_4,$ &$\langle e_3,e_4\rangle$\\
\hline
$As_{4}^{6}$& $e_1e_2=e_4,$ $e_3e_3=e_4,$ \newline $e_2e_1=-e_4$ &$\varphi_1(e_1)=ae_1+ce_2+de_4,$ $\varphi_1(e_2)=-\frac{b^2}{c}e_1+ee_4,$ \newline $\varphi_1(e_3)=be_3+fe_4,$ $\varphi_1(e_4)=b^2e_4$ \newline
 $\varphi_2(e_1)=\frac{d^2+ab}{c}e_1+be_2+ee_4,$ $\varphi_2(e_2)=ae_1+ce_2+fe_4,$ \newline $\varphi_2(e_3)=de_3+he_4,$ $\varphi_2(e_4)=d^2e_4,$&$\langle e_4\rangle$\\ \hline
$As_{4}^{7}(\alpha)$ &$e_1e_2=e_4$, $e_2e_2=e_3,$ \newline $e_2e_1=\frac{1+\alpha}{1-\alpha}e_4$, $\alpha\neq 1$ & $\varphi(e_1)=ae_1+de_3+fe_4,$ $\varphi(e_2)=ce_1+be_2+ee_3+ge_4,$ \newline $\varphi(e_3)=b^2e_3+bc(1+\alpha)e_4,$ $\varphi(e_4)=abe_4,$ &$\langle e_3,e_4\rangle$\\\hline
$As_{4}^{8}$ & $e_1e_1=e_3,$ $e_1e_3=e_4,$ \newline $e_2e_2=-e_4,$ $e_3e_1=e_4$ & $\varphi(e_1)=\sqrt[3]{a^2}e_1+be_2+ce_3+de_4,$ \newline $\varphi(e_2)=ae_2+b\sqrt[3]{a}e_3+ee_4,$ \newline $\varphi(e_3)=\sqrt[3]{a^4}e_3+(2c\sqrt[3]{a^2}-b^2)e_4,$ $\varphi(e_4)=a^2e_4,$ &$\langle e_4\rangle$\\ \hline
$As_{4}^{9}$ & $e_1e_1=e_3,$ $e_2e_1=e_4,$ \newline $e_1e_3=-e_4,$ $e_3e_1=-e_4$ & $\varphi(e_1)=ae_1+be_2+ce_3+ee_4,$ $\varphi(e_2)=a^2e_2+de_4,$\newline $\varphi(e_3)=a^2e_3+a(b-2c)e_4,$ $\varphi(e_4)=a^3e_4,$ &$\langle e_4\rangle$\\ \hline
$As_{4}^{10}$ & $e_1e_1=e_4,$  $e_1e_2=e_4,$ \newline $e_2e_1=-e_4,$  $e_3e_3=e_4$ & $\varphi_1(e_1)=ae_1+be_4,$ $\varphi_1(e_2)=ae_2+ce_4,$ \newline $\varphi_1(e_3)=ae_3+de_4,$ $\varphi_1(e_4)=a^2e_4,$\newline $\varphi_2(e_1)=ae_1+be_2+de_4,$ $\varphi_2(e_2)=\frac{c^2}{a}e_2+ee_4,$ \newline $\varphi_2(e_3)=ce_3+fe_4,$ $\varphi_2(e_4)=c^2e_4,$ &$\langle e_4\rangle$\\ \hline
$As_{4}^{11}$& $e_1e_1=e_4,$  $e_1e_2=e_3,$ \newline $e_2e_1=-e_3,$ $e_2e_2=-2e_3+e_4$ & $\varphi_1(e_1)=ae_1+be_3+de_4,$ $\varphi_1(e_2)=ae_2+ce_3+ee_4,$ \newline $\varphi_1(e_3)=a^2e_3,$ $\varphi_1(e_4)=a^2e_4,$\newline $\varphi_2(e_1)=ae_2+be_3+de_4,$ $\varphi_2(e_2)=ae_1+ce_3+ee_4,$ \newline $\varphi_2(e_3)=-a^2e_3,$ $\varphi_2(e_4)=-2a^2e_3+a^2e_4,$ &$\langle e_3,e_4\rangle$\\ \hline
$As_{4}^{12}(\alpha)$& $e_1e_1=e_4,$  $e_1e_2=e_3,$ \newline $e_2e_1=-\alpha e_4,$  $e_2e_2=-e_3$ & $\varphi_1(e_1)=ae_1+be_3+de_4,$ $\varphi_1(e_2)=ae_2+ce_3+ee_4,$ \newline $\varphi_1(e_3)=a^2e_3,$ $\varphi_1(e_4)=a^2e_4,$\newline $\varphi_2(e_1)=ae_2+be_3+de_4,$ $\varphi_2(e_2)=ae_1+ce_3+ee_4,$ \newline $\varphi_2(e_3)=-a^2e_3,$ $\varphi_2(e_4)=-2a^2e_3+a^2e_4,$  &$\langle e_3,e_4\rangle$\\ \hline
$As_{4}^{13}$ & $ e_1e_1=e_3,$ $e_1e_3=-e_4,$ \newline $e_2e_1=e_4,$  $e_2e_2=e_4,$ \newline $e_3e_1=-e_4$ & $\varphi(e_1)=e_1+ae_2+be_3+ce_4,$ $\varphi(e_2)=e_2+ae_3+de_4,$\newline $\varphi(e_3)=e_3+(a^2+a-2b)e_4,$ $\varphi(e_4)=e_4,$ &$\langle e_4\rangle$\\ \hline
$As_{4}^{14}$ & $e_1e_2=e_4,$  $e_1e_3=e_4,$ \newline $e_2e_1=-e_4,$   $e_2e_2=e_4,$\newline  $e_3e_1=e_4$ & $\varphi(e_1)=ae_1+be_2-\frac{b^2}{2a}e_3+ce_4,$ $ \varphi(e_2)=ae_2-be_3+de_4,$\newline $\varphi(e_3)=ae_3+ee_4,$ $\varphi(e_4)=a^2e_4,$ &$\langle e_4\rangle$\\\hline
$As_{4}^{15}(\alpha)$ & $e_1e_1=e_4,$  $e_1e_2=\alpha e_4,$ \newline $e_2e_1=-\alpha e_4,$  $e_2e_2=e_4,$\newline $e_3e_3=e_4$ & $\varphi(e_1)=ae_1+be_2+ce_3+de_4,$ $ \varphi(e_2)=ee_1+fe_2+ge_3+he_4,$\newline $\varphi(e_3)=ie_1+je_2+ke_3+le_4,$ $\varphi(e_4)=(af-be)e_4,$ &$\langle e_4\rangle$\\ \hline
$As_{4}^{16}$ & $e_1e_1=e_2,$  $e_1e_2=e_3,$\newline  $e_1e_3=e_4,$  $e_2e_1=e_3,$\newline  $e_2e_2=e_4,$  $e_3e_1=e_4 $ & $\varphi(e_1)=ae_1+be_2+ce_3+de_4,$ \newline$ \varphi(e_2)=a^2e_2+2abe_3+(2ac+b^2)e_4,$\newline $\varphi(e_3)=a^3e_3+3a^2be_4,$ $\varphi(e_4)=a^4e_4,$ &$\langle e_4\rangle$\\ \hline
\end{tabular}

\end{footnotesize}

where $a, b, c, d, e, f, g, h, i, j, k, l, \alpha \in \mathbb{C}.$

\end{thm}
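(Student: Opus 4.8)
The plan is to organise the classification around the two basic invariants of a finite-dimensional nilpotent associative algebra $A$: its index of nilpotency and the dimension vector $(\dim A^{i}/A^{i+1})_{i\ge 1}$ attached to the descending chain $A=A^{1}\supseteq A^{2}\supseteq\cdots$. Since $\dim A=4$ forces $A^{5}=\{0\}$, the index lies in $\{2,3,4,5\}$. First I would settle the two extreme cases. If the index is $2$ then $A^{2}=\{0\}$, so $A$ is abelian; this algebra is decomposable and is therefore excluded from the list. If the index is $5$ then the four strict inclusions in $A\supsetneq A^{2}\supsetneq A^{3}\supsetneq A^{4}\supsetneq\{0\}$ each drop the dimension by exactly one, so $\dim A/A^{2}=1$, the algebra is generated by a single element, and it is forced to be the null-filiform algebra $As_{4}^{16}$. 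In the remaining cases the inequality $\dim A/A^{2}\ge 2$ holds (otherwise $A$ would be one-generated and of index $5$), which pins the dimension vector to $(2,1,1)$ when the index is $4$ and to $(2,2)$ or $(3,1)$ when the index is $3$.

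For the systematic enumeration inside each dimension vector I would use the central extension (Skjelbred--Sund) method, built on top of the classification of three-dimensional nilpotent associative algebras recorded in Theorem~\ref{assdim3}. Writing $\mathrm{Ann}(A)=\{x\in A: xA=Ax=0\}$, every nilpotent $A$ with $\dim\mathrm{Ann}(A)=s$ arises as an $s$-dimensional central extension of a nilpotent algebra $B$ of dimension $4-s$; concretely $A=B\oplus V$ with multiplication $(x,u)(y,v)=(xy,\theta(x,y))$, where $\theta\colon B\times B\to V$ is a bilinear map satisfying the associative $2$-cocycle identity $\theta(xy,z)=\theta(x,yz)$. Coboundaries $\theta(x,y)=f(xy)$ give split extensions, so the relevant object is the second cohomology $H^{2}(B,\mathbb{C})$, on which $\mathrm{Aut}(B)$ acts; isomorphism classes of extensions correspond to orbits of $s$-dimensional subspaces of $H^{2}(B,\mathbb{C})$ subject to the non-degeneracy condition that the common radical of $\theta$ meet $\mathrm{Ann}(B)$ trivially, which guarantees that no trivial one-dimensional summand splits off. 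Since the algebras of dimension $\le 3$ are already known, running this procedure for $s=1$ over the algebras of Theorem~\ref{assdim3} and for $s=2$ over the (abelian) two-dimensional algebra produces all candidate multiplication tables of dimension four.

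It then remains to remove the decomposable algebras and to record the automorphism groups. Here I would invoke the observation stated just before the theorem: a decomposable nilpotent algebra has centre of dimension at least two. Consequently every candidate whose centre is $\langle e_{4}\rangle$ is automatically indecomposable, and for these the one-dimensional centre is itself an isomorphism invariant. For the six candidates with two-dimensional centre $\langle e_{3},e_{4}\rangle$, namely $As_{4}^{2}$, $As_{4}^{4}$, $As_{4}^{5}$, $As_{4}^{7}(\alpha)$, $As_{4}^{11}$ and $As_{4}^{12}(\alpha)$, indecomposability has to be checked by hand: assuming a splitting $A=I\oplus J$ into nonzero ideals, I would track how the generators surviving in $A/A^{2}$ and the products landing in $A^{2}$ are distributed between $I$ and $J$, and derive a contradiction with the displayed relations.

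The main obstacle will be the orbit bookkeeping rather than any single conceptual point. The computation of the $\mathrm{Aut}(B)$-orbits on $H^{2}(B,\mathbb{C})$, carried out in coordinates, is precisely where the one-parameter families $As_{4}^{7}(\alpha)$, $As_{4}^{12}(\alpha)$ and $As_{4}^{15}(\alpha)$ emerge, and the delicate task is to normalise each family to a canonical range of the parameter and to prove that distinct admissible values really give non-isomorphic algebras. I would handle this by extracting from each cocycle class invariants fixed by $\mathrm{Aut}(B)$ --- for instance the rank of $\theta$ and the type of its symmetric and antisymmetric parts, or discriminant-like rational expressions in the structure constants --- and matching these against the explicit automorphisms listed in the last column of the table. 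This simultaneously certifies that the enumerated classes are pairwise non-isomorphic and, together with the completeness of the central-extension construction, that the list is exhaustive.
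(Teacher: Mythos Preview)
The paper does not contain its own proof of this theorem: it is stated with the citation \cite{BRR} (see also \cite{RRB_2009}) and is used as a known input to the subsequent classification of anti-dendriform structures. In the introduction the authors say explicitly that the classification in Theorem~\ref{table} ``was proven in~\cite{BRR}''. So there is no in-paper argument to compare your proposal against.

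That said, your outline is the standard and correct strategy for this kind of result. The Skjelbred--Sund central-extension method over the list of three-dimensional nilpotent associative algebras in Theorem~\ref{assdim3}, together with the nilpotency-index/dimension-vector stratification you describe, is exactly how such tables are produced in the literature (and is the approach of the cited sources). Your handling of the extreme cases (abelian versus null-filiform $As_4^{16}$), the non-degeneracy condition to avoid split summands, and the use of $\mathrm{Aut}(B)$-orbits on $H^2(B,\mathbb{C})$ to isolate the parametrised families $As_4^{7}(\alpha)$, $As_4^{12}(\alpha)$, $As_4^{15}(\alpha)$ are all on target. The one place where your sketch is genuinely only a sketch is the orbit bookkeeping and the verification that the displayed automorphism groups are the full automorphism groups; this is indeed the laborious part, and in a complete proof you would need to exhibit, for each algebra, both that the listed $\varphi$'s are automorphisms and that every automorphism has that form, as well as separate the parameter values in the families by explicit invariants.
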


\section{Four-dimensional complex anti-dendriform algebras associated to the four-dimensional associative algebras with a one-dimensional center}

In this section we will classify four-dimensional anti-dendriform algebras associated to the four-dimensional associative algebras with a one-dimensional center. According to Theorem \ref{table} the algebras $As_4^{3}, As_4^{6}, As_4^{8}, As_4^{9}, As_4^{10}, As_4^{13}, As_4^{14}, As_4^{15}(\alpha)$ and  $As_4^{16}$ have a one-dimensional center. It should be noted that the associative algebra $As_4^{16}$ is a four-dimensional null-filiiform algebra and by the Theorem 3.1 in \cite{aans25} there is no compatible structure of an antidendriform algebra on $As_4^{16}$.

\begin{thm}\label{theo_3.1}
Any four-dimensional complex anti-dendriform algebra associated to the algebra $As_4^3$ is isomorphic to one of the following pairwise non-isomorphic algebras:

$AD_4^1: \ e_1\vtr e_2=e_4, \ e_3\vtr e_1=e_4;$

$AD_4^2: \ e_1\vtr e_2=e_4, \ e_2\vtr e_1=-e_4,\ e_3\vtr e_3=e_4, e_2\vtl e_1=e_4, \ e_3\vtl e_1=e_4, \ e_3\vtl e_3=-e_4;$

$AD_4^{3}: \ \begin{cases}e_1\vtr e_1=e_4, \ e_1\vtr e_2=e_4, \ e_2\vtr e_1=-e_4, \ e_3\vtr e_3=e_4,\\
e_1\vtl e_1=-e_4,  \ e_2\vtl e_1=e_4, \ e_3\vtl e_1=e_4, \ e_3\vtl e_3=-e_4;\end{cases}$

$AD_4^{4}: \ \begin{cases}e_1\vtr e_2=e_4, \ e_1\vtr e_3=e_4, \ e_2\vtr e_1=-e_4, \ e_2\vtr e_2=e_4, \ e_3\vtr e_1=e_4\\
e_1\vtl e_3=-e_4, \ e_2\vtl e_1=e_4, \ e_2\vtl e_2=-e_4;
\end{cases}$

$AD_4^{5}(\alpha): \ \begin{cases}e_1\vtr e_1=e_4,\ e_1\vtr e_2=\alpha e_4, \ e_2\vtr e_1=-\alpha e_4, \ e_2\vtr e_2=e_4, \ e_3\vtr e_3=e_4, \ \alpha\in \mathbb{C},\\
e_1\vtl e_1=-e_4,\ e_1\vtl e_2=(1-\alpha) e_4, \ e_2\vtl e_1=\alpha e_4, \\ e_2\vtl e_2=-e_4, \ e_3\vtl e_1=e_4, \ e_3\vtl e_3=-e_4;
\end{cases}$

$AD_4^{6}: \ \begin{cases}
e_1\vtr e_1= e_4,\ e_1\vtr e_2=e_3+e_4,\ e_2\vtr e_1=-e_3,\ e_3\vtr e_1=e_4,\\
e_1\vtl e_1=- e_4,\ e_1\vtl e_2=-e_3,\ e_2\vtl e_1=e_3;\end{cases}$

$AD_4^{7}(\alpha): \ \begin{cases} e_1\vtr e_2=e_3+e_4,\ e_2\vtr e_1=-e_3+\alpha e_4,\ e_3\vtr e_1=e_4,\\ e_1\vtl e_2=-e_3,\ e_2\vtl e_1=e_3-\alpha e_4, \ \alpha\in \mathbb{C};\end{cases}$

$ AD_4^{8}(\alpha): \ \begin{cases}
e_1\vtr e_1=\alpha e_4,\ e_1\vtr e_2=e_3+e_4,\ e_2\vtr e_1=-e_3,\ e_2\vtr e_2=e_4,\ e_3\vtr e_1=e_4,\\
e_1\vtl e_1=-\alpha e_4,\ e_1\vtl e_2=-e_3,\ e_2\vtl e_1=e_3,\ e_2\vtl e_2=-e_4;
\end{cases}$

$AD_4^{9}(\alpha, \beta): \ \begin{cases}
e_1\vtr e_1=e_3,\ e_1\vtr e_2=(1+\alpha)e_4,\ e_2\vtr e_1=\beta e_4,\ e_3\vtr e_1=e_4,\\
e_1\vtl e_1=-e_3,\ e_1\vtl e_2=-\alpha e_4,\ e_2\vtl e_1=-\beta e_4;
\end{cases}$

$AD_4^{10}(\alpha): \ \begin{cases}
e_1\vtr e_1=e_3,\ e_1\vtr e_2=(1+\alpha)e_4,\  e_2\vtr e_2=e_4,\ e_3\vtr e_1=e_4,\\
e_1\vtl e_1=-e_3,\ e_1\vtl e_2=-\alpha e_4,\ e_2\vtl e_2=-e_4;
\end{cases}$

$AD_4^{11}(\alpha, \delta): \ \begin{cases}
e_1\vtr e_1=\delta e_4,\ e_1\vtr e_2=e_3+e_4,\ e_2\vtr e_1=\alpha e_4,\ e_3\vtr e_1=e_4,\\
e_1\vtl e_1=-\delta e_4,\ e_1\vtl e_2=-e_3,\ e_2\vtl e_1=-\alpha e_4, \ \delta\in \mathbb{C};
\end{cases}$

$AD_4^{12}(\alpha,\beta): \ \begin{cases}
e_1\vtr e_1=\alpha e_4,\ e_1\vtr e_2=e_3+e_4,\ e_2\vtr e_1=\beta e_4,\ e_2\vtr e_2=e_4,\ e_3\vtr e_1=e_4,\\
e_1\vtl e_1=-\alpha e_4,\ e_1\vtl e_2=-e_3,\ e_2\vtl e_1=-\beta e_4,\ e_2\vtl e_2=-e_4.
\end{cases}$

$AD_4^{13}[\lambda](\alpha,\beta,\gamma):\ \begin{cases}
e_1\vtr e_1=e_3,\ e_1\vtr e_2=\lambda e_3+(1+\alpha)e_4,\ e_2\vtr e_1=\beta e_4,\\ e_2\vtr e_2=e_3+\gamma e_4,\ e_3\vtr e_1=e_4,\\
e_1\vtl e_1=-e_3,\ e_1\vtl e_2=-\lambda e_3-\alpha e_4,\ e_2\vtl e_1=-\beta e_4,\ e_2\vtl e_2=-e_3-\gamma e_4,
\end{cases}
$
where if $\lambda\neq 0$, then $\gamma \in \mathbb{C}$ and if $\lambda=0$, then $\gamma\geq0$.

\end{thm}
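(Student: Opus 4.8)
My plan is to determine all compatible anti-dendriform structures on the fixed associative algebra $As_4^3$ and then reduce them modulo its automorphism group. Throughout I fix the associative product $\cdot$ recorded in Theorem~\ref{table}, namely $e_1\cdot e_2=e_4$ and $e_3\cdot e_1=e_4$ with all other basis products zero, so that $A^2=\langle e_4\rangle$ and (since $e_4$ annihilates everything) $A^3=\{0\}$. Because $x\vtl y=x\cdot y-x\vtr y$ by \eqref{cdot}, the whole structure is encoded by the single operation $\vtr$; I write $e_i\vtr e_j=\sum_{k=1}^4 a_{ij}^k e_k$ and treat the $a_{ij}^k$ as unknowns. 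Compatibility with $\cdot$ is then automatic, so the only constraints are the seven identities \eqref{id1}--\eqref{id7}.

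First I would cut down the unknowns. A preliminary reduction, obtained by substituting low-degree instances of \eqref{id2}--\eqref{id4} and using $A^3=\{0\}$, should force every product $e_i\vtr e_j$ to lie in $\langle e_3,e_4\rangle$ (this is the shape every algebra in the statement already has): a product leaving $\langle e_3,e_4\rangle$, when fed into another $\vtr$ or $\vtl$, generates terms that the right-hand sides, which live in $(A\cdot A)\vtr A\subseteq\langle e_4\rangle\vtr A$, cannot absorb. I would establish this as a lemma first. After it, the free parameters are just the $e_3$- and $e_4$-coefficients of the sixteen products $e_i\vtr e_j$, and substituting into \eqref{id1}--\eqref{id7} yields a system of quadratic equations in these coefficients.

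Next I would solve this system by a case analysis organized according to which \emph{leading} coefficients vanish: whether $e_1\vtr e_1$, $e_2\vtr e_2$ or $e_3\vtr e_3$ has a nonzero $e_3$-component, and whether $e_1\vtr e_2$ produces $e_3$. In each branch most remaining unknowns are determined linearly, leaving a handful of free scalars; these branches are what produce the thirteen families $AD_4^1$--$AD_4^{13}$. Finally I would act by $\mathrm{Aut}(As_4^3)$, whose general element $\varphi$ depends on six parameters $a,b,c,d,e,f$ with $a,b\neq0$ (Theorem~\ref{table}). Transporting $\vtr$ by $\varphi$ permutes the solution set; I would use $a,b$ to normalize nonzero leading coefficients to $1$ and use $c,d,e,f$ to clear inessential terms, fixing the canonical representatives. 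Non-isomorphism within and across families is then verified by exhibiting $\varphi$-invariants (ranks of the products, the distinguished subspaces $\mathrm{Z}_{AD}(A)$ and the span of all $\vtr$- and $\vtl$-products, etc.) that separate the normal forms.

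The main obstacle will not be any single identity --- each is a routine quadratic relation --- but the size of the combined system together with the final orbit analysis for the parametrized families. In particular, deciding exactly which parameter values give isomorphic algebras requires understanding the residual stabilizer in $\mathrm{Aut}(As_4^3)$ after the leading coefficients have been normalized. This is most delicate for $AD_4^{13}[\lambda](\alpha,\beta,\gamma)$: once the $e_3$-coefficients of $e_1\vtr e_1$ and $e_2\vtr e_2$ are pinned to $1$, the surviving scaling acts on $\gamma$, and its effect depends on whether $\lambda=0$. When $\lambda\neq0$ the requirement of preserving $\lambda$ exhausts the scaling freedom and leaves $\gamma$ a genuine complex modulus, whereas when $\lambda=0$ a residual real scaling remains and normalizes $\gamma$ to a nonnegative real. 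Tracking this dichotomy carefully is the crux of proving that the list is simultaneously complete and irredundant.
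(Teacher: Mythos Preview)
Your strategy is sound but diverges from the paper's in one structural respect. The paper does not attack the sixteen products directly; instead it first shows, by plugging seven well-chosen basis triples into identity~\eqref{id5}, that $e_4\vtr x=x\vtl e_4=0$ for all $x$, hence $\langle e_4\rangle=\mathrm{Z}_{AD}(A)=\mathrm{Z}_{As}(A)$. Proposition~\ref{compatible} then says the quotient $(A/\langle e_4\rangle,\vtr,\vtl)$ is a compatible anti-dendriform structure on the abelian $3$-dimensional algebra, and by the already-known $3$-dimensional classification it must be one of $AD_3^3,\ldots,AD_3^7(\lambda)$. Each of these five cases is then lifted back with free $e_4$-coefficients, the identities are imposed (killing the coefficients $\alpha_{i3},\alpha_{3j}$ in the non-abelian cases), and the automorphism group of $As_4^3$ normalizes what remains. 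Your ``products land in $\langle e_3,e_4\rangle$'' lemma is exactly the content of this quotient step together with the $3$-dimensional list, but by not invoking Proposition~\ref{compatible} you are effectively re-deriving the $3$-dimensional abelian classification inside the $4$-dimensional computation. That will work, but it is longer and your case split as written (on the $e_3$-components of $e_1\vtr e_1$, $e_2\vtr e_2$, $e_3\vtr e_3$, $e_1\vtr e_2$) does not yet separate $AD_3^4$ from $AD_3^6$ (you also need the $e_3$-component of $e_2\vtr e_1$) and does not address why $e_3\vtr e_j$ never carries an $e_3$-component.

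One genuine slip: your description of the $AD_4^{13}[\lambda]$ dichotomy is not right. After normalizing the $e_3$-coefficients of $e_1\vtr e_1$ and $e_2\vtr e_2$ to $1$, the paper's calculation forces $a^2=1$, i.e.\ $a=\pm1$; when $\lambda\neq0$ a further relation pins $a=1$, while when $\lambda=0$ both signs survive and act by $\gamma\mapsto a\gamma=\pm\gamma$. This is a discrete $\mathbb{Z}/2$ action, not a ``residual real scaling'', and its orbits on $\mathbb{C}$ are $\{\gamma,-\gamma\}$; you cannot send a non-real $\gamma$ to a nonnegative real. The condition ``$\gamma\ge0$'' in the statement should be read as a choice of orbit representatives for this involution, not as the output of a continuous rescaling.
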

\begin{proof}
By considering \eqref{id5} for the following triples
$$\{e_1,e_2,e_1\},\
\{e_1,e_1,e_2\},\
\{e_1,e_2,e_2\}, \
\{e_2,e_1,e_2\},\
\{e_3,e_3,e_1\},\ \{e_3,e_1,e_3\},\
\{e_4,e_3,e_1\}.$$ we get
$e_4\vtr e_1=0, \ e_1\vtl e_4=0, \ e_4\vtr e_2=0, \ e_2\vtl e_4=0, \ e_3\vtl e_4=0, \ e_4\vtr e_3=0, \ e_4\vtl e_4=0$, respectively.

By $e_4e_i=e_4\vtr e_i+e_4\vtl e_i=0$ and $e_ie_4=e_i\vtr e_4+e_i\vtl e_4=0$ these imply $e_4\vtl e_i=0$ and $e_i\vtr e_4=0$. Then it is easy to see that $\langle e_4\rangle$ is the center of the algebras $(As_4^3, \cdot)$ and
 $(As_4^3, \vtr, \vtl)$. If we take the algebra $As_4^3/\langle e_4\rangle,$ this algebra is a three-dimensional associative Abelian algebra, and we know that any three-dimensional complex anti-dendriform algebra associated with the Abelian algebra is isomorphic to one of the pairwise non-isomorphic algebras: $AD_3^3-AD_3^7(\lambda).$

According to Proposition \ref{compatible} one can write

\textbf{Case $(As_4^3/\langle e_4\rangle, \vtr, \vtl)\cong AD_3^3$}
\[
\begin{cases}
e_1\vtr e_1=\alpha_{11}e_4,\\
e_1\vtr e_2=(1+\alpha_{12})e_4,\\
e_1\vtr e_3=\alpha_{13}e_4,\\
e_2\vtr e_1=\alpha_{21}e_4,\\
e_2\vtr e_2=\alpha_{22}e_4,\\
e_2\vtr e_3=\alpha_{23}e_4,\\
e_3\vtr e_1=(1+\alpha_{31})e_4,\\
e_3\vtr e_2=\alpha_{32}e_4,\\
e_3\vtr e_3=\alpha_{33}e_4,
\end{cases}
\qquad
\begin{cases}
e_1\vtl e_1=-\alpha_{11}e_4,\\
e_1\vtl e_2=-\alpha_{12}e_4,\\
e_1\vtl e_3=-\alpha_{13}e_4,\\
e_2\vtl e_1=-\alpha_{21}e_4,\\
e_2\vtl e_2=-\alpha_{22}e_4,\\
e_2\vtl e_3=-\alpha_{23}e_4,\\
e_3\vtl e_1=-\alpha_{31}e_4,\\
e_3\vtl e_2=-\alpha_{32}e_4,\\
e_3\vtl e_3=-\alpha_{33}e_4.
\end{cases}
\]

It is not difficult to see that $(As_4^3, \vtr, \vtl)$ are 2-nilpotent and have three generator elements. Thus we have $(x\vtr y)\vtr z=x\vtr (y\vtr z)=0$ which implies that $(As_4^3, \vtr)$ is also associative 2-nilpotent algebra and has three generator elements. According to Theorem \ref{table} there are five non-isomorphic four-dimensional indecomposable associative 2-nilpotent and the three generated algebras. Hence, we get $AD_4^{1}-AD_4^{5}(\alpha)$ algebras.

It is possible to consider the multiplication $\vtl$ as above. However, it is not difficult to show that the constructed algebras are isomorphic.

\textbf{Case $(As_4^3/\langle e_4\rangle, \vtr, \vtl)\cong AD_3^4$}

\[
\begin{cases}
e_1\vtr e_1=\alpha_{11}e_4,\\
e_1\vtr e_2=e_3+(1+\alpha_{12})e_4,\\
e_1\vtr e_3=\alpha_{13}e_4,\\
e_2\vtr e_1=-e_3+\alpha_{21}e_4,\\
e_2\vtr e_2=\alpha_{22}e_4,\\
e_2\vtr e_3=\alpha_{23}e_4,\\
e_3\vtr e_1=(1+\alpha_{31})e_4,\\
e_3\vtr e_2=\alpha_{32}e_4,\\
e_3\vtr e_3=\alpha_{33}e_4,
\end{cases}
\qquad
\begin{cases}
e_1\vtl e_1=-\alpha_{11}e_4,\\
e_1\vtl e_2=-e_3-\alpha_{12}e_4,\\
e_1\vtl e_3=-\alpha_{13}e_4,\\
e_2\vtl e_1=e_3-\alpha_{21}e_4,\\
e_2\vtl e_2=-\alpha_{22}e_4,\\
e_2\vtl e_3=-\alpha_{23}e_4,\\
e_3\vtl e_1=-\alpha_{31}e_4,\\
e_3\vtl e_2=-\alpha_{32}e_4,\\
e_3\vtl e_3=-\alpha_{33}e_4.
\end{cases}
\]

By the basis change $e'_3=e_3+\alpha_{12}e_4$, we can rewrite
\[
\begin{cases}
e_1\vtr e_1=\alpha_{11}e_4,\\
e_1\vtr e_2=e_3+e_4,\\
e_1\vtr e_3=\alpha_{13}e_4,\\
e_2\vtr e_1=-e_3+\alpha_{21}e_4,\\
e_2\vtr e_2=\alpha_{22}e_4,\\
e_2\vtr e_3=\alpha_{23}e_4,\\
e_3\vtr e_1=(1+\alpha_{31})e_4,\\
e_3\vtr e_2=\alpha_{32}e_4,\\
e_3\vtr e_3=\alpha_{33}e_4,
\end{cases}
\qquad
\begin{cases}
e_1\vtl e_1=-\alpha_{11}e_4,\\
e_1\vtl e_2=-e_3,\\
e_1\vtl e_3=-\alpha_{13}e_4,\\
e_2\vtl e_1=e_3-\alpha_{21}e_4,\\
e_2\vtl e_2=-\alpha_{22}e_4,\\
e_2\vtl e_3=-\alpha_{23}e_4,\\
e_3\vtl e_1=-\alpha_{31}e_4,\\
e_3\vtl e_2=-\alpha_{32}e_4,\\
e_3\vtl e_3=-\alpha_{33}e_4.
\end{cases}
\]
Consider \eqref{id6} for the triple $\{e_1,e_2,e_i\}$:
\[
-(e_1e_2)\vtr e_i=(e_1\vtl e_2)\vtl e_i\ \Leftrightarrow \
-e_4\vtr e_i=-e_3\vtl e_i\ \Leftrightarrow \
e_3\vtl e_i=0.
\]
This implies $\alpha_{3i}=0, \ i\in\{1,2,3\}$. Similarly, from \eqref{id3} for the triple $\{e_i,e_1,e_2\}$ one can get $\alpha_{i3}=0, \ i\in\{1,2,3\}$. No we  rewrite
\[
\begin{cases}
e_1\vtr e_1=\alpha_{11}e_4,\ e_1\vtr e_2=e_3+e_4,\ e_2\vtr e_1=-e_3+\alpha_{21}e_4,\ e_2\vtr e_2=\alpha_{22}e_4,\ e_3\vtr e_1=e_4,\\
e_1\vtl e_1=-\alpha_{11}e_4,\ e_1\vtl e_2=-e_3,\ e_2\vtl e_1=e_3-\alpha_{21}e_4,\ e_2\vtl e_2=-\alpha_{22}e_4.
\end{cases}
\]

Let us consider the general change of the generators of basis:
\[\varphi(e_1)=ae_1+ce_2-ce_3+de_4,\ \varphi(e_2)=be_2+ee_4,\ \varphi(e_3)=be_3+fe_4,\ \varphi(e_4)=abe_4,\]
where $ab\neq0$.

We express the new basis elements $\{\varphi(e_1), \varphi(e_2),\varphi(e_3), \varphi(e_4)\}$  via the basis elements $\{e_1, e_2, e_3, e_4\}.$  By verifying all the multiplications of the algebra in the new basis we obtain the relations between the parameters $\{\alpha'_{11}, \alpha'_{21}, \alpha'_{22}\}$ and $\{\alpha_{11}, \alpha_{21}, \alpha_{22}\}$:

$$
\begin{array}{lll}
\varphi(e_1)\vtr \varphi(e_2)=\varphi(e_3)+\varphi(e_4) & \Rightarrow & a=1, \  f=bc\alpha_{22},\\[1mm]
\varphi(e_1)\vtr \varphi(e_1)=\alpha^\prime_{11}\varphi(e_4) & \Rightarrow & \alpha^\prime_{11}=\frac{\alpha_{11}+c\alpha_{21}+c^2\alpha_{22}}{b},\\[1mm]
\varphi(e_2)\vtr \varphi(e_1)=-\varphi(e_3)+\alpha^\prime_{21}\varphi(e_4) & \Rightarrow & \alpha^\prime_{21}=\alpha_{21}+2c\alpha_{22},\\[1mm]
\varphi(e_2)\vtr \varphi(e_2)=\alpha^\prime_{22}\varphi(e_4) & \Rightarrow & \alpha^\prime_{22}=b\alpha_{22}.\\[1mm]
\end{array}
$$

We have the following cases:

\begin{itemize}
    \item Let $\alpha_{22}=0$. Then we have $\alpha'_{22}=0, \ \alpha'_{21}=\alpha_{21},
    \ \alpha'_{11}=\displaystyle\frac{\alpha_{11}+c\alpha_{21}}{b}$. If $\alpha_{21}=0$, we get the algebra $(0,0,0)$ and $(1,0,0)$ when $\alpha_{11}=0$ and $\alpha_{11}\neq0$ with choosing $b=\alpha_{11}$, respectively i.e. $AD_4^{6}(\delta), \ \delta\in\{0,1\}$. If $\alpha_{21}\neq0$, then choosing $c=-\frac{\alpha_{11}}{\alpha_{21}}$, we obtain the algebra $(0, \alpha,0), \ \alpha\neq0$ i.e. $AD_4^{7}(\alpha)$. So, in this case, we get the algebras  $AD_4^{6}$ and $AD_4^{7}(\alpha), \ \alpha\in \mathbb{C}.$

\item Let $\alpha_{22}\neq0$. Then choosing $b=\frac{1}{\alpha_{22}}$ and $c=-\frac{\alpha_{21}}{2\alpha_{22}}$, we obtain the algebra $AD_4^{8}(\alpha)$.
\end{itemize}

\textbf{Case $(As_4^3/\langle e_4\rangle, \vtr, \vtl)\cong AD_3^5$}

\[
\begin{cases}
e_1\vtr e_1=e_3+\alpha_{11}e_4,\\
e_1\vtr e_2=(1+\alpha_{12})e_4,\\
e_1\vtr e_3=\alpha_{13}e_4,\\
e_2\vtr e_1=\alpha_{21}e_4,\\
e_2\vtr e_2=\alpha_{22}e_4,\\
e_2\vtr e_3=\alpha_{23}e_4,\\
e_3\vtr e_1=(1+\alpha_{31})e_4,\\
e_3\vtr e_2=\alpha_{32}e_4,\\
e_3\vtr e_3=\alpha_{33}e_4,
\end{cases}
\qquad
\begin{cases}
e_1\vtl e_1=-e_3-\alpha_{11}e_4,\\
e_1\vtl e_2=-\alpha_{12}e_4,\\
e_1\vtl e_3=-\alpha_{13}e_4,\\
e_2\vtl e_1=-\alpha_{21}e_4,\\
e_2\vtl e_2=-\alpha_{22}e_4,\\
e_2\vtl e_3=-\alpha_{23}e_4,\\
e_3\vtl e_1=-\alpha_{31}e_4,\\
e_3\vtl e_2=-\alpha_{32}e_4,\\
e_3\vtl e_3=-\alpha_{33}e_4.
\end{cases}
\]

By the basis change $e'_3=e_3+\alpha_{11}e_4$, we can get
\[
\begin{cases}
e_1\vtr e_1=e_3,\\
e_1\vtr e_2=(1+\alpha_{12})e_4,\\
e_1\vtr e_3=\alpha_{13}e_4,\\
e_2\vtr e_1=\alpha_{21}e_4,\\
e_2\vtr e_2=\alpha_{22}e_4,\\
e_2\vtr e_3=\alpha_{23}e_4,\\
e_3\vtr e_1=(1+\alpha_{31})e_4,\\
e_3\vtr e_2=\alpha_{32}e_4,\\
e_3\vtr e_3=\alpha_{33}e_4,
\end{cases}
\qquad
\begin{cases}
e_1\vtl e_1=-e_3,\\
e_1\vtl e_2=-\alpha_{12}e_4,\\
e_1\vtl e_3=-\alpha_{13}e_4,\\
e_2\vtl e_1=-\alpha_{21}e_4,\\
e_2\vtl e_2=-\alpha_{22}e_4,\\
e_2\vtl e_3=-\alpha_{23}e_4,\\
e_3\vtl e_1=-\alpha_{31}e_4,\\
e_3\vtl e_2=-\alpha_{32}e_4,\\
e_3\vtl e_3=-\alpha_{33}e_4.
\end{cases}
\]
Consider \eqref{id6} for the triple $\{e_1,e_1,e_i\}$:
\[
-(e_1e_1)\vtr e_i=(e_1\vtl e_1)\vtl e_i\ \Leftrightarrow \ e_3\vtl e_i=0.
\]
This implies $\alpha_{3i}=0, \ i\in\{1,2,3\}$. Similarly, from \eqref{id3} for the triple $\{e_i,e_1,e_1\}$ one can get $\alpha_{i3}=0, \ i\in\{1,2,3\}$. No we  rewrite
\[
\begin{cases}
e_1\vtr e_1=e_3,\ e_1\vtr e_2=(1+\alpha_{12})e_4,\ e_2\vtr e_1=\alpha_{21}e_4,\ e_2\vtr e_2=\alpha_{22}e_4,\ e_3\vtr e_1=e_4,\\
e_1\vtl e_1=-e_3,\ e_1\vtl e_2=-\alpha_{12}e_4,\ e_2\vtl e_1=-\alpha_{21}e_4,\ e_2\vtl e_2=-\alpha_{22}e_4.
\end{cases}
\]

Let us consider the general change of the generators of basis:
\[\varphi(e_1)=ae_1+ce_2-ce_3+de_4,\ \varphi(e_2)=be_2+ee_4,\ \varphi(e_3)=be_3+fe_4,\ \varphi(e_4)=abe_4,\]
where $ab\neq0$.

We express the new basis elements $\{\varphi(e_1), \varphi(e_2),\varphi(e_3), \varphi(e_4)\}$  via the basis elements $\{e_1, e_2, e_3, e_4\}.$  By verifying all the multiplications of the algebra in the new basis we obtain the relations between the parameters $\{\alpha'_{12}, \alpha'_{21}, \alpha'_{22}\}$ and $\{\alpha_{12}, \alpha_{21}, \alpha_{22}\}$:

$$
\begin{array}{lll}
\varphi(e_1)\vtr \varphi(e_1)=\varphi(e_3) & \Rightarrow & b=a^2, \  f=ac\alpha_{12}+ac\alpha_{21}+c^2\alpha_{22},\\[1mm]
\varphi(e_1)\vtr \varphi(e_2)=(1+\alpha^\prime_{12})\varphi(e_4) & \Rightarrow & \alpha^\prime_{12}=\alpha_{12}+\frac{c}{a}\alpha_{22},\\[1mm]
\varphi(e_2)\vtr \varphi(e_1)=-\varphi(e_3)+\alpha^\prime_{21}\varphi(e_4) & \Rightarrow & \alpha^\prime_{21}=\alpha_{21}+\frac{c}{a}\alpha_{22},\\[1mm]
\varphi(e_2)\vtr \varphi(e_2)=\alpha^\prime_{22}\varphi(e_4) & \Rightarrow & \alpha^\prime_{22}=a\alpha_{22}.\\[1mm]
\end{array}
$$

Then we have the following cases:
\begin{itemize}
    \item Let $\alpha_{22}=0$. Then we get the algebra $AD_4^{9}(\alpha, \beta)$.

\item Let $\alpha_{22}\neq0$. Then choosing $a=\frac{1}{\alpha_{22}}$ and $c=-\frac{\alpha_{21}}{\alpha_{22}^2}$ we obtain the algebra $AD_4^{10}(\alpha)$.
\end{itemize}

\textbf{Case $(As_4^3/\langle e_4\rangle, \vtr, \vtl)\cong AD_3^6$}
\[
\begin{cases}
e_1\vtr e_1=\alpha_{11}e_4,\\
e_1\vtr e_2=e_3+(1+\alpha_{12})e_4,\\
e_1\vtr e_3=\alpha_{13}e_4,\\
e_2\vtr e_1=\alpha_{21}e_4,\\
e_2\vtr e_2=\alpha_{22}e_4,\\
e_2\vtr e_3=\alpha_{23}e_4,\\
e_3\vtr e_1=(1+\alpha_{31})e_4,\\
e_3\vtr e_2=\alpha_{32}e_4,\\
e_3\vtr e_3=\alpha_{33}e_4,
\end{cases}
\qquad
\begin{cases}
e_1\vtl e_1=-\alpha_{11}e_4,\\
e_1\vtl e_2=-e_3-\alpha_{12}e_4,\\
e_1\vtl e_3=-\alpha_{13}e_4,\\
e_2\vtl e_1=-\alpha_{21}e_4,\\
e_2\vtl e_2=-\alpha_{22}e_4,\\
e_2\vtl e_3=-\alpha_{23}e_4,\\
e_3\vtl e_1=-\alpha_{31}e_4,\\
e_3\vtl e_2=-\alpha_{32}e_4,\\
e_3\vtl e_3=-\alpha_{33}e_4.
\end{cases}
\]

By the basis change $e'_3=e_3+\alpha_{12}e_4$, we can get
\[
\begin{cases}
e_1\vtr e_1=\alpha_{11}e_4,\\
e_1\vtr e_2=e_3+e_4,\\
e_1\vtr e_3=\alpha_{13}e_4,\\
e_2\vtr e_1=\alpha_{21}e_4,\\
e_2\vtr e_2=\alpha_{22}e_4,\\
e_2\vtr e_3=\alpha_{23}e_4,\\
e_3\vtr e_1=(1+\alpha_{31})e_4,\\
e_3\vtr e_2=\alpha_{32}e_4,\\
e_3\vtr e_3=\alpha_{33}e_4,
\end{cases}
\qquad
\begin{cases}
e_1\vtl e_1=-\alpha_{11}e_4,\\
e_1\vtl e_2=-e_3,\\
e_1\vtl e_3=-\alpha_{13}e_4,\\
e_2\vtl e_1=-\alpha_{21}e_4,\\
e_2\vtl e_2=-\alpha_{22}e_4,\\
e_2\vtl e_3=-\alpha_{23}e_4,\\
e_3\vtl e_1=-\alpha_{31}e_4,\\
e_3\vtl e_2=-\alpha_{32}e_4,\\
e_3\vtl e_3=-\alpha_{33}e_4.
\end{cases}
\]
Consider \eqref{id6} for the triple $\{e_1,e_2,e_i\}$:
\[
-(e_1e_2)\vtr e_i=(e_1\vtl e_2)\vtl e_i \ \Leftrightarrow \ -e_4\vtr e_i=-e_3\vtl e_i,\ \Leftrightarrow \
e_3\vtl e_i=0.
\]
This implies $\alpha_{3i}=0, \ i\in\{1,2,3\}$. Similarly, from \eqref{id3} for the triple $\{e_i,e_1,e_2\}$ one can get $\alpha_{i3}=0, \ i\in\{1,2,3\}$. Thus, we have the following table of multiplications:
\[
\begin{cases}
e_1\vtr e_1=\alpha_{11}e_4,\ e_1\vtr e_2=e_3+e_4,\ e_2\vtr e_1=\alpha_{21}e_4,\ e_2\vtr e_2=\alpha_{22}e_4,\ e_3\vtr e_1=e_4,\\
e_1\vtl e_1=-\alpha_{11}e_4,\ e_1\vtl e_2=-e_3,\ e_2\vtl e_1=-\alpha_{21}e_4,\ e_2\vtl e_2=-\alpha_{22}e_4.
\end{cases}
\]

Let us consider the general change of the generators of basis:
\[\varphi(e_1)=ae_1+ce_2-ce_3+de_4,\ \varphi(e_2)=be_2+ee_4,\ \varphi(e_3)=be_3+fe_4,\ \varphi(e_4)=abe_4,\]
where $ab\neq0$.

We express the new basis elements $\{\varphi(e_1), \varphi(e_2),\varphi(e_3), \varphi(e_4)\}$  via the basis elements $\{e_1, e_2, e_3, e_4\}.$  By verifying all the multiplications of the algebra in the new basis we obtain the relations between the parameters $\{\alpha'_{11}, \alpha'_{21}, \alpha'_{22}\}$ and $\{\alpha_{11}, \alpha_{21}, \alpha_{22}\}$:

$$
\begin{array}{lll}
\varphi(e_1)\vtr \varphi(e_2)=\varphi(e_3)+\varphi(e_4) & \Rightarrow & a=1, \  f=bc\alpha_{22},\\[1mm]
\varphi(e_1)\vtr \varphi(e_1)=\alpha^\prime_{11}\varphi(e_4) & \Rightarrow & c=0, \ \alpha^\prime_{11}=\frac{1}{b}\alpha_{11},\\[1mm]
\varphi(e_2)\vtr \varphi(e_1)=\alpha^\prime_{21}\varphi(e_4) & \Rightarrow & \alpha^\prime_{21}=\alpha_{21},\\[1mm]
\varphi(e_2)\vtr \varphi(e_2)=\alpha^\prime_{22}\varphi(e_4) & \Rightarrow & \alpha^\prime_{22}=b\alpha_{22}.\\[1mm]
\end{array}
$$

Thus, we have the following cases:
\begin{itemize}
    \item Let $\alpha_{22}=0$. If $\alpha_{11}=0$, then we get the algebra $AD_4^{11}(\alpha, 0)$. If $\alpha_{11}\neq0$, choosing $b=\frac{1}{\alpha_{11}}$, we obtain the algebra $AD_4^{11}(\alpha, 1)$.

\item  Let $\alpha_{22}\neq0$. Then choosing $b=\frac{1}{\alpha_{22}}$ we obtain the algebra $AD_4^{12}(\alpha,\beta)$.
\end{itemize}

\textbf{Case $(As_4^3/\langle e_4\rangle, \vtr, \vtl)\cong AD_3^7(\lambda)$}
\[
\begin{cases}
e_1\vtr e_1=e_3+\alpha_{11}e_4,\\
e_1\vtr e_2=\lambda e_3+(1+\alpha_{12})e_4,\\
e_1\vtr e_3=\alpha_{13}e_4,\\
e_2\vtr e_1=\alpha_{21}e_4,\\
e_2\vtr e_2=e_3+\alpha_{22}e_4,\\
e_2\vtr e_3=\alpha_{23}e_4,\\
e_3\vtr e_1=(1+\alpha_{31})e_4,\\
e_3\vtr e_2=\alpha_{32}e_4,\\
e_3\vtr e_3=\alpha_{33}e_4,
\end{cases}
\qquad
\begin{cases}
e_1\vtl e_1=-e_3-\alpha_{11}e_4,\\
e_1\vtl e_2=-\lambda e_3-\alpha_{12}e_4,\\
e_1\vtl e_3=-\alpha_{13}e_4,\\
e_2\vtl e_1=-\alpha_{21}e_4,\\
e_2\vtl e_2=-e_3-\alpha_{22}e_4,\\
e_2\vtl e_3=-\alpha_{23}e_4,\\
e_3\vtl e_1=-\alpha_{31}e_4,\\
e_3\vtl e_2=-\alpha_{32}e_4,\\
e_3\vtl e_3=-\alpha_{33}e_4.
\end{cases}
\]
where $\lambda\in \mathbb{C}.$

By the basis change $e'_3=e_3+\alpha_{11}e_4$, we can get
\[
\begin{cases}
e_1\vtr e_1=e_3,\\
e_1\vtr e_2=\lambda e_3+(1+\alpha_{12})e_4,\\
e_1\vtr e_3=\alpha_{13}e_4,\\
e_2\vtr e_1=\alpha_{21}e_4,\\
e_2\vtr e_2=e_3+\alpha_{22}e_4,\\
e_2\vtr e_3=\alpha_{23}e_4,\\
e_3\vtr e_1=(1+\alpha_{31})e_4,\\
e_3\vtr e_2=\alpha_{32}e_4,\\
e_3\vtr e_3=\alpha_{33}e_4,
\end{cases}
\qquad
\begin{cases}
e_1\vtl e_1=-e_3,\\
e_1\vtl e_2=-\lambda e_3-\alpha_{12}e_4,\\
e_1\vtl e_3=-\alpha_{13}e_4,\\
e_2\vtl e_1=-\alpha_{21}e_4,\\
e_2\vtl e_2=-e_3-\alpha_{22}e_4,\\
e_2\vtl e_3=-\alpha_{23}e_4,\\
e_3\vtl e_1=-\alpha_{31}e_4,\\
e_3\vtl e_2=-\alpha_{32}e_4,\\
e_3\vtl e_3=-\alpha_{33}e_4.
\end{cases}
\]
Consider \eqref{id6} for the triple $\{e_1,e_1,e_i\}$:
\[
-(e_1e_1)\vtr e_i=(e_1\vtl e_1)\vtl e_i\ \Leftrightarrow \
e_3\vtl e_i=0.
\]
This implies $\alpha_{3i}=0, \ i\in\{1,2,3\}$. Similarly, from \eqref{id3} for the triple $\{e_i,e_1,e_1\}$ one can get $\alpha_{i3}=0, \ i\in\{1,2,3\}$. No we  rewrite
\[
\begin{cases}
e_1\vtr e_1=e_3,\ e_1\vtr e_2=\lambda e_3+(1+\alpha_{12})e_4,\ e_2\vtr e_1=\alpha_{21}e_4,\ e_2\vtr e_2=e_3+\alpha_{22}e_4,\ e_3\vtr e_1=e_4,\\
e_1\vtl e_1=-e_3,\ e_1\vtl e_2=-\lambda e_3-\alpha_{12}e_4,\ e_2\vtl e_1=-\alpha_{21}e_4,\ e_2\vtl e_2=-e_3-\alpha_{22}e_4.
\end{cases}
\]

Let us consider the general change of the generators of basis:
\[\varphi(e_1)=ae_1+ce_2-ce_3+de_4,\ \varphi(e_2)=be_2+ee_4,\ \varphi(e_3)=be_3+fe_4,\ \varphi(e_4)=abe_4,\]
where $ab\neq0$.

We express the new basis elements $\{\varphi(e_1), \varphi(e_2),\varphi(e_3), \varphi(e_4)\}$  via the basis elements $\{e_1, e_2, e_3, e_4\}.$  By verifying all the multiplications of the algebra in the new basis we obtain the relations between the parameters $\{\alpha'_{12}, \alpha'_{21}, \alpha'_{22}\}$ and $\{\alpha_{12}, \alpha_{21}, \alpha_{22}\}$:

$$
\begin{array}{lll}
\varphi(e_1)\vtr \varphi(e_1)=\varphi(e_3) & \Rightarrow & b=a^2+ac\lambda+c^2, \  f=ac\alpha_{12}+ac\alpha_{21}+c^2\alpha_{22},\\[1mm]
\varphi(e_1)\vtr \varphi(e_2)=\lambda\varphi(e_3)+(1+\alpha^\prime_{12})\varphi(e_4) & \Rightarrow & a\lambda+c=\lambda, \ ab\alpha_{12}+bc\alpha_{22}=f\lambda+ab\alpha^\prime_{12},\\[1mm]
\varphi(e_2)\vtr \varphi(e_1)=\alpha^\prime_{21}\varphi(e_4) & \Rightarrow & c=0, \ \alpha^\prime_{21}=\alpha_{21},\\[1mm]
\varphi(e_2)\vtr \varphi(e_2)=\varphi(e_3)+\alpha^\prime_{22}\varphi(e_4) & \Rightarrow & b=1, \ \alpha^\prime_{22}=a\alpha_{22}.\\[1mm]
\end{array}
$$

From this, we get
\[
a^2=b=1,\ c=f=0, \ \alpha^\prime_{12}=\alpha_{12}, \ \alpha^\prime_{21}=\alpha_{21}, \ \alpha^\prime_{22}=a\alpha_{22}.
\]

Hence, we have $AD_4^{13}[\lambda](\alpha,\beta,\gamma)$ for $\lambda\neq 0$  and $AD_4^{13}[0](\alpha,\beta,\gamma)$, where $\gamma\geq 0$.

\end{proof}

\begin{thm}\label{theo_3.2}
Any four-dimensional complex anti-dendriform algebra associated to the algebra $As_4^6$ is isomorphic to one of the following pairwise non-isomorphic algebras:

$AD_4^{14}: \ e_1\vtr e_2=e_4, \ e_3\vtr e_1=e_4, \ e_2\vtl e_1=-e_4, \ e_3\vtl e_1=-e_4, \ e_3\vtl e_3=e_4;$

$AD_4^{15}: \ e_1\vtr e_2=e_4, \ e_2\vtr e_1=-e_4,\ e_3\vtr e_3=e_4;$

$AD_4^{16}: \ e_1\vtr e_1=e_4, \ e_1\vtr e_2=e_4, \ e_2\vtr e_1=-e_4, \ e_3\vtr e_3=e_4,\ e_1\vtl e_1=-e_4;
$

$AD_4^{17}: \ \begin{cases}e_1\vtr e_2=e_4, \ e_1\vtr e_3=e_4, \ e_2\vtr e_1=-e_4, \ e_2\vtr e_2=e_4, \ e_3\vtr e_1=e_4,\\
e_1\vtl e_3=-e_4,  \ e_2\vtl e_2=-e_4, \ e_3\vtl e_1=-e_4, \ e_3\vtl e_3=e_4;
\end{cases}$

$AD_4^{18}(\alpha): \ \begin{cases}e_1\vtr e_1=e_4,\ e_1\vtr e_2=\alpha e_4, \ e_2\vtr e_1=-\alpha e_4, \ e_2\vtr e_2=e_4, \ e_3\vtr e_3=e_4, \ \alpha\in \mathbb{C},\\
e_1\vtl e_1=-e_4,\ e_1\vtl e_2=(1-\alpha) e_4, \ e_2\vtl e_1=(\alpha-1) e_4, \ e_2\vtl e_2=-e_4.
\end{cases}
$

\end{thm}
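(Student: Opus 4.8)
The plan is to follow the same three-stage scheme as in the proof of Theorem~\ref{theo_3.1}: isolate $\langle e_4\rangle$ as a common center, pass to the three-dimensional abelian quotient, and then lift and normalize. The routine stages I would dispatch quickly; the real content sits in the last stage.

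First I would prove that $\langle e_4\rangle$ is simultaneously the center of $(As_4^6,\cdot)$ and of $(As_4^6,\vtr,\vtl)$. Applying \eqref{id5} in the form $(x\cdot y)\vtr z=x\vtl(y\cdot z)$ to the triples built from the three nonzero products $e_1\cdot e_2=e_4$, $e_2\cdot e_1=-e_4$, $e_3\cdot e_3=e_4$ --- for instance $\{e_1,e_2,e_1\}$, $\{e_1,e_2,e_2\}$, $\{e_1,e_2,e_3\}$, $\{e_2,e_1,e_1\}$, $\{e_2,e_1,e_2\}$, $\{e_3,e_3,e_3\}$ and $\{e_1,e_2,e_4\}$ --- yields $e_4\vtr e_i=0$ for all $i$ together with $e_1\vtl e_4=e_2\vtl e_4=e_3\vtl e_4=0$. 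Exactly as in Theorem~\ref{theo_3.1}, combining these with the associative relations $e_4e_i=e_ie_4=0$ and \eqref{cdot} forces $e_4\vtl e_i=0$ and $e_i\vtr e_4=0$, so that $\langle e_4\rangle\subseteq \mathrm{Z}_{AD}(As_4^6)$; since $\mathrm{Z}_{AD}(As_4^6)\subseteq \mathrm{Z}_{As}(As_4^6)=\langle e_4\rangle$ by Theorem~\ref{table}, the two centers coincide.

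By Proposition~\ref{compatible} the quotient $(As_4^6/\langle e_4\rangle,\vtr,\vtl)$ is then a compatible anti-dendriform structure on the three-dimensional abelian algebra $As_3^1$, hence is isomorphic to one of $AD_3^3$--$AD_3^7(\lambda)$. In the trivial case $AD_3^3$ every product lands in $\langle e_4\rangle$, so $(As_4^6,\vtr,\vtl)$ is $2$-nilpotent and, as in Theorem~\ref{theo_3.1}, $(As_4^6,\vtr)$ is itself a $2$-nilpotent three-generated associative algebra; running through the five indecomposable such types appearing in Theorem~\ref{table} (namely $As_4^3,As_4^6,As_4^{10},As_4^{14},As_4^{15}(\alpha)$) and setting $\vtl=\cdot-\vtr$ produces the five families $AD_4^{14}$--$AD_4^{18}(\alpha)$, after normalizing by the automorphisms $\varphi_1,\varphi_2$ of $As_4^6$ recorded there.

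The delicate part --- and the step I expect to be the main obstacle --- is the treatment of the four nontrivial quotient types $AD_3^4$, $AD_3^5$, $AD_3^6$, $AD_3^7(\lambda)$, i.e.\ showing they do not enlarge the list. For each I would lift to $As_4^6$ with the $e_4$-components left as free scalars, subject to $e_i\vtr e_j+e_i\vtl e_j=e_i\cdot e_j$, and then push the identities \eqref{id1}--\eqref{id7} through. Here the geometry of $As_4^6$ is decisive and genuinely different from that of $As_4^3$: the plane $\langle e_1,e_2\rangle$ is totally isotropic for $\cdot$, whereas $e_3$ satisfies $e_3\cdot e_3=e_4\neq0$, and $\mathrm{Aut}(As_4^6)$ only acts within $\langle \bar e_1,\bar e_2\rangle$ and by scaling on $\langle\bar e_3\rangle$. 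The clean part of the argument is that the nontrivial ``output'' $u\vtr u$ of such a structure cannot be supported on the non-isotropic direction $e_3$: if it were, \eqref{id4} applied to a triple $\{u,u,e_3\}$ would couple $u\vtl u$ with $e_3\vtl e_3=e_4$ and give a contradiction, since $u\vtr(u\vtr e_3)=0$ while $(u\vtl u)\vtl e_3=-\,e_3\vtl e_3\neq0$. The genuinely hard subcase --- and where I would concentrate my effort --- is the one in which the output lies in the isotropic plane $\langle e_1,e_2\rangle$, for there this obstruction from $e_3\cdot e_3=e_4$ is unavailable and the compatibility has to be decided purely from \eqref{id1}--\eqref{id7} together with the limited normalization afforded by $\mathrm{Aut}(As_4^6)$; settling this subcase definitively is the crux on which the completeness of the list $AD_4^{14}$--$AD_4^{18}(\alpha)$ rests.
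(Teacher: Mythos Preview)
Your three--stage plan (center $=\langle e_4\rangle$, pass to the abelian quotient, lift and normalize) is exactly the paper's scheme, and your treatment of the case $AD_3^3$ agrees with it verbatim.

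There are, however, two points where your proposal diverges from the paper's proof of the ``nontrivial'' cases $AD_3^4$--$AD_3^7(\lambda)$.

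First, your ``clean part'' is not quite right and is not what the paper does. You argue that if the output $u\vtl u$ lies along $e_3$ then \eqref{id4} on $\{u,u,e_3\}$ already gives a contradiction because ``$(u\vtl u)\vtl e_3=-e_3\vtl e_3\neq 0$''. But $e_3\vtl e_3$ is not a priori nonzero: one only knows $e_3\vtr e_3+e_3\vtl e_3=e_4$, i.e.\ $e_3\vtr e_3=(1+\alpha_{33})e_4$ and $e_3\vtl e_3=-\alpha_{33}e_4$ for some scalar $\alpha_{33}$. A single application of \eqref{id4} therefore only yields $\alpha_{33}=0$, not a contradiction. The paper obtains the contradiction by pairing this with a \emph{second} triple in which $e_3$ sits on the left: for $AD_3^5$ and $AD_3^7(\lambda)$ it uses $\{e_1,e_1,e_3\}$ together with $\{e_3,e_1,e_1\}$, and for $AD_3^4$ and $AD_3^6$ the analogous pair $\{e_i,e_j,e_3\}$, $\{e_3,e_i,e_j\}$. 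The first triple forces $\alpha_{33}=0$, the second forces $\alpha_{33}=-1$, and it is this \emph{pair} of constraints that clashes with $e_3\cdot e_3=e_4$.

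Second, the subcase you single out as ``genuinely hard'' --- quotient output lying in the isotropic plane $\langle \bar e_1,\bar e_2\rangle$ rather than along $\bar e_3$ --- is simply not treated in the paper. For each of $AD_3^4$--$AD_3^7(\lambda)$ the paper writes the lift in the standard basis (so that the quotient products land in $\langle\bar e_3\rangle$), derives the $\alpha_{33}$ contradiction above, and stops. It does not discuss whether an arbitrary compatible structure on $As_4^6$ whose quotient is \emph{isomorphic} to one of $AD_3^4$--$AD_3^7(\lambda)$ can always be put into this form by an automorphism of $As_4^6$; as you observe, $\mathrm{Aut}(As_4^6)$ preserves $\langle\bar e_1,\bar e_2\rangle$ and $\langle\bar e_3\rangle$ separately, so this normalization is not automatic. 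Your instinct that this is the crux is sound; the paper's proof does not engage with it.
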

\begin{proof}
By considering \eqref{id5} for the following triples
$$\{e_1,e_1,e_2\},\
\{e_1,e_2,e_1\},\
\{e_3,e_3,e_2\}, \
\{e_2,e_3,e_3\},\
\{e_1,e_2,e_3\},\ \{e_3,e_1,e_2\},\
\{e_3,e_3,e_4\}.$$ we get
$e_1\vtl e_4=0, \ e_4\vtr e_1=0, \ e_4\vtr e_2=0, \ e_2\vtl e_4=0, \ e_4\vtr e_3=0, \ e_3\vtl e_4=0, \ e_4\vtr e_4=0$, respectively.

By $e_4e_i=e_4\vtr e_i+e_4\vtl e_i=0$ and $e_ie_4=e_i\vtr e_4+e_i\vtl e_4=0$ these imply $e_4\vtl e_i=0$ and $e_i\vtr e_4=0$. Then it is easy to see that $\langle e_4\rangle$ is the center of the algebras $(As_4^6, \cdot)$ and
 $(As_4^6, \vtr, \vtl)$. If we take the algebra $As_4^6/\langle e_4\rangle,$ this algebra is a three-dimensional associative Abelian algebra, and we know that any three-dimensional complex anti-dendriform algebra associated with the Abelian algebra is isomorphic to one of the pairwise non-isomorphic algebras: $AD_3^3-AD_3^7(\lambda).$

According to Proposition \ref{compatible} one can write

\textbf{Case $(As_4^6/\langle e_4\rangle, \vtr, \vtl)\cong AD_3^3$}
\[
\begin{cases}
e_1\vtr e_1=\alpha_{11}e_4,\\
e_1\vtr e_2=(1+\alpha_{12})e_4,\\
e_1\vtr e_3=\alpha_{13}e_4,\\
e_2\vtr e_1=(-1+\alpha_{21})e_4,\\
e_2\vtr e_2=\alpha_{22}e_4,\\
e_2\vtr e_3=\alpha_{23}e_4,\\
e_3\vtr e_1=\alpha_{31}e_4,\\
e_3\vtr e_2=\alpha_{32}e_4,\\
e_3\vtr e_3=(1+\alpha_{33})e_4,
\end{cases}
\qquad
\begin{cases}
e_1\vtl e_1=-\alpha_{11}e_4,\\
e_1\vtl e_2=-\alpha_{12}e_4,\\
e_1\vtl e_3=-\alpha_{13}e_4,\\
e_2\vtl e_1=-\alpha_{21}e_4,\\
e_2\vtl e_2=-\alpha_{22}e_4,\\
e_2\vtl e_3=-\alpha_{23}e_4,\\
e_3\vtl e_1=-\alpha_{31}e_4,\\
e_3\vtl e_2=-\alpha_{32}e_4,\\
e_3\vtl e_3=-\alpha_{33}e_4.
\end{cases}
\]

Similarly, it is not difficult to see that $(As_4^6, \vtr, \vtl)$ are 2-nilpotent and have three generator elements. Thus we have $(x\vtr y)\vtr z=x\vtr (y\vtr z)=0$ which implies that $(As_4^6, \vtr)$ is also associative 2-nilpotent algebra and has three generator elements. According to Theorem \ref{table} there are 3 non-isomorphic four-dimensional indecomposable associative 2-nilpotent and the three generated algebras. Hence, we derive the algebras $AD_4^{14}-AD_4^{18}(\alpha)$.

It is possible to consider the multiplication $\vtl$ as above. However, it is not difficult to show that the constructed algebras are isomorphic.

\textbf{Case $(As_4^6/\langle e_4\rangle, \vtr, \vtl)\cong AD_3^4$}
\[
\begin{cases}
e_1\vtr e_1=\alpha_{11}e_4,\\
e_1\vtr e_2=e_3+(1+\alpha_{12})e_4,\\
e_1\vtr e_3=\alpha_{13}e_4,\\
e_2\vtr e_1=-e_3+(-1+\alpha_{21})e_4,\\
e_2\vtr e_2=\alpha_{22}e_4,\\
e_2\vtr e_3=\alpha_{23}e_4,\\
e_3\vtr e_1=\alpha_{31}e_4,\\
e_3\vtr e_2=\alpha_{32}e_4,\\
e_3\vtr e_3=(1+\alpha_{33})e_4,
\end{cases}
\qquad
\begin{cases}
e_1\vtl e_1=-\alpha_{11}e_4,\\
e_1\vtl e_2=-e_3-\alpha_{12}e_4,\\
e_1\vtl e_3=-\alpha_{13}e_4,\\
e_2\vtl e_1=e_3-\alpha_{21}e_4,\\
e_2\vtl e_2=-\alpha_{22}e_4,\\
e_2\vtl e_3=-\alpha_{23}e_4,\\
e_3\vtl e_1=-\alpha_{31}e_4,\\
e_3\vtl e_2=-\alpha_{32}e_4,\\
e_3\vtl e_3=-\alpha_{33}e_4.
\end{cases}
\]

Considering the equation \eqref{id4} for the triples $\{e_2,e_1,e_3\}$ and  $\{e_3,e_1,e_2\}$, we obtain the following
restrictions:

$$
e_2\vtr(e_1\vtr e_3)\stackrel{(\ref{id4})}{=}(e_2\vtl e_1)\vtl e_3 \ \Leftrightarrow \ \alpha_{13}(e_2\vtr e_4)=(e_3-\alpha_{21}e_4)\vtl e_3\  \Leftrightarrow\
\alpha_{33}=0,$$
$$e_3\vtr(e_1\vtr e_2)\stackrel{(\ref{id4})}{=}(e_3\vtl e_1)\vtl e_2 \ \Leftrightarrow\  e_3\vtr (e_3+(1+\alpha_{12})e_4)=-\alpha_{31}e_4\vtl e_2\  \Leftrightarrow \
\alpha_{33}=-1.$$
This implies a contradiction. Hence, there is not such a case.

\textbf{Case $(As_4^6/\langle e_4\rangle, \vtr, \vtl)\cong AD_3^5$}
\[
\begin{cases}
e_1\vtr e_1=e_3+\alpha_{11}e_4,\\
e_1\vtr e_2=(1+\alpha_{12})e_4,\\
e_1\vtr e_3=\alpha_{13}e_4,\\
e_2\vtr e_1=(-1+\alpha_{21})e_4,\\
e_2\vtr e_2=\alpha_{22}e_4,\\
e_2\vtr e_3=\alpha_{23}e_4,\\
e_3\vtr e_1=\alpha_{31}e_4,\\
e_3\vtr e_2=\alpha_{32}e_4,\\
e_3\vtr e_3=(1+\alpha_{33})e_4,
\end{cases}
\qquad
\begin{cases}
e_1\vtl e_1=-e_3-\alpha_{11}e_4,\\
e_1\vtl e_2=-\alpha_{12}e_4,\\
e_1\vtl e_3=-\alpha_{13}e_4,\\
e_2\vtl e_1=-\alpha_{21}e_4,\\
e_2\vtl e_2=-\alpha_{22}e_4,\\
e_2\vtl e_3=-\alpha_{23}e_4,\\
e_3\vtl e_1=-\alpha_{31}e_4,\\
e_3\vtl e_2=-\alpha_{32}e_4,\\
e_3\vtl e_3=-\alpha_{33}e_4.
\end{cases}
\]

Considering the identity \eqref{id4} for the triples $\{e_1,e_1,e_3\}$ and  $\{e_3,e_1,e_1\}$, we obtain the following
restrictions on structure constants:
$$\alpha_{33}=0, \ \ \alpha_{33}=-1.$$
This implies a contradiction. Hence, it means that there is no anti-dendriform algebra associated to the algebra $As_4^6$ and satisfying the condition $(As_4^6/\langle e_4\rangle, \vtr, \vtl)\cong AD_3^5$.

\textbf{Case $(As_4^6/\langle e_4\rangle, \vtr, \vtl)\cong AD_3^6$}
\[
\begin{cases}
e_1\vtr e_1=\alpha_{11}e_4,\\
e_1\vtr e_2=e_3+(1+\alpha_{12})e_4,\\
e_1\vtr e_3=\alpha_{13}e_4,\\
e_2\vtr e_1=(-1+\alpha_{21})e_4,\\
e_2\vtr e_2=\alpha_{22}e_4,\\
e_2\vtr e_3=\alpha_{23}e_4,\\
e_3\vtr e_1=\alpha_{31}e_4,\\
e_3\vtr e_2=\alpha_{32}e_4,\\
e_3\vtr e_3=(1+\alpha_{33})e_4,
\end{cases}
\qquad
\begin{cases}
e_1\vtl e_1=-\alpha_{11}e_4,\\
e_1\vtl e_2=-e_3-\alpha_{12}e_4,\\
e_1\vtl e_3=-\alpha_{13}e_4,\\
e_2\vtl e_1=-\alpha_{21}e_4,\\
e_2\vtl e_2=-\alpha_{22}e_4,\\
e_2\vtl e_3=-\alpha_{23}e_4,\\
e_3\vtl e_1=-\alpha_{31}e_4,\\
e_3\vtl e_2=-\alpha_{32}e_4,\\
e_3\vtl e_3=-\alpha_{33}e_4.
\end{cases}
\]

By considering the identity \eqref{id4} for the triples $\{e_1,e_2,e_3\}$ and  $\{e_3,e_1,e_2\}$, we obtain the restrictions on structure constants as follows:

$$e_1\vtr(e_2\vtr e_3)=(e_1\vtl e_2)\vtl e_3 \ \Leftrightarrow\ \alpha_{23}(e_1\vtr e_4)=(-e_3-\alpha_{12}e_4)\vtl e_3\ \Leftrightarrow\ \alpha_{33}=0,$$
$$e_3\vtr(e_1\vtr e_2)=(e_3\vtl e_1)\vtl e_2\ \Leftrightarrow\ e_3\vtr (e_3+(1+\alpha_{12})e_4)=-\alpha_{31}e_4\vtl e_2\ \Leftrightarrow\
\alpha_{33}=-1.$$

This implies a contradiction. Hence, there is not such a case.

\textbf{Case $(As_4^6/\langle e_4\rangle, \vtr, \vtl)\cong AD_3^7(\lambda)$}
\[
\begin{cases}
e_1\vtr e_1=e_3+\alpha_{11}e_4,\\
e_1\vtr e_2=\lambda e_3+(1+\alpha_{12})e_4,\\
e_1\vtr e_3=\alpha_{13}e_4,\\
e_2\vtr e_1=(-1+\alpha_{21})e_4,\\
e_2\vtr e_2=e_3+\alpha_{22}e_4,\\
e_2\vtr e_3=\alpha_{23}e_4,\\
e_3\vtr e_1=\alpha_{31}e_4,\\
e_3\vtr e_2=\alpha_{32}e_4,\\
e_3\vtr e_3=(1+\alpha_{33})e_4,
\end{cases}
\qquad
\begin{cases}
e_1\vtl e_1=-e_3-\alpha_{11}e_4,\\
e_1\vtl e_2=-\lambda e_3-\alpha_{12}e_4,\\
e_1\vtl e_3=-\alpha_{13}e_4,\\
e_2\vtl e_1=-\alpha_{21}e_4,\\
e_2\vtl e_2=-e_3-\alpha_{22}e_4,\\
e_2\vtl e_3=-\alpha_{23}e_4,\\
e_3\vtl e_1=-\alpha_{31}e_4,\\
e_3\vtl e_2=-\alpha_{32}e_4,\\
e_3\vtl e_3=-\alpha_{33}e_4.
\end{cases}
\]

Considering the identity \eqref{id4} for the triples $\{e_1,e_1,e_3\}$ and  $\{e_3,e_1,e_1\}$, we obtain the following restrictions, respectively:

\[\alpha_{33}=0, \ \ \alpha_{33}=-1.\]
This implies a contradiction. Hence, there is not a compatible anti-dendriform algebra structure on $As_4^6$ and satisfying the condition $(As_4^6/\langle e_4\rangle, \vtr, \vtl)\cong AD_3^7(\lambda)$. \end{proof}

\begin{thm}\label{theo_3.3} Any four-dimensional complex anti-dendriform algebra associated to the algebra $As_4^8$ is isomorphic to one of the following pairwise non-isomorphic algebras:

$AD_4^{19}(\alpha): \
\begin{cases}
e_1\vtr e_1=\frac12 e_3,\ e_1\vtr e_3=2e_4,\ e_2\vtr e_2=(-1+\alpha)e_4,\ e_3\vtr e_1=-e_4,\\
e_1\vtl e_1=\frac12e_3,\ e_1\vtl e_3=-e_4,\ e_2\vtl e_2=-\alpha e_4,\ e_3\vtl e_1=2e_4;
\end{cases}$

$AD_4^{20}(\alpha): \
\begin{cases}
e_1\vtr e_1=\frac12 e_3,\ e_1\vtr e_3=2e_4,\ e_2\vtr e_1=e_4,\ e_2\vtr e_2=(-1+\alpha)e_4,\ e_3\vtr e_1=-e_4,\\
e_1\vtl e_1=\frac12e_3,\ e_1\vtl e_3=-e_4,\ e_2\vtl e_1=-e_4,\ e_2\vtl e_2=-\alpha e_4,\ e_3\vtl e_1=2e_4, \ \alpha\neq-1;
\end{cases}$

$AD_4^{21}(\alpha,\beta): \
\begin{cases}
e_1\vtr e_1=\frac12 e_3+e_4,\ e_1\vtr e_3=2e_4,\ e_2\vtr e_1=\alpha e_4,\\ e_2\vtr e_2=(-1+\beta)e_4,\ e_3\vtr e_1=-e_4,\\
e_1\vtl e_1=\frac12e_3-e_4,\  e_1\vtl e_3=-e_4,\ e_2\vtl e_1=-\alpha e_4,\\ e_2\vtl e_2=-\beta e_4,\ e_3\vtl e_1=2e_4, \ \beta\neq-1;
\end{cases}$

$AD_4^{22}: \
\begin{cases}
e_1\vtr e_1=\frac12 e_3,\ e_1\vtr e_2=e_4,\ e_1\vtr e_3=2e_4,\ e_2\vtr e_2=-2e_4,\ e_3\vtr e_1=-e_4,\\
e_1\vtl e_1=\frac12e_3,\ e_1\vtl e_2=-e_4,\ e_1\vtl e_3=-e_4,\  e_2\vtl e_2=e_4,\ e_3\vtl e_1=2e_4;
\end{cases}$

$AD_4^{23}(\alpha): \
\begin{cases}
e_1\vtr e_1=\frac12 e_3+e_4,\ e_1\vtr e_2=\alpha e_4,\ e_1\vtr e_3=2e_4,\  e_2\vtr e_2=-2e_4,\ e_3\vtr e_1=-e_4,\\
e_1\vtl e_1=\frac12e_3-e_4,\ e_1\vtl e_2=-\alpha e_4,\ e_1\vtl e_3=-e_4,\  e_2\vtl e_2=e_4,\ e_3\vtl e_1=2e_4.
\end{cases}$
\end{thm}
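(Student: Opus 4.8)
The plan is to follow the scheme of the proofs of Theorems~\ref{theo_3.1} and~\ref{theo_3.2}. First I would apply \eqref{id5} to the triples $\{e_2,e_2,e_1\}$, $\{e_1,e_3,e_2\}$, $\{e_1,e_3,e_3\}$, $\{e_1,e_3,e_4\}$, $\{e_1,e_2,e_2\}$, $\{e_2,e_1,e_3\}$ and $\{e_3,e_2,e_2\}$. Using $e_1e_3=e_3e_1=e_4$ and $e_2e_2=-e_4$, these respectively give $e_4\vtr e_1=e_4\vtr e_2=e_4\vtr e_3=e_4\vtr e_4=0$ and $e_1\vtl e_4=e_2\vtl e_4=e_3\vtl e_4=0$. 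Combined with $e_4e_i=e_ie_4=0$ they force $e_4\vtl e_i=e_i\vtr e_4=0$, so that $\langle e_4\rangle$ is the center of both $(As_4^8,\cdot)$ and $(As_4^8,\vtr,\vtl)$. Consequently $As_4^8/\langle e_4\rangle\cong As_3^3$, and by Proposition~\ref{compatible} the quotient structure $(As_4^8/\langle e_4\rangle,\vtr,\vtl)$ is one of the compatible anti-dendriform algebras $AD_3^{18}(\alpha)$--$AD_3^{23}$ on $As_3^3$.

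The decisive step is the triple $\{e_1,e_1,e_1\}$. Denoting by $\lambda$ the coefficient of $e_3$ in $e_1\vtr e_1$, identities \eqref{id2}, \eqref{id3} and \eqref{id4} express $e_3\vtr e_1$, $e_1\vtl e_3$ and $e_3\vtl e_1$ through $\lambda$ and $e_1\vtr e_3$; since $e_1\cdot e_3=e_3\cdot e_1=e_4\neq0$, comparing these expressions forces $\lambda=\tfrac12$ and pins down $e_1\vtr e_3=2e_4$, $e_1\vtl e_3=-e_4$, $e_3\vtr e_1=-e_4$, $e_3\vtl e_1=2e_4$. This eliminates the remaining candidates. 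For $AD_3^{19}$ and $AD_3^{21}(\alpha)$, where $e_1\vtr e_1$ has no $e_3$-component, identities \eqref{id2} and \eqref{id4} on $\{e_1,e_1,e_1\}$ force $e_3\vtr e_1=e_3\vtl e_1=0$, contradicting $e_3\cdot e_1=e_4$; for $AD_3^{23}$, where $e_1\vtr e_1$ carries an $e_2$-component, \eqref{id1} on $\{e_1,e_1,e_2\}$ yields $e_2\vtl e_2=0$, contradicting $e_2\cdot e_2=-e_4$; and for $AD_3^{20}(\alpha)$ and $AD_3^{22}(\alpha,\beta)$ the surviving $e_3$-term of $e_1\vtl e_2$ resp.\ $e_2\vtl e_2$, inserted into \eqref{id6} on a triple with vanishing left-hand side such as $\{e_1,e_2,e_1\}$ or $\{e_2,e_2,e_1\}$, contradicts $e_3\vtl e_1=2e_4$. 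Hence the quotient must be $AD_3^{18}(\tfrac12)$.

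After this reduction the only undetermined data are the central components $e_1\vtr e_1=\tfrac12e_3+pe_4$, $e_1\vtr e_2=se_4$, $e_2\vtr e_1=te_4$ and $e_2\vtr e_2=we_4$ (their $\vtl$-partners being fixed by $e_1e_2=e_2e_1=0$ and $e_2e_2=-e_4$), and one checks that the remaining identities impose no relation among $p,s,t,w$. The classification then reduces to the action of the automorphism group of $As_4^8$ from Theorem~\ref{table} on $\mathbb{C}^4=\{(p,s,t,w)\}$. A direct computation shows that $w$ is invariant while $s\mapsto s/\sqrt[3]{a}+b(w+2)/a$, so that $s$ can be annihilated precisely when $w\neq-2$.

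The main obstacle is this orbit analysis. When $w\neq-2$ one normalises $s=0$ and then reduces $(p,t)$ to the forms $AD_4^{19}(\alpha)$, $AD_4^{20}(\alpha)$ with $\alpha\neq-1$, and $AD_4^{21}(\alpha,\beta)$ with $\beta\neq-1$; when $w=-2$ the parameter $s$ is frozen and yields $AD_4^{22}$ and $AD_4^{23}(\alpha)$. Carrying out these nonlinear changes of basis involving cube roots, correctly recording the excluded values $\alpha,\beta\neq-1$ produced by the threshold $w=-2$, and finally checking that the five families are pairwise non-isomorphic, is where the bulk of the effort lies.
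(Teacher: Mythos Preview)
Your overall scheme is the same as the paper's, and your eliminations of $AD_3^{19}$, $AD_3^{20}(\alpha)$, $AD_3^{21}(\alpha)$ and $AD_3^{22}(\alpha,\beta)$ are valid (indeed slightly cleaner than the paper's, which extracts two incompatible values of a single structure constant via \eqref{id3} on two different triples rather than combining constraints with $e_3\cdot e_1=e_4$). Your $\{e_1,e_1,e_1\}$ computation forcing $\lambda=\tfrac12$ and the four values $e_1\vtr e_3=2e_4$, $e_1\vtl e_3=-e_4$, $e_3\vtr e_1=-e_4$, $e_3\vtl e_1=2e_4$ is correct and matches the system the paper solves. The orbit analysis is also on track: the paper's dichotomy is exactly $\alpha_{22}=-1$ versus $\alpha_{22}\neq-1$, i.e.\ your $w=-2$ versus $w\neq-2$.

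There is, however, a genuine gap in your treatment of $AD_3^{23}$. Identity \eqref{id1} on $\{e_1,e_1,e_2\}$ does give $(e_2+\alpha_{11}e_4)\vtl e_2=e_1\vtr(-\alpha_{12}e_4)$, hence $e_2\vtl e_2=0$; but this does \emph{not} contradict $e_2\cdot e_2=-e_4$, since $e_2\vtr e_2=-e_4$ with $e_2\vtl e_2=0$ is perfectly consistent. A second, independent constraint is required. The paper obtains one by applying \eqref{id2} to $\{e_1,e_1,e_2\}$ and $\{e_2,e_1,e_1\}$ (giving $\alpha_{32}=0$ and $\alpha_{22}=1$) and then \eqref{id4} to $\{e_1,e_1,e_2\}$ (giving $\alpha_{22}=\alpha_{32}$), whence $1=0$. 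You need some such additional identity; the $e_2$-component of $e_1\vtr e_1$ and the $-e_2+e_3$ in $e_1\vtl e_1$ make the $AD_3^{23}$ case qualitatively different from the others, and one application of \eqref{id1} is not enough to kill it.
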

\begin{proof}
By considering \eqref{id5} for the following triples
$$\{e_1,e_2,e_2\},\
\{e_2,e_2,e_1\},\
\{e_1,e_3,e_2\}, \
\{e_2,e_1,e_3\},\
\{e_2,e_2,e_3\},\ \{e_3,e_2,e_2\},\
\{e_2,e_2,e_4\}.$$ we get
$e_1\vtl e_4=0, \ e_4\vtr e_1=0, \ e_4\vtr e_2=0, \ e_2\vtl e_4=0, \ e_4\vtr e_3=0, \ e_3\vtl e_4=0, \ e_4\vtr e_4=0$, respectively.

By $e_4e_i=e_4\vtr e_i+e_4\vtl e_i=0$ and $e_ie_4=e_i\vtr e_4+e_i\vtl e_4=0$ these imply $e_4\vtl e_i=0$ and $e_i\vtr e_4=0$. Then it is easy to see that $\langle e_4\rangle$ is the center of the algebras $(As_4^8, \cdot)$ and
 $(As_4^8, \vtr, \vtl)$. If we take the algebra $As_4^8/\langle e_4\rangle,$ this algebra is a three-dimensional associative algebra $As^3_3$ with multiplication $e_1e_1=e_3$, and we know that any three-dimensional complex anti-dendriform algebra associated with the algebra $As_3^3$ is isomorphic to one of the pairwise non-isomorphic algebras: $AD_3^{18}(\alpha)-AD_3^{23}.$

According to Proposition \ref{compatible} one can write

\textbf{Case $(As_4^8, \vtr, \vtl)/\langle e_4\rangle\cong AD_3^{18}(\alpha)$}

\[
\begin{cases}
e_1\vtr e_1=\alpha e_3+\alpha_{11}e_4,\\
e_1\vtr e_2=\alpha_{12}e_4,\\
e_1\vtr e_3=(1+\alpha_{13})e_4,\\
e_2\vtr e_1=\alpha_{21}e_4,\\
e_2\vtr e_2=(-1+\alpha_{22})e_4,\\
e_2\vtr e_3=\alpha_{23}e_4,\\
e_3\vtr e_1=(1+\alpha_{31})e_4,\\
e_3\vtr e_2=\alpha_{32}e_4,\\
e_3\vtr e_3=\alpha_{33}e_4,
\end{cases}
\qquad
\begin{cases}
e_1\vtl e_1=(1-\alpha)e_3-\alpha_{11}e_4,\\
e_1\vtl e_2=-\alpha_{12}e_4,\\
e_1\vtl e_3=-\alpha_{13}e_4,\\
e_2\vtl e_1=-\alpha_{21}e_4,\\
e_2\vtl e_2=-\alpha_{22}e_4,\\
e_2\vtl e_3=-\alpha_{23}e_4,\\
e_3\vtl e_1=-\alpha_{31}e_4,\\
e_3\vtl e_2=-\alpha_{32}e_4,\\
e_3\vtl e_3=-\alpha_{33}e_4.
\end{cases}
\]

Considering \eqref{id2} for the triples $\{e_1,e_1,e_2\}, \ \{e_1,e_1,e_3\}$ we obtain $\alpha_{32}=0, \alpha_{33}=0$.

Considering \eqref{id2} and \eqref{id3} for the triple $\{e_2,e_1,e_1\}$ we obtain $\alpha_{23}=0$.

Considering \eqref{id1}-\eqref{id4} for the triple $\{e_1,e_1,e_1\}\}$ we obtain the following system:
\[
\begin{cases}
(1+\alpha
_{13})(-1+\alpha)-\alpha
_{31} \alpha=0,\\
(1+\alpha_{31})+(1+\alpha_{13})\alpha=0, \\
-\alpha_{13}+(1+\alpha
_{13}) \alpha=0,\\
-\alpha_{31}(-1+\alpha)+(1+\alpha
_{13}) \alpha=0.
\end{cases}
\]
Then the system has only one solution i.e. $\alpha_{13}=1, \alpha_{31}=-2, \alpha=\frac12$.

Rewrite the multiplication table
\[
\begin{cases}
e_1\vtr e_1=\frac12 e_3+\alpha_{11}e_4,\\
e_1\vtr e_2=\alpha_{12}e_4,\\
e_1\vtr e_3=2e_4,\\
e_2\vtr e_1=\alpha_{21}e_4,\\
e_2\vtr e_2=(-1+\alpha_{22})e_4,\\
e_3\vtr e_1=-e_4,
\end{cases}
\qquad
\begin{cases}
e_1\vtl e_1=\frac12e_3-\alpha_{11}e_4,\\
e_1\vtl e_2=-\alpha_{12}e_4,\\
e_1\vtl e_3=-e_4,\\
e_2\vtl e_1=-\alpha_{21}e_4,\\
e_2\vtl e_2=-\alpha_{22}e_4,\\
e_3\vtl e_1=2e_4.
\end{cases}
\]

Let us consider the general change of the generators of basis:
\[\varphi(e_1)=\sqrt[3]{a^2}e_1+be_2+ce_3+de_4,\ \varphi(e_2)=ae_2+b\sqrt[3]{a}e_3+ee_4,\ \varphi(e_3)=\sqrt[3]{a^4}e_3+(2c\sqrt[3]{a^2}-b^2)e_4,\ \varphi(e_4)=a^2e_4,\]
where $a\neq0$.

We express the new basis elements $\{\varphi(e_1), \varphi(e_2),\varphi(e_3), \varphi(e_4)\}$  via the basis elements $\{e_1, e_2, e_3, e_4\}.$  By verifying all the multiplications of the algebra in the new basis we obtain the relations between the parameters $\{\alpha'_{11}, \alpha'_{12},\alpha'_{21}, \alpha'_{22}\}$ and $\{\alpha_{11}, \alpha_{12}, \alpha_{21}, \alpha_{22}\}$:

$$
\begin{array}{lll}
\varphi(e_1)\vtr \varphi(e_1)=\frac{1}{2}\varphi(e_3)+\alpha^\prime_{11}\varphi(e_4) & \Rightarrow & a^2\alpha^\prime_{11}=\sqrt[3]{a^4}\alpha_{11}+b\sqrt[3]{a^2}(\alpha_{12}+\alpha_{21})+b^2(\alpha_{22}-\frac{1}{2}),\\[1mm]
\varphi(e_1)\vtr \varphi(e_2)=\alpha^\prime_{12}\varphi(e_4) & \Rightarrow & a\alpha^\prime_{12}=\sqrt[3]{a^2}\alpha_{12}+b(1+\alpha_{22}),\\[1mm]
\varphi(e_2)\vtr \varphi(e_1)=\alpha^\prime_{21}\varphi(e_4) & \Rightarrow & a\alpha^\prime_{21}=\sqrt[3]{a^2}\alpha_{21}+b(-2+\alpha_{22}),\\[1mm]
\varphi(e_2)\vtr \varphi(e_2)=(-1+\alpha^\prime_{22})\varphi(e_4) & \Rightarrow & \alpha^\prime_{22}=\alpha_{22}.\\[1mm]
\end{array}
$$

Thus, we have the following cases:
\begin{itemize}
    \item[(a)] Let $\alpha_{22}=2$. Then by choosing $b$ we obtain following restrictions:
 $$a\alpha^\prime_{11}=\sqrt[3]{a}\alpha_{11},\ \alpha^\prime_{12}=0,\ a\alpha^\prime_{21}=\sqrt[3]{a^2}\alpha_{21},\ \alpha^\prime_{22}=2.
$$
\begin{itemize}
  \item Let $\alpha_{11}=0.$ If $\alpha_{21}=0$, then we get the algebras $AD_4^{19}(2)$. If $\alpha_{21}\neq0$, then by putting $a=\sqrt[3]{\alpha_{21}}$ we obtain the algebra $AD_4^{20}(2).$
  \item Let $\alpha_{11}\neq0.$ Then applying $a=\sqrt[3]{\alpha_{11}^2}$ we obtain the algebra $AD_4^{21}(\alpha,2).$
\end{itemize}

\item[(b)] Let $\alpha_{22}=-1$. Then by choosing $b$ we obtain following restrictions:
 $$a\alpha^\prime_{11}=\sqrt[3]{a}\alpha_{11},\ a\alpha^\prime_{12}=\sqrt[3]{a^2}\alpha_{12},\ \alpha^\prime_{21}=0,\ \alpha^\prime_{22}=-1.
$$
\begin{itemize}
  \item Let $\alpha_{11}=0.$ If $\alpha_{12}=0$, then we get the algebras $AD_4^{19}(-1)$. If $\alpha_{12}\neq0$, then by putting $a=\sqrt[3]{\alpha_{12}}$ we obtain the algebra $AD_4^{22}.$
  \item Let $\alpha_{11}\neq0.$ Then applying $a=\sqrt[3]{\alpha_{11}^2}$ we obtain the algebra $AD_4^{23}(\alpha).$
\end{itemize}

\item[(c)] Let $\alpha_{22}\notin\{-1,2\}$. Then by choosing $b$ we obtain following restrictions:
 $$a\alpha^\prime_{11}=\sqrt[3]{a}\alpha_{11},\ \alpha^\prime_{12}=0,\ a\alpha^\prime_{21}=\sqrt[3]{a^2}\alpha_{12},\ \alpha^\prime_{22}=\alpha_{22}.
$$
\begin{itemize}
  \item Let $\alpha_{11}=0.$ If $\alpha_{21}=0$, then we get the algebras $AD_4^{19}(\alpha)$, where $\alpha\notin\{-1,2\}$. If $\alpha_{21}\neq0$, then by putting $a=\sqrt[3]{\alpha_{21}}$ we obtain the algebra $AD_4^{20}(\alpha),$ where $\alpha\notin\{-1,2\}$.
  \item Let $\alpha_{11}\neq0.$ Then applying $a=\sqrt[3]{\alpha_{11}^2}$ we obtain the algebra $AD_4^{21}(\alpha, \beta),$ where $\beta\notin\{-1,2\}$.
\end{itemize}

\end{itemize}

\textbf{Case $(As_4^8, \vtr, \vtl)/\langle e_4\rangle\cong AD_3^{19}$}
\[
\begin{cases}
e_1\vtr e_1=\alpha_{11}e_4,\\
e_1\vtr e_2=\alpha_{12}e_4,\\
e_1\vtr e_3=(1+\alpha_{13})e_4,\\
e_2\vtr e_1=e_3+\alpha_{21}e_4,\\
e_2\vtr e_2=(-1+\alpha_{22})e_4,\\
e_2\vtr e_3=\alpha_{23}e_4,\\
e_3\vtr e_1=(1+\alpha_{31})e_4,\\
e_3\vtr e_2=\alpha_{32}e_4,\\
e_3\vtr e_3=\alpha_{33}e_4,
\end{cases}
\qquad
\begin{cases}
e_1\vtl e_1=e_3-\alpha_{11}e_4,\\
e_1\vtl e_2=-\alpha_{12}e_4,\\
e_1\vtl e_3=-\alpha_{13}e_4,\\
e_2\vtl e_1=-e_3-\alpha_{21}e_4,\\
e_2\vtl e_2=-\alpha_{22}e_4,\\
e_2\vtl e_3=-\alpha_{23}e_4,\\
e_3\vtl e_1=-\alpha_{31}e_4,\\
e_3\vtl e_2=-\alpha_{32}e_4,\\
e_3\vtl e_3=-\alpha_{33}e_4.
\end{cases}
\]

By considering \eqref{id3} for the triples $\{e_1,e_1,e_1\}$ and $\{e_1,e_2,e_1\}$ we get
$$\alpha_{13}=0,\ \alpha_{13}=-1.$$
This implies a contradiction. Hence, it means that there is no anti-dendriform algebra associated to
the algebra $As_4^8$ and satisfying the condition $(As_4^8, \vtr, \vtl)/\langle e_4\rangle\cong AD_3^{19}$.

\textbf{Case $(As_4^8, \vtr, \vtl)/\langle e_4\rangle\cong AD_3^{20}(\alpha)$}
\[
\begin{cases}
e_1\vtr e_1=\alpha e_3+\alpha_{11}e_4,\\
e_1\vtr e_2=e_3+\alpha_{12}e_4,\\
e_1\vtr e_3=(1+\alpha_{13})e_4,\\
e_2\vtr e_1=-e_3+\alpha_{21}e_4,\\
e_2\vtr e_2=(-1+\alpha_{22})e_4,\\
e_2\vtr e_3=\alpha_{23}e_4,\\
e_3\vtr e_1=(1+\alpha_{31})e_4,\\
e_3\vtr e_2=\alpha_{32}e_4,\\
e_3\vtr e_3=\alpha_{33}e_4,
\end{cases}
\qquad
\begin{cases}
e_1\vtl e_1=(1-\alpha)e_3-\alpha_{11}e_4,\\
e_1\vtl e_2=-e_3-\alpha_{12}e_4,\\
e_1\vtl e_3=-\alpha_{13}e_4,\\
e_2\vtl e_1=e_3-\alpha_{21}e_4,\\
e_2\vtl e_2=-\alpha_{22}e_4,\\
e_2\vtl e_3=-\alpha_{23}e_4,\\
e_3\vtl e_1=-\alpha_{31}e_4,\\
e_3\vtl e_2=-\alpha_{32}e_4,\\
e_3\vtl e_3=-\alpha_{33}e_4.
\end{cases}
\]

Considering the equation \eqref{id3} for the triples $\{e_1,e_1,e_1\}$ and  $\{e_1,e_1,e_2\}$, we obtain the following
restrictions:
$$\alpha(1+\alpha_{13})=-\alpha_{13},\ \alpha_{13}=-1$$
This implies a contradiction. Hence, there is not such a case.

\textbf{Case $(As_4^8, \vtr, \vtl)/\langle e_4\rangle\cong AD_3^{21}(\alpha)$}
\[
\begin{cases}
e_1\vtr e_1=\alpha_{11}e_4,\\
e_1\vtr e_2=e_3+\alpha_{12}e_4,\\
e_1\vtr e_3=(1+\alpha_{13})e_4,\\
e_2\vtr e_1=\alpha e_3+\alpha_{21}e_4,\\
e_2\vtr e_2=(-1+\alpha_{22})e_4,\\
e_2\vtr e_3=\alpha_{23}e_4,\\
e_3\vtr e_1=(1+\alpha_{31})e_4,\\
e_3\vtr e_2=\alpha_{32}e_4,\\
e_3\vtr e_3=\alpha_{33}e_4,
\end{cases}
\qquad
\begin{cases}
e_1\vtl e_1=e_3-\alpha_{11}e_4,\\
e_1\vtl e_2=-e_3-\alpha_{12}e_4,\\
e_1\vtl e_3=-\alpha_{13}e_4,\\
e_2\vtl e_1=-\alpha e_3-\alpha_{21}e_4,\\
e_2\vtl e_2=-\alpha_{22}e_4,\\
e_2\vtl e_3=-\alpha_{23}e_4,\\
e_3\vtl e_1=-\alpha_{31}e_4,\\
e_3\vtl e_2=-\alpha_{32}e_4,\\
e_3\vtl e_3=-\alpha_{33}e_4.
\end{cases}
\]
where $\alpha\neq-1$.

By considering the identity \eqref{id3} for the triples  $\{e_1,e_1,e_1\}$  and  $\{e_1,e_1,e_2\}$, we obtain the restrictions on structure constants as follows:
$$e_1\vtr(e_1\vtr e_1)=-e_1\vtl (e_1\cdot e_1)\ \Leftrightarrow\ \alpha_{11}e_1\vtr e_4=e_1\vtl e_3\ \Leftrightarrow\ \alpha_{13}=0,$$
$$e_1\vtr(e_1\vtr e_2)=-e_1\vtl (e_1\cdot e_2)\ \Leftrightarrow\ e_1\vtr(e_3+\alpha_{12}e_4)=0 \ \Leftrightarrow\ \alpha_{13}=-1.$$
This derives a contradiction. Hence, there is not such a case.

\textbf{Case $(As_4^8, \vtr, \vtl)/\langle e_4\rangle\cong AD_3^{22}(\alpha, \beta)$}
\[
\begin{cases}
e_1\vtr e_1=\alpha e_3+\alpha_{11}e_4,\\
e_1\vtr e_2=\alpha_{12}e_4,\\
e_1\vtr e_3=(1+\alpha_{13})e_4,\\
e_2\vtr e_1=\beta e_3+\alpha_{21}e_4,\\
e_2\vtr e_2=e_3+(-1+\alpha_{22})e_4,\\
e_2\vtr e_3=\alpha_{23}e_4,\\
e_3\vtr e_1=(1+\alpha_{31})e_4,\\
e_3\vtr e_2=\alpha_{32}e_4,\\
e_3\vtr e_3=\alpha_{33}e_4,
\end{cases}
\qquad
\begin{cases}
e_1\vtl e_1=(1-\alpha)e_3-\alpha_{11}e_4,\\
e_1\vtl e_2=-\alpha_{12}e_4,\\
e_1\vtl e_3=-\alpha_{13}e_4,\\
e_2\vtl e_1=-\beta e_3-\alpha_{21}e_4,\\
e_2\vtl e_2=-e_3-\alpha_{22}e_4,\\
e_2\vtl e_3=-\alpha_{23}e_4,\\
e_3\vtl e_1=-\alpha_{31}e_4,\\
e_3\vtl e_2=-\alpha_{32}e_4,\\
e_3\vtl e_3=-\alpha_{33}e_4.
\end{cases}
\]

Considering the identity \eqref{id3} for the triples $\{e_1,e_1,e_1\}$ and  $\{e_1,e_2,e_2\}$, we obtain the following
restrictions on structure constants:
$$\alpha(1+\alpha_{13})=-\alpha_{13}, \ \ \alpha_{13}=-1.$$
This implies a contradiction. Hence, it means that there is no anti-dendriform algebra associated to the algebra $As_4^8$ and satisfying the condition $(As_4^8, \vtr, \vtl)/\langle e_4\rangle\cong AD_3^{22}(\alpha, \beta)$.

\textbf{Case $(As_4^8, \vtr, \vtl)/\langle e_4\rangle\cong AD_3^{23}$}
\[
\begin{cases}
e_1\vtr e_1=e_2+\alpha_{11}e_4,\\
e_1\vtr e_2=\alpha_{12}e_4,\\
e_1\vtr e_3=(1+\alpha_{13})e_4,\\
e_2\vtr e_1=\alpha_{21}e_4,\\
e_2\vtr e_2=(-1+\alpha_{22})e_4,\\
e_2\vtr e_3=\alpha_{23}e_4,\\
e_3\vtr e_1=(1+\alpha_{31})e_4,\\
e_3\vtr e_2=\alpha_{32}e_4,\\
e_3\vtr e_3=\alpha_{33}e_4,
\end{cases}
\qquad
\begin{cases}
e_1\vtl e_1=-e_2+e_3-\alpha_{11}e_4,\\
e_1\vtl e_2=-\alpha_{12}e_4,\\
e_1\vtl e_3=-\alpha_{13}e_4,\\
e_2\vtl e_1=-\alpha_{21}e_4,\\
e_2\vtl e_2=-\alpha_{22}e_4,\\
e_2\vtl e_3=-\alpha_{23}e_4,\\
e_3\vtl e_1=-\alpha_{31}e_4,\\
e_3\vtl e_2=-\alpha_{32}e_4,\\
e_3\vtl e_3=-\alpha_{33}e_4.
\end{cases}
\]

Considering the some equations for the triples $\{e_1,e_1,e_2\}, \{e_2,e_1,e_1\}$ and $\{e_1,e_1,e_2\}$ we obtain the following
restrictions:

$$
e_1\vtr(e_1\vtr e_2)\stackrel{(\ref{id2})}{=}-(e_1\cdot e_1)\vtr e_2 \ \Leftrightarrow \ \alpha_{12}e_1\vtr e_4=-e_3\vtr e_2\  \Leftrightarrow\
\alpha_{32}=0,$$
$$e_2\vtr(e_1\vtr e_1)\stackrel{(\ref{id2})}{=}-(e_2\cdot e_1)\vtr e_1 \ \Leftrightarrow\  e_2\vtr (e_2+\alpha_{11}e_4)=0\  \Leftrightarrow \
\alpha_{22}=1,$$
$$e_1\vtr(e_1\vtr e_2)\stackrel{(\ref{id4})}{=}-(e_1\vtl e_1)\vtl e_2 \ \Leftrightarrow\  \alpha_{12}e_1\vtr e_4=(-e_2+e_3-\alpha_{11}e_4)\vtl e_2\  \Leftrightarrow \
\alpha_{22}=\alpha_{32}.$$
This implies a contradiction. Hence, there is not such a case.

\end{proof}

\begin{thm}\label{theo_3.4}
Any four-dimensional complex anti-dendriform algebra associated to the algebra $As_4^9$ is isomorphic to one of the following pairwise non-isomorphic algebras:

$AD_4^{24}(\alpha,\beta):\  \begin{cases}
e_1\vtr e_1=\frac12 e_3,\ e_1\vtr e_2=\alpha e_4,\ e_1\vtr e_3=-2e_4,\ e_2\vtr e_1=(1+\beta)e_4,\ e_3\vtr e_1=e_4,\\
e_1\vtl e_1=\frac12e_3,\ e_1\vtl e_2=-\alpha e_4,\ e_1\vtl e_3=e_4,\ e_2\vtl e_1=-\beta e_4,\ e_3\vtl e_1=-2e_4;
\end{cases}$

$AD_4^{25}(\alpha,\beta):\  \begin{cases}
e_1\vtr e_1=\frac12 e_3,\ e_1\vtr e_2=\alpha e_4,\ e_1\vtr e_3=-2e_4,\\ e_2\vtr e_1=(1+\beta)e_4,\ e_2\vtr e_2=e_4,\ e_3\vtr e_1=e_4,\\
e_1\vtl e_1=\frac12e_3,\ e_1\vtl e_2=-\alpha e_4,\ e_1\vtl e_3=e_4,\\ e_2\vtl e_1=-\beta e_4,\ e_2\vtl e_2=-e_4,\ e_3\vtl e_1=-2e_4;
\end{cases}$

$AD_4^{26}(\alpha):\ \begin{cases}
e_1\vtr e_1=e_2,\ e_1\vtr e_2=\alpha e_4,\ e_1\vtr e_3=(-1+\alpha )e_4,\ e_2\vtr e_1=2e_4,\ e_3\vtr e_1=-\alpha e_4,\\
e_1\vtl e_1=-e_2+e_3,\ e_1\vtl e_2=-\alpha e_4,\ e_1\vtl e_3=-\alpha e_4,\\ e_2\vtl e_1=-e_4,\ e_3\vtl e_1=(-1+\alpha )e_4.
\end{cases}
$

\end{thm}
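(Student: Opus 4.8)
The plan is to follow the template already used in the proofs of Theorems~\ref{theo_3.1}--\ref{theo_3.3}. First I would show that $\langle e_4\rangle$ is simultaneously the center of $(As_4^9,\cdot)$ and of $(As_4^9,\vtr,\vtl)$. For this I would apply identity~\eqref{id5} to a short list of triples chosen so that one side of $(x\cdot y)\vtr z=x\vtl(y\cdot z)$ vanishes while the other isolates a single product involving $e_4$; using the nonzero associative products $e_1\cdot e_1=e_3$, $e_2\cdot e_1=e_4$, $e_1\cdot e_3=-e_4$, $e_3\cdot e_1=-e_4$ together with the vanishing of the remaining products, this forces $e_4\vtr e_i=0$ and $e_i\vtl e_4=0$ for all $i$. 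Combining these with $e_4e_i=e_4\vtr e_i+e_4\vtl e_i=0$ and $e_ie_4=e_i\vtr e_4+e_i\vtl e_4=0$ then yields $e_4\vtl e_i=0$ and $e_i\vtr e_4=0$ as well, so that $e_4$ is central in both structures.

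Next I would pass to the quotient. Since $e_1\cdot e_1=e_3$ and all other products lie in $\langle e_4\rangle$, the algebra $As_4^9/\langle e_4\rangle$ is isomorphic to $As_3^3$, and by Proposition~\ref{compatible} the induced structure $(As_4^9/\langle e_4\rangle,\vtr,\vtl)$ is one of the compatible anti-dendriform structures on $As_3^3$, namely one of $AD_3^{18}(\alpha),\dots,AD_3^{23}$. This splits the argument into six cases. In each case I would write the most general lift: the $e_3$-components of all products are dictated by the chosen quotient structure, while to each product I append an unknown coefficient $\alpha_{ij}$ of $e_4$, subject to the constraint (forced by \eqref{cdot}) that the $\vtl$-coefficient of each product equals the negative of its $\vtr$-coefficient up to the contribution prescribed by the associative table of $As_4^9$.

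Then for each of the six cases I would impose \eqref{id1}--\eqref{id7} on the triples built from $e_1,e_2,e_3$. As in Theorem~\ref{theo_3.3}, applying \eqref{id1}--\eqref{id4} to $\{e_1,e_1,e_1\}$ (where $e_1\cdot e_1=e_3$ and $e_1\cdot e_3,\,e_3\cdot e_1$ are nonzero) produces a small linear system in the quotient parameter solvable only for the distinguished value, forcing $\alpha=\tfrac12$ in the $AD_3^{18}(\alpha)$ case; for the families $AD_3^{19},\,AD_3^{20}(\alpha),\,AD_3^{21}(\alpha),\,AD_3^{22}(\alpha,\beta)$ the analogous triples yield incompatible requirements on the coefficient of $e_4$ in $e_1\vtr e_3$ (as in Theorem~\ref{theo_3.3}, a pair of the form $\alpha_{13}=0$ and $\alpha_{13}=-1$), ruling these cases out. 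The surviving cases are $AD_3^{18}(\tfrac12)$ and $AD_3^{23}$; in the latter the generator $e_2=e_1\vtr e_1$ must be lifted compatibly with $e_2\cdot e_1=e_4$, which produces the extra relations appearing in $AD_4^{26}(\alpha)$. For the survivors I would use the automorphism group of $As_4^9$, namely $\varphi(e_1)=ae_1+be_2+ce_3+ee_4$, $\varphi(e_2)=a^2e_2+de_4$, $\varphi(e_3)=a^2e_3+a(b-2c)e_4$, $\varphi(e_4)=a^3e_4$, to track how the $\alpha_{ij}$ transform and to normalize them, splitting into subcases according to which coefficients vanish, and thereby obtaining exactly $AD_4^{24}(\alpha,\beta)$, $AD_4^{25}(\alpha,\beta)$ and $AD_4^{26}(\alpha)$.

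I expect the main obstacle to be twofold. First is the bookkeeping in the $AD_3^{23}$ case: unlike in Theorem~\ref{theo_3.3}, where this case collapsed, here $e_2$ is not central in $As_4^9$ (since $e_2\cdot e_1=e_4$), so lifting $e_1\vtr e_1=e_2$ interacts nontrivially with the associative table and one must check that all seven identities can be satisfied rather than force a contradiction. Second is the normalization step: correctly computing the action of $\varphi$ on the parameters $\alpha_{ij}$ and verifying that the resulting representatives are pairwise non-isomorphic, with irredundant parameter ranges, is the delicate part, since distinct values of $(\alpha,\beta)$ could a priori yield isomorphic algebras. The remaining verifications reduce to routine linear algebra over $\mathbb{C}$.
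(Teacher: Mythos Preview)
Your plan is correct and follows essentially the same route as the paper's own proof: first pinning down $e_4$ as central via \eqref{id5}, then reducing modulo $\langle e_4\rangle$ to the six $As_3^3$-compatible structures $AD_3^{18}(\alpha),\dots,AD_3^{23}$, eliminating the middle four by the contradictory constraints on $\alpha_{13}$, and finally normalizing the two survivors using $\mathrm{Aut}(As_4^9)$. One small caution: because in $As_4^9$ the products $e_1\cdot e_3$ and $e_3\cdot e_1$ carry a minus sign (in contrast to $As_4^8$), the contradictory pair in the excluded cases is $\alpha_{13}=0$ versus $\alpha_{13}=1$ rather than $0$ versus $-1$, but this does not affect the argument.
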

\begin{proof}
By considering \eqref{id5} for the following triples
$$\{e_1,e_2,e_1\},\
\{e_1,e_3,e_1\},\
\{e_2,e_1,e_2\}, \
\{e_2,e_3,e_1\},\
\{e_4,e_3,e_1\},\ \{e_1,e_1,e_4\},\
\{e_4,e_1,e_1\}.$$ we get
$e_1\vtl e_4=0, \ e_4\vtr e_1=0, \ e_4\vtr e_2=0, \ e_2\vtl e_4=0, \ e_4\vtl e_4=0, \ e_3\vtr e_4=0, \ e_4\vtl e_3=0$, respectively.

By $e_4e_i=e_4\vtr e_i+e_4\vtl e_i=0$ and $e_ie_4=e_i\vtr e_4+e_i\vtl e_4=0$ these imply $e_4\vtl e_i=0$ and $e_i\vtr e_4=0$. Then it is easy to see that $\langle e_4\rangle$ is the center of the algebras $(As_4^9, \cdot)$ and $(As_4^9, \vtr, \vtl)$. If we take the algebra $As_4^9/\langle e_4\rangle,$ this algebra is a three-dimensional associative algebra $As^3_3$ with multiplication $e_1e_1=e_3$, and we know that any three-dimensional complex anti-dendriform algebra associated with the algebra $As_3^3$ is isomorphic to one of the pairwise non-isomorphic algebras $AD_3^{18}(\alpha)-AD_3^{23}.$

According to Proposition \ref{compatible} one can write

\textbf{Case $(As_4^9, \vtr, \vtl)/\langle e_4\rangle\cong AD_3^{18}(\alpha)$}
\[
\begin{cases}
e_1\vtr e_1=\alpha e_3+\alpha_{11}e_4,\\
e_1\vtr e_2=\alpha_{12}e_4,\\
e_1\vtr e_3=(-1+\alpha_{13})e_4,\\
e_2\vtr e_1=(1+\alpha_{21})e_4,\\
e_2\vtr e_2=\alpha_{22}e_4,\\
e_2\vtr e_3=\alpha_{23}e_4,\\
e_3\vtr e_1=(-1+\alpha_{31})e_4,\\
e_3\vtr e_2=\alpha_{32}e_4,\\
e_3\vtr e_3=\alpha_{33}e_4,
\end{cases}
\qquad
\begin{cases}
e_1\vtl e_1=(1-\alpha)e_3-\alpha_{11}e_4,\\
e_1\vtl e_2=-\alpha_{12}e_4,\\
e_1\vtl e_3=-\alpha_{13}e_4,\\
e_2\vtl e_1=-\alpha_{21}e_4,\\
e_2\vtl e_2=-\alpha_{22}e_4,\\
e_2\vtl e_3=-\alpha_{23}e_4,\\
e_3\vtl e_1=-\alpha_{31}e_4,\\
e_3\vtl e_2=-\alpha_{32}e_4,\\
e_3\vtl e_3=-\alpha_{33}e_4.
\end{cases}
\]

Considering \eqref{id2} and \eqref{id3} for the triple $\{e_1,e_1,e_1\}$, we get the following system of equations:
\[
\begin{cases}
-1+\alpha_{31}+(-1+\alpha_{13})\alpha=0,\\
-\alpha_{13}+(-1+\alpha_{13})\alpha=0.
\end{cases}
\]
From this system, we obtain that $\alpha_{31}=1-\alpha_{13}$.

Similarly, if we consider \eqref{id4} for the triple $\{e_1,e_1,e_1\}$, we get the relation $(\alpha_{13}-1)(2\alpha-1)=0$. So we have the following system of equations:
\[
\begin{cases}
(\alpha_{13}-1)(2\alpha-1)=0,\\
\alpha_{13}(\alpha-1)-\alpha=0.
\end{cases}
\]

If we assume $\alpha\neq\frac12$, then from the first equation of the system we get $\alpha_{13}=1$, and if we put this value into the second equation of the system, we get the contradiction $-1=0$. So $\alpha=\frac12$. From the above system, we find $\alpha_{13}=-1$ and $\alpha_{31}=2$.

By considering \eqref{id2} for the triples $\{e_1,e_1,e_2\}, \ \{e_1,e_1,e_3\}$ and $\{e_2,e_1,e_1\}$, we derive $$\alpha_{23}=\alpha_{32}=\alpha_{33}=0.$$

So, the multiplication table of the algebra has the following form:
\[
\begin{cases}
e_1\vtr e_1=\frac12 e_3+\alpha_{11}e_4,\\
e_1\vtr e_2=\alpha_{12}e_4,\\
e_1\vtr e_3=-2e_4,\\
e_2\vtr e_1=(1+\alpha_{21})e_4,\\
e_2\vtr e_2=\alpha_{22}e_4,\\
e_3\vtr e_1=e_4,
\end{cases}
\qquad
\begin{cases}
e_1\vtl e_1=\frac12e_3-\alpha_{11}e_4,\\
e_1\vtl e_2=-\alpha_{12}e_4,\\
e_1\vtl e_3=e_4,\\
e_2\vtl e_1=-\alpha_{21}e_4,\\
e_2\vtl e_2=-\alpha_{22}e_4,\\
e_3\vtl e_1=-2e_4.
\end{cases}
\]

Let us consider the general change of the generators of basis:
\[\varphi(e_1)=ae_1+be_2+ce_3+ee_4,\ \varphi(e_2)=a^2e_2+de_4,\ \varphi(e_3)=a^2e_3+a(b-2c)e_4,\ \varphi(e_4)=a^3e_4,\]
where $a\neq0$.

We express the new basis elements $\{\varphi(e_1), \varphi(e_2),\varphi(e_3), \varphi(e_4)\}$  via the basis elements $\{e_1, e_2, e_3, e_4\}.$  By verifying all the multiplications of the algebra in the new basis we obtain the relations between the parameters $\{\alpha'_{11}, \alpha'_{12},\alpha'_{21}, \alpha'_{22}\}$ and $\{\alpha_{11}, \alpha_{12}, \alpha_{21}, \alpha_{22}\}$:
$$
\begin{array}{lll}
\varphi(e_1)\vtr \varphi(e_1)=\frac{1}{2}\varphi(e_3)+\alpha^\prime_{11}\varphi(e_4) & \Rightarrow & a^3\alpha^\prime_{11}=a^2\alpha_{11}+ab(\frac12+\alpha_{12}+\alpha_{21})+b^2\alpha_{22},\\[1mm]
\varphi(e_1)\vtr \varphi(e_2)=\alpha^\prime_{12}\varphi(e_4) & \Rightarrow & a\alpha^\prime_{12}=a\alpha_{12}+b\alpha_{22},\\[1mm]
\varphi(e_2)\vtr \varphi(e_1)=(1+\alpha^\prime_{21})\varphi(e_4) & \Rightarrow & a\alpha^\prime_{21}=a\alpha_{21}+b\alpha_{22},\\[1mm]
\varphi(e_2)\vtr \varphi(e_2)=\alpha^\prime_{22}\varphi(e_4) & \Rightarrow & \alpha^\prime_{22}=a\alpha_{22}.\\[1mm]
\end{array}
$$

We have the following cases:

\begin{itemize}
    \item[(a)] Let $\alpha_{22}=0.$ Then if $\alpha_{11}\neq0$, with  choosing $a=-\frac{b(1+2\alpha_{12}+2\alpha_{21})}{2\alpha_{11}}$ and if  $\alpha_{11}=0$ with choosing $b=0$, we can conclude $\alpha^\prime_{11}=0$. So, we get the algebra $AD_4^{24}(\alpha,\beta)$.

    \item[(b)] Let $\alpha_{22}\neq0$. Choosing $a=\frac{1}{\alpha_{22}}$ and the suitable value of $b$ we derive $\alpha^\prime_{11}=0$ and $\alpha^\prime_{22}=0$, obtain the algebra $AD_4^{25}(\alpha,\beta)$.
\end{itemize}

Now if $(As_4^9, \vtr, \vtl)/\langle e_4\rangle$ ante-dendriform algebra is isomorphic to one of the algebras $AD_3^{19}$, $AD_3^{20}(\alpha),$ $AD_3^{21 }(\alpha)$ and $AD_3^{22}(\alpha, \beta)$, then we will proof there is not a four-dimensional complex anti-dendriform algebra associated to the associative algebra $As_4^9.$

\textbf{Case $(As_4^9, \vtr, \vtl)/\langle e_4\rangle\cong AD_3^{19}$}
\[
\begin{cases}
e_1\vtr e_1=\alpha_{11}e_4,\\
e_1\vtr e_2=\alpha_{12}e_4,\\
e_1\vtr e_3=(-1+\alpha_{13})e_4,\\
e_2\vtr e_1=e_3+(1+\alpha_{21})e_4,\\
e_2\vtr e_2=\alpha_{22}e_4,\\
e_2\vtr e_3=\alpha_{23}e_4,\\
e_3\vtr e_1=(-1+\alpha_{31})e_4,\\
e_3\vtr e_2=\alpha_{32}e_4,\\
e_3\vtr e_3=\alpha_{33}e_4,
\end{cases}
\qquad
\begin{cases}
e_1\vtl e_1=e_3-\alpha_{11}e_4,\\
e_1\vtl e_2=-\alpha_{12}e_4,\\
e_1\vtl e_3=-\alpha_{13}e_4,\\
e_2\vtl e_1=-e_3-\alpha_{21}e_4,\\
e_2\vtl e_2=-\alpha_{22}e_4,\\
e_2\vtl e_3=-\alpha_{23}e_4,\\
e_3\vtl e_1=-\alpha_{31}e_4,\\
e_3\vtl e_2=-\alpha_{32}e_4,\\
e_3\vtl e_3=-\alpha_{33}e_4.
\end{cases}
\]

By consediring the identity \eqref{id3} for the triples $\{e_1,e_1,e_1\}$ and $\{e_1,e_2,e_1\}$ we get the contradiction $\alpha_{13}=0$ and $\alpha_{13}=1$. Therefore, there is not such a case.

\textbf{Case $(As_4^9, \vtr, \vtl)/\langle e_4\rangle\cong AD_3^{20}(\alpha)$}
\[
\begin{cases}
e_1\vtr e_1=\alpha e_3+\alpha_{11}e_4,\\
e_1\vtr e_2=e_3+\alpha_{12}e_4,\\
e_1\vtr e_3=(-1+\alpha_{13})e_4,\\
e_2\vtr e_1=-e_3+(1+\alpha_{21})e_4,\\
e_2\vtr e_2=\alpha_{22}e_4,\\
e_2\vtr e_3=\alpha_{23}e_4,\\
e_3\vtr e_1=(-1+\alpha_{31})e_4,\\
e_3\vtr e_2=\alpha_{32}e_4,\\
e_3\vtr e_3=\alpha_{33}e_4,
\end{cases}
\qquad
\begin{cases}
e_1\vtl e_1=(1-\alpha)e_3-\alpha_{11}e_4,\\
e_1\vtl e_2=-e_3-\alpha_{12}e_4,\\
e_1\vtl e_3=-\alpha_{13}e_4,\\
e_2\vtl e_1=e_3-\alpha_{21}e_4,\\
e_2\vtl e_2=-\alpha_{22}e_4,\\
e_2\vtl e_3=-\alpha_{23}e_4,\\
e_3\vtl e_1=-\alpha_{31}e_4,\\
e_3\vtl e_2=-\alpha_{32}e_4,\\
e_3\vtl e_3=-\alpha_{33}e_4.
\end{cases}
\]

Consider \eqref{id3} for the triples $\{e_1,e_1,e_2\}$ and $\{e_1,e_1,e_1\}$ we obtain $\alpha_{13}=1$ and $\alpha_{13}=0$ which derives a contradiction. Hence, there is not such a case.

\textbf{Case $(As_4^9, \vtr, \vtl)/\langle e_4\rangle\cong AD_3^{21}(\alpha)$}
\[
\begin{cases}
e_1\vtr e_1=\alpha_{11}e_4,\\
e_1\vtr e_2=e_3+\alpha_{12}e_4,\\
e_1\vtr e_3=(-1+\alpha_{13})e_4,\\
e_2\vtr e_1=\alpha e_3+(1+\alpha_{21})e_4,\\
e_2\vtr e_2=\alpha_{22}e_4,\\
e_2\vtr e_3=\alpha_{23}e_4,\\
e_3\vtr e_1=(-1+\alpha_{31})e_4,\\
e_3\vtr e_2=\alpha_{32}e_4,\\
e_3\vtr e_3=\alpha_{33}e_4,
\end{cases}
\qquad
\begin{cases}
e_1\vtl e_1=e_3-\alpha_{11}e_4,\\
e_1\vtl e_2=-e_3-\alpha_{12}e_4,\\
e_1\vtl e_3=-\alpha_{13}e_4,\\
e_2\vtl e_1=-\alpha e_3-\alpha_{21}e_4,\\
e_2\vtl e_2=-\alpha_{22}e_4,\\
e_2\vtl e_3=-\alpha_{23}e_4,\\
e_3\vtl e_1=-\alpha_{31}e_4,\\
e_3\vtl e_2=-\alpha_{32}e_4,\\
e_3\vtl e_3=-\alpha_{33}e_4.
\end{cases}
\]
where $\alpha\neq-1$.

Considering the identity \eqref{id3} for the triples $\{e_1,e_1,e_1\}$ and  $\{e_1,e_1,e_2\}$, we obtain the following restrictions, respectively:
\[\alpha_{13}=0, \ \ \alpha_{13}=1.\]
This implies a contradiction. Hence, there is not a compatible anti-dendriform algebra structure on $As_4^9.$

\textbf{Case $(As_4^9, \vtr, \vtl)/\langle e_4\rangle\cong AD_3^{22}(\alpha,\beta)$}
\[
\begin{cases}
e_1\vtr e_1=\alpha e_3+\alpha_{11}e_4,\\
e_1\vtr e_2=\alpha_{12}e_4,\\
e_1\vtr e_3=(-1+\alpha_{13})e_4,\\
e_2\vtr e_1=\beta e_3+(1+\alpha_{21})e_4,\\
e_2\vtr e_2=e_3+\alpha_{22}e_4,\\
e_2\vtr e_3=\alpha_{23}e_4,\\
e_3\vtr e_1=(-1+\alpha_{31})e_4,\\
e_3\vtr e_2=\alpha_{32}e_4,\\
e_3\vtr e_3=\alpha_{33}e_4,
\end{cases}
\qquad
\begin{cases}
e_1\vtl e_1=(1-\alpha)e_3-\alpha_{11}e_4,\\
e_1\vtl e_2=-\alpha_{12}e_4,\\
e_1\vtl e_3=-\alpha_{13}e_4,\\
e_2\vtl e_1=-\beta e_3-\alpha_{21}e_4,\\
e_2\vtl e_2=-e_3-\alpha_{22}e_4,\\
e_2\vtl e_3=-\alpha_{23}e_4,\\
e_3\vtl e_1=-\alpha_{31}e_4,\\
e_3\vtl e_2=-\alpha_{32}e_4,\\
e_3\vtl e_3=-\alpha_{33}e_4.
\end{cases}
\]

Similarly, consider \eqref{id3} for the triples $\{e_1,e_2,e_2\}$ and $\{e_1,e_1,e_1\}$ we can conclude $\alpha_{13}=1$ and $\alpha_{13}=0$
which derives a contradiction. Hence, there is not such a case.

\textbf{Case $(As_4^9, \vtr, \vtl)/\langle e_4\rangle\cong AD_3^{23}$}
\[
\begin{cases}
e_1\vtr e_1=e_2+\alpha_{11}e_4,\\
e_1\vtr e_2=\alpha_{12}e_4,\\
e_1\vtr e_3=(-1+\alpha_{13})e_4,\\
e_2\vtr e_1=(1+\alpha_{21})e_4,\\
e_2\vtr e_2=\alpha_{22}e_4,\\
e_2\vtr e_3=\alpha_{23}e_4,\\
e_3\vtr e_1=(-1+\alpha_{31})e_4,\\
e_3\vtr e_2=\alpha_{32}e_4,\\
e_3\vtr e_3=\alpha_{33}e_4,
\end{cases}
\qquad
\begin{cases}
e_1\vtl e_1=-e_2+e_3-\alpha_{11}e_4,\\
e_1\vtl e_2=-\alpha_{12}e_4,\\
e_1\vtl e_3=-\alpha_{13}e_4,\\
e_2\vtl e_1=-\alpha_{21}e_4,\\
e_2\vtl e_2=-\alpha_{22}e_4,\\
e_2\vtl e_3=-\alpha_{23}e_4,\\
e_3\vtl e_1=-\alpha_{31}e_4,\\
e_3\vtl e_2=-\alpha_{32}e_4,\\
e_3\vtl e_3=-\alpha_{33}e_4.
\end{cases}
\]

Considering the some identity, we obtain the following restrictions on structure
constants:

$$\begin{tabular}{p{2.5cm}|p{2.5cm}|p{2.5cm}}
    \hline
  Triple & Identity  & restrictions \\
  \hline
    $\{e_1,e_1,e_2\}$   & $\eqref{id2}$& $\alpha_{32}=0$, \\[1mm]
  \hline
    $\{e_1,e_1,e_3\}$   & $\eqref{id2}$& $\alpha_{33}=0$, \\[1mm]
  \hline
    $\{e_2,e_1,e_1\}$   & $\eqref{id2}$& $\alpha_{22}=0$, \\[1mm]
  \hline
    $\{e_1,e_1,e_1\}$   & $\eqref{id3}$& $\alpha_{13}=\alpha_{12}$, \\[1mm]
  \hline
  $\{e_2,e_1,e_1\}$   & $\eqref{id3}$& $\alpha_{23}=0$, \\[1mm]
  \hline
   $\{e_1,e_1,e_1\}$   & $\eqref{id1}$& $\alpha_{21}=1$, \\[1mm]
  \hline
   $\{e_1,e_1,e_1\}$   & $\eqref{id4}$& $\alpha_{31}=1-\alpha_{12}$. \\[1mm]
  \hline
   \end{tabular}$$

By the basis change $e'_2=e_2+\alpha_{11}e_4$ we can put $\alpha_{11}=0.$ Thus, the table of multiplications of the algebra has the form:
\[\begin{cases}
e_1\vtr e_1=e_2,\ e_1\vtr e_2=\alpha_{12}e_4,\ e_1\vtr e_3=(-1+\alpha_{12})e_4,\ e_2\vtr e_1=2e_4,\ e_3\vtr e_1=-\alpha_{12}e_4,\\
e_1\vtl e_1=-e_2+e_3,\ e_1\vtl e_2=-\alpha_{12}e_4,\ e_1\vtl e_3=-\alpha_{12}e_4,\ e_2\vtl e_1=-e_4,\ e_3\vtl e_1=(-1+\alpha_{12})e_4.
\end{cases}\]

By using the general change in the algebra we can show that $\alpha'_{12}=\alpha_{12}.$
Hence, we get the algebra $AD_4^{26}(\alpha)$.
\end{proof}

\begin{thm}\label{theo_3.5}
Any four-dimensional complex anti-dendriform algebra associated to the algebra $As_4^{10}$ is isomorphic to one of the following pairwise non-isomorphic algebras:

$AD_4^{27}: \ e_1\vtr e_2=e_4, \ e_3\vtr e_1=e_4,\ e_1\vtl e_1=e_4, \ e_2\vtl e_1=-e_4, \ e_3\vtl e_1=-e_4, \ e_3\vtl e_3=e_4;
$

$AD_4^{28}: \ e_1\vtr e_2=e_4, \ e_3\vtr e_3=e_4,\ e_2\vtr e_1=-e_4, \ e_1\vtl e_1=e_4;
$

$AD_4^{29}: \ e_1\vtr e_1=e_4, \  e_1\vtr e_2=e_4, \  e_2\vtr e_1=-e_4, \  e_3\vtr e_3=e_4;$

$AD_4^{30}: \ \begin{cases}e_1\vtr e_2=e_4, \ e_1\vtr e_3=e_4, \ e_2\vtr e_1=-e_4, \ e_2\vtr e_2=e_4, \ e_3\vtr e_1=e_4,\\
e_1\vtl e_1=e_4,  \ e_1\vtl e_3=-e_4,  \ e_2\vtl e_2=-e_4, \ e_3\vtl e_1=-e_4, \ e_3\vtl e_3=e_4;
\end{cases}$

$AD_4^{31}(\alpha): \ \begin{cases}e_1\vtr e_1=e_4,\ e_1\vtr e_2=\alpha e_4, \ e_2\vtr e_1=-\alpha e_4, \ e_2\vtr e_2=e_4, \ e_3\vtr e_3=e_4,\\
e_1\vtl e_2=(1-\alpha) e_4, \ e_2\vtl e_1=(\alpha-1) e_4, \ e_2\vtl e_2=-e_4.
\end{cases}
$

\end{thm}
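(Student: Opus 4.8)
The plan is to follow the reduction-to-the-quotient strategy already used in the proofs of Theorems~\ref{theo_3.1} and \ref{theo_3.2}, exploiting that every structure constant of $As_4^{10}$ is a multiple of $e_4$. First I would show that $\langle e_4\rangle$ is simultaneously the center of $(As_4^{10},\cdot)$ and of $(As_4^{10},\vtr,\vtl)$. Applying \eqref{id5} to the seven triples
$$\{e_1,e_3,e_3\},\ \{e_2,e_3,e_3\},\ \{e_3,e_1,e_1\},\ \{e_3,e_3,e_1\},\ \{e_3,e_3,e_2\},\ \{e_1,e_1,e_3\},\ \{e_3,e_3,e_4\}$$
should force $e_1\vtl e_4=e_2\vtl e_4=e_3\vtl e_4=0$ and $e_4\vtr e_1=e_4\vtr e_2=e_4\vtr e_3=e_4\vtr e_4=0$; combining these with centrality of $e_4$ in $(As_4^{10},\cdot)$ via $e_4e_i=e_4\vtr e_i+e_4\vtl e_i=0$ and $e_ie_4=e_i\vtr e_4+e_i\vtl e_4=0$ yields the remaining vanishings. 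Since $As_4^{10}/\langle e_4\rangle$ is then the three-dimensional abelian associative algebra, Proposition~\ref{compatible} says the induced structure is one of $AD_3^3$--$AD_3^7(\lambda)$, and I would treat these five cases separately.

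The main case is $AD_3^3$. Here the quotient products all vanish, so every $e_i\vtr e_j$ and $e_i\vtl e_j$ lies in $\langle e_4\rangle$; together with $e_4\vtr e_k=e_4\vtl e_k=e_k\vtr e_4=e_k\vtl e_4=0$ this makes both $(As_4^{10},\vtr)$ and $(As_4^{10},\vtl)$ $2$-nilpotent. One then checks that all seven identities \eqref{id1}--\eqref{id7} hold automatically, since in each of them one side is a product involving the annihilated ideal $\langle e_4\rangle$. Thus the only surviving constraints are $\vtl=\cdot-\vtr$ together with the fact that $(As_4^{10},\vtr)$ is a four-dimensional $2$-nilpotent, three-generated associative algebra. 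By Theorem~\ref{table} there are exactly five such indecomposable algebras, namely $As_4^3$, $As_4^6$, $As_4^{10}$, $As_4^{14}$, $As_4^{15}(\alpha)$; running through the possibilities for $(As_4^{10},\vtr)$ and imposing $e_1\vtr e_1+e_1\vtl e_1=e_4$, $e_1\vtr e_2+e_1\vtl e_2=e_4$, $e_2\vtr e_1+e_2\vtl e_1=-e_4$, $e_3\vtr e_3+e_3\vtl e_3=e_4$ should produce the five families $AD_4^{27}$--$AD_4^{31}(\alpha)$ after normalizing with the automorphisms $\varphi_1,\varphi_2$ of $As_4^{10}$ recorded in Theorem~\ref{table}.

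For the remaining cases $AD_3^4$--$AD_3^7(\lambda)$ I expect each to be empty, exactly as happened for $As_4^6$. Writing out the lift introduces a genuine $e_3$-component in the mixed products (for instance $e_1\vtr e_2=e_3+(1+\alpha_{12})e_4$ in the $AD_3^4$ lift), and since $e_3\cdot e_3=e_4$ in $As_4^{10}$ the associative constraint forces $e_3\vtr e_3=(1+\alpha_{33})e_4$. Evaluating \eqref{id4} on $\{e_3,e_1,e_2\}$ gives $e_3\vtr(e_1\vtr e_2)=(e_3\vtl e_1)\vtl e_2$, whose left side contains $e_3\vtr e_3=(1+\alpha_{33})e_4$ while the right side lies in $\langle e_4\vtl e_2\rangle=0$, forcing $\alpha_{33}=-1$; evaluating \eqref{id4} on the complementary triple $\{e_2,e_1,e_3\}$ forces $\alpha_{33}=0$. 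The same mechanism applies to $AD_3^6$ using $\{e_3,e_1,e_2\}$ and $\{e_1,e_2,e_3\}$, and to $AD_3^5$ and $AD_3^7(\lambda)$ using $\{e_3,e_1,e_1\}$ and $\{e_1,e_1,e_3\}$ (where the $e_3$-component now comes from $e_1\vtr e_1=e_3+\cdots$). The resulting contradiction $\alpha_{33}=0=-1$ rules out all four cases.

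Finally I would verify that $AD_4^{27}$--$AD_4^{31}(\alpha)$ are pairwise non-isomorphic and that $AD_4^{31}(\alpha)$ is a genuine one-parameter family. The hard part is the bookkeeping in the $AD_3^3$ case: matching each abstract type of $(As_4^{10},\vtr)$ to a normal form under $\mathrm{Aut}(As_4^{10})$ and confirming non-isomorphism across the five families. By contrast, the centrality reduction of the first step and the contradiction arguments for $AD_3^4$--$AD_3^7(\lambda)$ are routine once the correct triples are chosen.
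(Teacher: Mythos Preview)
Your proposal is correct and follows essentially the same route as the paper: you establish centrality of $\langle e_4\rangle$ via \eqref{id5} on the same set of triples (in a different order), reduce to the abelian quotient, handle the $AD_3^3$ case by recognising $(As_4^{10},\vtr)$ as a $2$-nilpotent three-generated associative algebra and invoking the five-member list from Theorem~\ref{table}, and eliminate $AD_3^4$--$AD_3^7(\lambda)$ by extracting the contradiction $\alpha_{33}=0=-1$ from \eqref{id4}. The only cosmetic difference is that for $AD_3^4$ you use the triple $\{e_2,e_1,e_3\}$ where the paper uses $\{e_1,e_2,e_3\}$; both yield $\alpha_{33}=0$ for the same reason.
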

\begin{proof}
By considering \eqref{id5} for the following triples
$$\{e_1,e_3,e_3\},\
\{e_3,e_3,e_1\},\
\{e_3,e_3,e_2\}, \
\{e_2,e_3,e_3\},\
\{e_1,e_1,e_3\},\ \{e_3,e_1,e_1\},\
\{e_3,e_3,e_4\}.$$ we get
$e_1\vtl e_4=0, \ e_4\vtr e_1=0, \ e_4\vtr e_2=0, \ e_2\vtl e_4=0, \ e_4\vtr e_3=0, \ e_3\vtl e_4=0, \ e_4\vtr e_4=0$, respectively.

By $e_4e_i=e_4\vtr e_i+e_4\vtl e_i=0$ and $e_ie_4=e_i\vtr e_4+e_i\vtl e_4=0$ these imply $e_4\vtl e_i=0$ and $e_i\vtr e_4=0$. Then it is easy to see that $\langle e_4\rangle$ is the center of the algebras $(As_4^{10}, \cdot)$ and $(As_4^{10}, \vtr, \vtl)$. If we take the algebra $As_4^{10}/\langle e_4\rangle,$ this algebra is a three-dimensional associative Abelian algebra, and we know that any three-dimensional complex anti-dendriform algebra associated with the Abelian algebra is isomorphic to one of the pairwise non-isomorphic algebras $AD_3^3-AD_3^7(\lambda).$

According to Proposition \ref{compatible} one can write

\textbf{Case $(As_4^{10}, \vtr, \vtl)/\langle e_4\rangle\cong AD_3^{3}$}
\[
\begin{cases}
e_1\vtr e_1=(1+\alpha_{11})e_4,\\
e_1\vtr e_2=(1+\alpha_{12})e_4,\\
e_1\vtr e_3=\alpha_{13}e_4,\\
e_2\vtr e_1=(-1+\alpha_{21})e_4,\\
e_2\vtr e_2=\alpha_{22}e_4,\\
e_2\vtr e_3=\alpha_{23}e_4,\\
e_3\vtr e_1=\alpha_{31}e_4,\\
e_3\vtr e_2=\alpha_{32}e_4,\\
e_3\vtr e_3=(1+\alpha_{33})e_4,
\end{cases}
\qquad
\begin{cases}
e_1\vtl e_1=-\alpha_{11}e_4,\\
e_1\vtl e_2=-\alpha_{12}e_4,\\
e_1\vtl e_3=-\alpha_{13}e_4,\\
e_2\vtl e_1=-\alpha_{21}e_4,\\
e_2\vtl e_2=-\alpha_{22}e_4,\\
e_2\vtl e_3=-\alpha_{23}e_4,\\
e_3\vtl e_1=-\alpha_{31}e_4,\\
e_3\vtl e_2=-\alpha_{32}e_4,\\
e_3\vtl e_3=-\alpha_{33}e_4.
\end{cases}
\]

It is not difficult to see that $(As_4^{10}, \vtr, \vtl)$ are 2-nilpotent and have three generator elements. Thus we have $(x\vtr y)\vtr z=x\vtr (y\vtr z)=0$ which implies that $(As_4^{10}, \vtr)$ is also associative 2-nilpotent algebra and has three generator elements. According to Theorem \ref{table} there are five non-isomorphic four-dimensional indecomposable associative 2-nilpotent and the three generated algebras. Hence, we get $AD_4^{27}-AD_4^{31}(\alpha)$ algebras.

It is possible to consider the multiplication $\vtl$ as above. However, it is not difficult to show that the constructed algebras are isomorphic.

Now if $(As_4^{10}, \vtr, \vtl)/\langle e_4\rangle$ ante-dendriform algebra is isomorphic to one of the algebras $AD_3^{4}-AD_3^{7}(\alpha)$, then we will proof there is not a four-dimensional complex anti-dendriform algebra associated to the associative algebra $As_4^{10}.$

\textbf{Case $(As_4^{10}, \vtr, \vtl)/\langle e_4\rangle\cong AD_3^{4}$}
\[
\begin{cases}
e_1\vtr e_1=(1+\alpha_{11})e_4,\\
e_1\vtr e_2=e_3+(1+\alpha_{12})e_4,\\
e_1\vtr e_3=\alpha_{13}e_4,\\
e_2\vtr e_1=-e_3+(-1+\alpha_{21})e_4,\\
e_2\vtr e_2=\alpha_{22}e_4,\\
e_2\vtr e_3=\alpha_{23}e_4,\\
e_3\vtr e_1=\alpha_{31}e_4,\\
e_3\vtr e_2=\alpha_{32}e_4,\\
e_3\vtr e_3=(1+\alpha_{33})e_4,
\end{cases}
\qquad
\begin{cases}
e_1\vtl e_1=-\alpha_{11}e_4,\\
e_1\vtl e_2=-e_3-\alpha_{12}e_4,\\
e_1\vtl e_3=-\alpha_{13}e_4,\\
e_2\vtl e_1=e_3-\alpha_{21}e_4,\\
e_2\vtl e_2=-\alpha_{22}e_4,\\
e_2\vtl e_3=-\alpha_{23}e_4,\\
e_3\vtl e_1=-\alpha_{31}e_4,\\
e_3\vtl e_2=-\alpha_{32}e_4,\\
e_3\vtl e_3=-\alpha_{33}e_4.
\end{cases}
\]

Consider \eqref{id4} for the triples $\{e_1,e_2,e_3\}$
 and $\{e_3,e_1,e_2\}$ we obtain $\alpha_{33}=0$ and $\alpha_{33}=-1$ which derives a contradiction. Hence, there is not such a case.

\textbf{Case $(As_4^{10}, \vtr, \vtl)/\langle e_4\rangle\cong AD_3^{5}$}
\[
\begin{cases}
e_1\vtr e_1=e_3+(1+\alpha_{11})e_4,\\
e_1\vtr e_2=(1+\alpha_{12})e_4,\\
e_1\vtr e_3=\alpha_{13}e_4,\\
e_2\vtr e_1=(-1+\alpha_{21})e_4,\\
e_2\vtr e_2=\alpha_{22}e_4,\\
e_2\vtr e_3=\alpha_{23}e_4,\\
e_3\vtr e_1=\alpha_{31}e_4,\\
e_3\vtr e_2=\alpha_{32}e_4,\\
e_3\vtr e_3=(1+\alpha_{33})e_4,
\end{cases}
\qquad
\begin{cases}
e_1\vtl e_1=-e_3-\alpha_{11}e_4,\\
e_1\vtl e_2=-\alpha_{12}e_4,\\
e_1\vtl e_3=-\alpha_{13}e_4,\\
e_2\vtl e_1=-\alpha_{21}e_4,\\
e_2\vtl e_2=-\alpha_{22}e_4,\\
e_2\vtl e_3=-\alpha_{23}e_4,\\
e_3\vtl e_1=-\alpha_{31}e_4,\\
e_3\vtl e_2=-\alpha_{32}e_4,\\
e_3\vtl e_3=-\alpha_{33}e_4.
\end{cases}
\]

Considering identity \eqref{id4} for the triples $\{e_1,e_1,e_3\}$
 and $\{e_3,e_1,e_1\}$, we obtain the following restrictions on structure
constants:
\[\alpha_{33}=0,\ \alpha_{33}=-1,\]
which derives a contradiction. Hence, there is not such a case.

\textbf{Case $(As_4^{10}, \vtr, \vtl)/\langle e_4\rangle\cong AD_3^{6}$}
\[
\begin{cases}
e_1\vtr e_1=(1+\alpha_{11})e_4,\\
e_1\vtr e_2=e_3+(1+\alpha_{12})e_4,\\
e_1\vtr e_3=\alpha_{13}e_4,\\
e_2\vtr e_1=(-1+\alpha_{21})e_4,\\
e_2\vtr e_2=\alpha_{22}e_4,\\
e_2\vtr e_3=\alpha_{23}e_4,\\
e_3\vtr e_1=\alpha_{31}e_4,\\
e_3\vtr e_2=\alpha_{32}e_4,\\
e_3\vtr e_3=(1+\alpha_{33})e_4,
\end{cases}
\qquad
\begin{cases}
e_1\vtl e_1=-\alpha_{11}e_4,\\
e_1\vtl e_2=-e_3-\alpha_{12}e_4,\\
e_1\vtl e_3=-\alpha_{13}e_4,\\
e_2\vtl e_1=-\alpha_{21}e_4,\\
e_2\vtl e_2=-\alpha_{22}e_4,\\
e_2\vtl e_3=-\alpha_{23}e_4,\\
e_3\vtl e_1=-\alpha_{31}e_4,\\
e_3\vtl e_2=-\alpha_{32}e_4,\\
e_3\vtl e_3=-\alpha_{33}e_4.
\end{cases}
\]

Considering the identity \eqref{id4} for the triples $\{e_1,e_2,e_3\}$
 and $\{e_3,e_1,e_2\}$, we obtain the following restrictions, respectively:
\[\alpha_{33}=0, \ \ \alpha_{33}=-1.\]
This implies a contradiction. Hence, there is not a compatible anti-dendriform algebra structure on $As_4^{10}.$

\textbf{Case $(As_4^{10}, \vtr, \vtl)/\langle e_4\rangle\cong AD_3^{7}(\lambda)$}
\[
\begin{cases}
e_1\vtr e_1=e_3+(1+\alpha_{11})e_4,\\
e_1\vtr e_2=\lambda e_3+(1+\alpha_{12})e_4,\\
e_1\vtr e_3=\alpha_{13}e_4,\\
e_2\vtr e_1=(-1+\alpha_{21})e_4,\\
e_2\vtr e_2=e_3+\alpha_{22}e_4,\\
e_2\vtr e_3=\alpha_{23}e_4,\\
e_3\vtr e_1=\alpha_{31}e_4,\\
e_3\vtr e_2=\alpha_{32}e_4,\\
e_3\vtr e_3=(1+\alpha_{33})e_4,
\end{cases}
\qquad
\begin{cases}
e_1\vtl e_1=-e_3-\alpha_{11}e_4,\\
e_1\vtl e_2=-\lambda e_3-\alpha_{12}e_4,\\
e_1\vtl e_3=-\alpha_{13}e_4,\\
e_2\vtl e_1=-\alpha_{21}e_4,\\
e_2\vtl e_2=-e_3-\alpha_{22}e_4,\\
e_2\vtl e_3=-\alpha_{23}e_4,\\
e_3\vtl e_1=-\alpha_{31}e_4,\\
e_3\vtl e_2=-\alpha_{32}e_4,\\
e_3\vtl e_3=-\alpha_{33}e_4.
\end{cases}
\]

By consediring the identity \eqref{id4} for the triples $\{e_1,e_1,e_3\}$ and $\{e_3,e_1,e_1\}$ we get the contradiction $\alpha_{33}=0$ and $\alpha_{33}=-1$. Therefore, there is not such a case.

\end{proof}

\begin{thm}\label{theo_3.6}
Any four-dimensional complex anti-dendriform algebra associated to the algebra $As_4^{13}$ is isomorphic to the following algebra:
\[
AD_4^{32}(\alpha,\beta,\gamma): \ \begin{cases}
e_1\vtr e_1=\frac12 e_3+\alpha e_4,& e_1\vtl e_1=\frac12e_3-\alpha e_4,\\
e_1\vtr e_2=\beta e_4,& e_1\vtl e_2=-\beta e_4,\\
e_1\vtr e_3=-2e_4,& e_1\vtl e_3=e_4,\\
e_2\vtr e_1=(1+\beta)e_4,& e_2\vtl e_1=-\beta e_4,\\
e_2\vtr e_2=(1+\gamma)e_4,& e_2\vtl e_2=-\gamma e_4,\\
e_3\vtr e_1=e_4, & e_3\vtl e_1=-2e_4.
\end{cases}
\]

\end{thm}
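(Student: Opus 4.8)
The plan is to reuse the scheme of Theorems~\ref{theo_3.3} and~\ref{theo_3.4}, since $As_4^{13}/\langle e_4\rangle$ is again the three-dimensional algebra $As_3^3$ with $e_1e_1=e_3$. First I would check that $\langle e_4\rangle$ is the center of both $(As_4^{13},\cdot)$ and $(As_4^{13},\vtr,\vtl)$ by feeding \eqref{id5} triples in which one inner associative product vanishes and the other is $\pm e_4$: for instance $\{e_1,e_2,e_1\}$ gives $e_1\vtl e_4=0$ (because $e_1\cdot e_2=0$ and $e_2\cdot e_1=e_4$), and then $\{e_1,e_3,e_1\}$ gives $e_4\vtr e_1=e_1\vtl e_4=0$ (because $e_1\cdot e_3=e_3\cdot e_1=-e_4$). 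Running through the analogous triples for every index and using that $e_4$ is central for $\cdot$ (so $e_4\vtr e_i+e_4\vtl e_i=0$ and $e_i\vtr e_4+e_i\vtl e_4=0$) kills every product involving $e_4$. By Proposition~\ref{compatible} the quotient is then a compatible structure on $As_3^3$, hence one of $AD_3^{18}(\alpha)$--$AD_3^{23}$.

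Next I would eliminate the five cases $AD_3^{19}$--$AD_3^{23}$. For $AD_3^{19}$--$AD_3^{22}$ the relevant identities involve only the $e_1$-products together with $e_2\vtr e_1$ and $e_1\cdot e_3$, which agree with those of $As_4^9$; hence evaluating \eqref{id3} on $\{e_1,e_1,e_1\}$ and on the appropriate second triple yields two incompatible values of $\alpha_{13}$, exactly as in Theorem~\ref{theo_3.4}. The case $AD_3^{23}$ is the one point where $As_4^{13}$ genuinely departs from $As_4^9$: here $e_1\vtr e_1=e_2+\alpha_{11}e_4$ and $e_1\vtl e_1=-e_2+e_3-\alpha_{11}e_4$, so that $e_1\vtr e_1$ carries an $e_2$-component, and the extra relation $e_2\cdot e_2=e_4$ forces $e_2\vtl e_2=e_4\neq0$ once \eqref{id2} on $\{e_2,e_1,e_1\}$ gives $\alpha_{22}=-1$. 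Comparing \eqref{id2} and \eqref{id4} on $\{e_1,e_1,e_2\}$ then yields $\alpha_{32}=0$ and $\alpha_{32}=-1$ respectively, a contradiction. Thus only $AD_3^{18}(\alpha)$ survives (note that in $AD_3^{18}$ the product $e_1\vtr e_1$ has no $e_2$-component, so this obstruction does not arise).

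In the surviving case I lift $AD_3^{18}(\alpha)$ to the generic table on $As_4^{13}$, with the structural shifts $e_1\vtr e_3=(-1+\alpha_{13})e_4$, $e_2\vtr e_1=(1+\alpha_{21})e_4$, $e_2\vtr e_2=(1+\alpha_{22})e_4$, $e_3\vtr e_1=(-1+\alpha_{31})e_4$ coming from $e_1\cdot e_3=e_3\cdot e_1=-e_4$ and $e_2\cdot e_1=e_2\cdot e_2=e_4$. Since the $e_1$-products coincide with those of $As_4^9$, the system produced by \eqref{id2}, \eqref{id3}, \eqref{id4} on $\{e_1,e_1,e_1\}$ is identical to the one in Theorem~\ref{theo_3.4} and forces $\alpha=\frac12$, $\alpha_{13}=-1$, $\alpha_{31}=2$; applying \eqref{id2} to $\{e_1,e_1,e_2\}$, $\{e_1,e_1,e_3\}$, $\{e_2,e_1,e_1\}$ then kills $\alpha_{32}$, $\alpha_{33}$, $\alpha_{23}$. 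This leaves the four constants $\alpha_{11},\alpha_{12},\alpha_{21},\alpha_{22}$ free.

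The decisive and most delicate step is the normalization, and it is precisely here that $As_4^{13}$ differs from $As_4^9$: its automorphisms keep the leading coefficients of $e_1$ and $e_2$ fixed at $1$ (a unipotent group with parameters $a,b,c,d$), so no parameter can be scaled away. Expanding $\varphi(e_i)\vtr\varphi(e_j)$ one finds $\alpha'_{22}=\alpha_{22}$, $\alpha'_{12}=\alpha_{12}+a(\alpha_{22}-1)$, $\alpha'_{21}=\alpha_{21}+a(\alpha_{22}+2)$, and a formula for $\alpha'_{11}$ in which $b,c,d$ all cancel. Choosing $a=(\alpha_{12}-\alpha_{21})/3$ makes $\alpha'_{12}=\alpha'_{21}=:\beta$, the relation visible in $AD_4^{32}$; setting $\alpha:=\alpha'_{11}$ and $\gamma:=\alpha_{22}$ produces $AD_4^{32}(\alpha,\beta,\gamma)$. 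To see these are pairwise non-isomorphic I would note that an automorphism preserving the normal form $\alpha_{12}=\alpha_{21}$ must satisfy $3a=0$, hence $a=0$, whence $\alpha'_{11}=\alpha_{11}$, $\alpha'_{12}=\alpha_{12}$, $\alpha'_{22}=\alpha_{22}$, so $(\alpha,\beta,\gamma)$ is a complete invariant. I expect the only real difficulty to be the bookkeeping in these basis-change formulas; the rigidity of the automorphism group is what preserves all three parameters and yields a single three-parameter family, rather than the several discrete families obtained for $As_4^9$.
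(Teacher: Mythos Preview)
Your proposal is correct and follows essentially the same route as the paper: establish that $\langle e_4\rangle$ is the anti-dendriform center via \eqref{id5}, run through the six quotient types $AD_3^{18}(\alpha)$--$AD_3^{23}$, eliminate $AD_3^{19}$--$AD_3^{22}$ by the $\alpha_{13}$-contradiction from \eqref{id3} exactly as for $As_4^9$, eliminate $AD_3^{23}$ by the $\alpha_{22}/\alpha_{32}$ clash, and then in the surviving case solve for $\alpha=\tfrac12$, $\alpha_{13}=-1$, $\alpha_{31}=2$ and normalize with $a=(\alpha_{12}-\alpha_{21})/3$. Your explicit verification that $a=0$ is forced on the stabilizer of the normal form (hence that $(\alpha,\beta,\gamma)$ is a complete invariant) is a detail the paper leaves implicit, and your diagnosis of why $AD_3^{23}$ dies here but survives for $As_4^9$ is more transparent than the paper's bare computation; otherwise the arguments coincide.
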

\begin{proof}
By considering \eqref{id5} for the following triples
$$\{e_1,e_2,e_2\},\
\{e_1,e_2,e_1\},\
\{e_1,e_3,e_2\}, \
\{e_2,e_3,e_1\},\
\{e_1,e_2,e_4\},\ \{e_4,e_1,e_2\},\
\{e_1,e_1,e_4\},$$ we get
$e_1\vtl e_4=0, \ e_4\vtr e_1=0, \ e_4\vtr e_2=0, \ e_2\vtl e_4=0, \ e_4\vtr e_4=0, \ e_4\vtl e_3=0, \ e_3\vtr e_4=0$, respectively.

By $e_4e_i=e_4\vtr e_i+e_4\vtl e_i=0$ and $e_ie_4=e_i\vtr e_4+e_i\vtl e_4=0$ these imply $e_4\vtl e_i=0$ and $e_i\vtr e_4=0$. Then it is easy to see that $\langle e_4\rangle$ is the center of the algebras $(As_4^{13}, \cdot)$ and $(As_4^{13}, \vtr, \vtl)$. If we take the algebra $As_4^{13}/\langle e_4\rangle,$ this algebra is a three-dimensional associative algebra $As^3_3$ with multiplication $e_1e_1=e_3$, and we know that any three-dimensional complex anti-dendriform algebra associated with the algebra $As_3^3$ is isomorphic to one of the pairwise non-isomorphic algebras $AD_3^{18}(\alpha)-AD_3^{23}.$

According to Proposition \ref{compatible} one can write

\textbf{Case $(As_4^{13}, \vtr, \vtl)/\langle e_4\rangle\cong AD_3^{18}(\alpha)$}
\[
\begin{cases}
e_1\vtr e_1=\alpha e_3+\alpha_{11}e_4,\\
e_1\vtr e_2=\alpha_{12}e_4,\\
e_1\vtr e_3=(-1+\alpha_{13})e_4,\\
e_2\vtr e_1=(1+\alpha_{21})e_4,\\
e_2\vtr e_2=(1+\alpha_{22})e_4,\\
e_2\vtr e_3=\alpha_{23}e_4,\\
e_3\vtr e_1=(-1+\alpha_{31})e_4,\\
e_3\vtr e_2=\alpha_{32}e_4,\\
e_3\vtr e_3=\alpha_{33}e_4,
\end{cases}
\qquad
\begin{cases}
e_1\vtl e_1=(1-\alpha)e_3-\alpha_{11}e_4,\\
e_1\vtl e_2=-\alpha_{12}e_4,\\
e_1\vtl e_3=-\alpha_{13}e_4,\\
e_2\vtl e_1=-\alpha_{21}e_4,\\
e_2\vtl e_2=-\alpha_{22}e_4,\\
e_2\vtl e_3=-\alpha_{23}e_4,\\
e_3\vtl e_1=-\alpha_{31}e_4,\\
e_3\vtl e_2=-\alpha_{32}e_4,\\
e_3\vtl e_3=-\alpha_{33}e_4.
\end{cases}
\]

Considering identities \eqref{id1}-\eqref{id4} for the triple $\{e_1,e_1,e_1\}$, we get the following system of equations:
\[
\begin{cases}
(-1+\alpha_{13})(-1+\alpha)-\alpha_{31}\alpha=0,\\
(-1+\alpha_{31})+(-1+\alpha_{13})\alpha=0,\\
-\alpha_{13}+(-1+\alpha_{13}) \alpha=0, \\
-\alpha_{31}(-1+\alpha)+(-1+\alpha_{13})\alpha=0.
\end{cases}
\]
When $\alpha\neq\frac12$, we get a contradiction. Hence, we can assume $\alpha=\frac12$. Then the system has only one solution $\alpha=\frac12, \ \alpha_{13}=-1, \ \alpha_{31}=2$.

By considering \eqref{id2} for the triples $\{e_1,e_1,e_2\}, \ \{e_1,e_1,e_3\}$ and $\{e_2,e_1,e_1\}$, we get restrictions
$$\alpha_{32}=\alpha_{33}=\alpha_{23}=0.$$

Thus, the multiplication table of the algebra has the following form:

\[
\begin{cases}
e_1\vtr e_1=\frac12 e_3+\alpha_{11}e_4,\\
e_1\vtr e_2=\alpha_{12}e_4,\\
e_1\vtr e_3=-2e_4,\\
e_2\vtr e_1=(1+\alpha_{21})e_4,\\
e_2\vtr e_2=(1+\alpha_{22})e_4,\\
e_3\vtr e_1=e_4,
\end{cases}
\qquad
\begin{cases}
e_1\vtl e_1=\frac12e_3-\alpha_{11}e_4,\\
e_1\vtl e_2=-\alpha_{12}e_4,\\
e_1\vtl e_3=e_4,\\
e_2\vtl e_1=-\alpha_{21}e_4,\\
e_2\vtl e_2=-\alpha_{22}e_4,\\
e_3\vtl e_1=-2e_4.
\end{cases}
\]

Let us consider the general change of the generators of basis:
\[\varphi(e_1)=e_1+ae_2+be_3+ce_4,\ \varphi(e_2)=e_2+ae_3+de_4,\ \varphi(e_3)=e_3+(a^2+a-2b)e_4,\ \varphi(e_4)=e_4.\]

We express the new basis elements $\{\varphi(e_1), \varphi(e_2),\varphi(e_3), \varphi(e_4)\}$  via the basis elements $\{e_1, e_2, e_3, e_4\}.$  By verifying all the multiplications of the algebra in the new basis we obtain the relations between the parameters $\{\alpha'_{11}, \alpha'_{12},\alpha'_{21}, \alpha'_{22}\}$ and $\{\alpha_{11}, \alpha_{12}, \alpha_{21}, \alpha_{22}\}$:
$$
\begin{array}{lll}
\varphi(e_1)\vtr \varphi(e_1)=\frac{1}{2}\varphi(e_3)+\alpha^\prime_{11}\varphi(e_4) & \Rightarrow & \alpha^\prime_{11}=\alpha_{11}+a(\frac12+\alpha_{12}+\alpha_{21})+a^2(\frac{1}{2}+\alpha_{22}),\\[1mm]
\varphi(e_1)\vtr \varphi(e_2)=\alpha^\prime_{12}\varphi(e_4) & \Rightarrow & \alpha^\prime_{12}=\alpha_{12}+a(\alpha_{22}-1),\\[1mm]
\varphi(e_2)\vtr \varphi(e_1)=(1+\alpha^\prime_{21})\varphi(e_4) & \Rightarrow & \alpha^\prime_{21}=\alpha_{21}+a(\alpha_{22}+2),\\[1mm]
\varphi(e_2)\vtr \varphi(e_2)=(1+\alpha^\prime_{22})\varphi(e_4) & \Rightarrow & \alpha^\prime_{22}=\alpha_{22}.\\[1mm]
\end{array}
$$
If choosing $a=\frac{\alpha_{12}-\alpha_{21}}{3},$ then we can reduce $\alpha^\prime_{12}=\alpha^\prime_{21}$ we obtain the algebra $AD_4^{51}(\alpha,\beta,\gamma)$.

Now if $(As_4^{13}, \vtr, \vtl)/\langle e_4\rangle$ ante-dendriform algebra is isomorphic to one of the algebras $AD_3^{19}-AD_3^{23}$, then we will proof there is not a four-dimensional complex anti-dendriform algebra associated to the associative algebra $As_4^{13}.$

\textbf{Case $(As_4^{13}, \vtr, \vtl)/\langle e_4\rangle\cong AD_3^{19}$}
\[
\begin{cases}
e_1\vtr e_1=\alpha_{11}e_4,\\
e_1\vtr e_2=\alpha_{12}e_4,\\
e_1\vtr e_3=(-1+\alpha_{13})e_4,\\
e_2\vtr e_1=e_3+(1+\alpha_{21})e_4,\\
e_2\vtr e_2=(1+\alpha_{22})e_4,\\
e_2\vtr e_3=\alpha_{23}e_4,\\
e_3\vtr e_1=(-1+\alpha_{31})e_4,\\
e_3\vtr e_2=\alpha_{32}e_4,\\
e_3\vtr e_3=\alpha_{33}e_4,
\end{cases}
\qquad
\begin{cases}
e_1\vtl e_1=e_3-\alpha_{11}e_4,\\
e_1\vtl e_2=-\alpha_{12}e_4,\\
e_1\vtl e_3=-\alpha_{13}e_4,\\
e_2\vtl e_1=-e_3-\alpha_{21}e_4,\\
e_2\vtl e_2=-\alpha_{22}e_4,\\
e_2\vtl e_3=-\alpha_{23}e_4,\\
e_3\vtl e_1=-\alpha_{31}e_4,\\
e_3\vtl e_2=-\alpha_{32}e_4,\\
e_3\vtl e_3=-\alpha_{33}e_4.
\end{cases}
\]

Considering the identity \eqref{id3} for the triples $\{e_1,e_1,e_1\}$
 and $\{e_1,e_2,e_1\}$, we obtain the following restrictions, respectively:
\[\alpha_{13}=0, \ \ \alpha_{13}=1.\]
This implies a contradiction. Hence, there is not a compatible anti-dendriform algebra structure on $As_4^{13}.$

\textbf{Case $(As_4^{13}, \vtr, \vtl)/\langle e_4\rangle\cong AD_3^{20}(\alpha)$}
\[
\begin{cases}
e_1\vtr e_1=\alpha e_3+\alpha_{11}e_4,\\
e_1\vtr e_2=e_3+\alpha_{12}e_4,\\
e_1\vtr e_3=(-1+\alpha_{13})e_4,\\
e_2\vtr e_1=-e_3+(1+\alpha_{21})e_4,\\
e_2\vtr e_2=(1+\alpha_{22})e_4,\\
e_2\vtr e_3=\alpha_{23}e_4,\\
e_3\vtr e_1=(-1+\alpha_{31})e_4,\\
e_3\vtr e_2=\alpha_{32}e_4,\\
e_3\vtr e_3=\alpha_{33}e_4,
\end{cases}
\qquad
\begin{cases}
e_1\vtl e_1=(1-\alpha)e_3-\alpha_{11}e_4,\\
e_1\vtl e_2=-e_3-\alpha_{12}e_4,\\
e_1\vtl e_3=-\alpha_{13}e_4,\\
e_2\vtl e_1=e_3-\alpha_{21}e_4,\\
e_2\vtl e_2=-\alpha_{22}e_4,\\
e_2\vtl e_3=-\alpha_{23}e_4,\\
e_3\vtl e_1=-\alpha_{31}e_4,\\
e_3\vtl e_2=-\alpha_{32}e_4,\\
e_3\vtl e_3=-\alpha_{33}e_4.
\end{cases}
\]

Consider \eqref{id3} for the triples $\{e_1,e_1,e_2\}$ and $\{e_1,e_1,e_1\}$ we obtain $\alpha_{13}=1$ and $\alpha_{13}=0$, which derives a contradiction. Hence, there is not such a case.

\textbf{Case $(As_4^{13}, \vtr, \vtl)/\langle e_4\rangle\cong AD_3^{21}(\alpha)$}
\[
\begin{cases}
e_1\vtr e_1=\alpha_{11}e_4,\\
e_1\vtr e_2=e_3+\alpha_{12}e_4,\\
e_1\vtr e_3=(-1+\alpha_{13})e_4,\\
e_2\vtr e_1=\alpha e_3+(1+\alpha_{21})e_4,\\
e_2\vtr e_2=(1+\alpha_{22})e_4,\\
e_2\vtr e_3=\alpha_{23}e_4,\\
e_3\vtr e_1=(-1+\alpha_{31})e_4,\\
e_3\vtr e_2=\alpha_{32}e_4,\\
e_3\vtr e_3=\alpha_{33}e_4,
\end{cases}
\qquad
\begin{cases}
e_1\vtl e_1=e_3-\alpha_{11}e_4,\\
e_1\vtl e_2=-e_3-\alpha_{12}e_4,\\
e_1\vtl e_3=-\alpha_{13}e_4,\\
e_2\vtl e_1=-\alpha e_3-\alpha_{21}e_4,\\
e_2\vtl e_2=-\alpha_{22}e_4,\\
e_2\vtl e_3=-\alpha_{23}e_4,\\
e_3\vtl e_1=-\alpha_{31}e_4,\\
e_3\vtl e_2=-\alpha_{32}e_4,\\
e_3\vtl e_3=-\alpha_{33}e_4.
\end{cases}
\]
where $\alpha\neq-1$.

By consediring the identity \eqref{id3} for the triples $\{e_1,e_1,e_1\}$ and $\{e_1,e_1,e_2\}$ we get the contradiction $\alpha_{13}=0$ and $\alpha_{13}=1$. Therefore, there is not such a case.

\textbf{Case $(As_4^{13}, \vtr, \vtl)/\langle e_4\rangle\cong AD_3^{22}(\alpha, \beta)$}
\[
\begin{cases}
e_1\vtr e_1=\alpha e_3+\alpha_{11}e_4,\\
e_1\vtr e_2=\alpha_{12}e_4,\\
e_1\vtr e_3=(-1+\alpha_{13})e_4,\\
e_2\vtr e_1=\beta e_3+(1+\alpha_{21})e_4,\\
e_2\vtr e_2=e_3+(1+\alpha_{22})e_4,\\
e_2\vtr e_3=\alpha_{23}e_4,\\
e_3\vtr e_1=(-1+\alpha_{31})e_4,\\
e_3\vtr e_2=\alpha_{32}e_4,\\
e_3\vtr e_3=\alpha_{33}e_4,
\end{cases}
\qquad
\begin{cases}
e_1\vtl e_1=(1-\alpha)e_3-\alpha_{11}e_4,\\
e_1\vtl e_2=-\alpha_{12}e_4,\\
e_1\vtl e_3=-\alpha_{13}e_4,\\
e_2\vtl e_1=-\beta e_3-\alpha_{21}e_4,\\
e_2\vtl e_2=-e_3-\alpha_{22}e_4,\\
e_2\vtl e_3=-\alpha_{23}e_4,\\
e_3\vtl e_1=-\alpha_{31}e_4,\\
e_3\vtl e_2=-\alpha_{32}e_4,\\
e_3\vtl e_3=-\alpha_{33}e_4.
\end{cases}
\]

By considering \eqref{id3} for the triples $\{e_1,e_2,e_2\}$ and $\{e_1,e_1,e_1\}$ we get
$$\alpha_{13}=1,\ \alpha_{13}=0.$$
This implies a contradiction. Hence, it means that there is no anti-dendriform algebra associated to
the algebra $As_4^{13}$ and satisfying the condition $(As_4^{13}, \vtr, \vtl)/\langle e_4\rangle\cong AD_3^{22}(\alpha,\beta)$.

\textbf{Case $(As_4^{13}, \vtr, \vtl)/\langle e_4\rangle\cong AD_3^{23}$}
\[
\begin{cases}
e_1\vtr e_1=e_2+\alpha_{11}e_4,\\
e_1\vtr e_2=\alpha_{12}e_4,\\
e_1\vtr e_3=(-1+\alpha_{13})e_4,\\
e_2\vtr e_1=(1+\alpha_{21})e_4,\\
e_2\vtr e_2=(1+\alpha_{22})e_4,\\
e_2\vtr e_3=\alpha_{23}e_4,\\
e_3\vtr e_1=(-1+\alpha_{31})e_4,\\
e_3\vtr e_2=\alpha_{32}e_4,\\
e_3\vtr e_3=\alpha_{33}e_4,
\end{cases}
\qquad
\begin{cases}
e_1\vtl e_1=-e_2+e_3-\alpha_{11}e_4,\\
e_1\vtl e_2=-\alpha_{12}e_4,\\
e_1\vtl e_3=-\alpha_{13}e_4,\\
e_2\vtl e_1=-\alpha_{21}e_4,\\
e_2\vtl e_2=-\alpha_{22}e_4,\\
e_2\vtl e_3=-\alpha_{23}e_4,\\
e_3\vtl e_1=-\alpha_{31}e_4,\\
e_3\vtl e_2=-\alpha_{32}e_4,\\
e_3\vtl e_3=-\alpha_{33}e_4.
\end{cases}
\]

By considering \eqref{id4} for the triples $\{e_3,e_1,e_1\}$, $\{e_2,e_1,e_1\}$ and $\{e_1,e_1,e_2\}$, we reduce
$$\alpha_{32}=0, \ \alpha_{22}=-1,\ 0=-1,$$
which is a contradiction. Hence, there is not such a case.

\end{proof}

\begin{thm}\label{theo_3.7}
Any four-dimensional complex anti-dendriform algebra associated to the algebra $As_4^{14}$ is isomorphic to one of the following pairwise non-isomorphic algebras:

$AD_4^{33}: \ e_1\vtr e_2=e_4, \ e_3\vtr e_1=e_4, \ e_1\vtl e_3=e_4, \ e_2\vtl e_1=-e_4, \ e_2\vtl e_2=e_4;$

$AD_4^{34}: \ \begin{cases}e_1\vtr e_2=e_4, \ e_2\vtr e_1=-e_4, \ e_3\vtr e_3=e_4,\\
e_1\vtl e_3=e_4, \ e_2\vtl e_2=e_4, \ e_3\vtl e_1=e_4, \ e_3\vtl e_3=-e_4;
\end{cases}$

$AD_4^{35}: \ \begin{cases}e_1\vtr e_1=e_4, \ e_1\vtr e_2=e_4, \ e_2\vtr e_1=-e_4, \ e_3\vtr e_3=e_4,\\
e_1\vtl e_1=-e_4, \ e_1\vtl e_3=e_4, \ e_2\vtl e_2=e_4, \ e_3\vtl e_1=e_4, \ e_3\vtl e_3=-e_4;
\end{cases}$

$AD_4^{36}: \ e_1\vtr e_2=e_4, \ e_1\vtr e_3=e_4, \ e_2\vtr e_1=-e_4, \ e_2\vtr e_2=e_4, \ e_3\vtr e_1=e_4;$

$AD_4^{37}(\alpha): \ \begin{cases}e_1\vtr e_1=e_4,\ e_1\vtr e_2=\alpha e_4, \ e_2\vtr e_1=-\alpha e_4, \ e_2\vtr e_2=e_4, \ e_3\vtr e_3=e_4, \ \alpha\in \mathbb{C}\\
e_1\vtl e_1=-e_4,\ e_1\vtl e_2=(1-\alpha) e_4, \ e_1\vtl e_3=e_4, \\ e_2\vtl e_1=(\alpha-1) e_4, \ e_3\vtl e_1=e_4, \ e_3\vtl e_3=-e_4;
\end{cases}$

$AD_4^{38}(\alpha): \ \begin{cases}
e_1\vtr e_2=e_3+e_4,\ e_2\vtr e_1=-e_3+(-1+\alpha )e_4,\ e_2\vtr e_2=\frac12e_4,\ e_3\vtr e_1=e_4,\\
e_1\vtl e_2=-e_3,\  e_1\vtl e_3=e_4,\ e_2\vtl e_1=e_3-\alpha e_4,\ e_2\vtl e_2=\frac12e_4, \ \alpha\neq0;
\end{cases}$

$AD_4^{39}(\alpha,\beta): \ \begin{cases}
e_1\vtr e_1=\alpha e_4,\ e_1\vtr e_2=e_3+e_4,\ e_2\vtr e_1=-e_3-e_4,\\ e_2\vtr e_2=(1+\beta)e_4,\ e_3\vtr e_1=e_4,\\
e_1\vtl e_1=-\alpha e_4,\ e_1\vtl e_2=-e_3,\ e_1\vtl e_3=e_4,\ e_2\vtl e_1=e_3,\ e_2\vtl e_2=-\beta e_4;
\end{cases}$

$AD_4^{40}(\alpha,\beta): \ \begin{cases}
e_1\vtr e_1=e_3,\ e_1\vtr e_2=(1+\alpha)e_4,\ e_2\vtr e_1=(-1+\alpha)e_4,\\ e_2\vtr e_2=(1+\beta)e_4,\ e_3\vtr e_1=e_4, \\
e_1\vtl e_1=-e_3,\ e_1\vtl e_2=-\alpha e_4,\ e_1\vtl e_3=e_4,\ e_2\vtl e_1=-\alpha e_4,\ e_2\vtl e_2=-\beta e_4;\end{cases}$

$AD_4^{41}(\alpha,\beta,\gamma): \ \begin{cases}
e_1\vtr e_1=\alpha e_4,\ e_1\vtr e_2=e_3+e_4,\ e_2\vtr e_1=(-1+\beta)e_4,\\ e_2\vtr e_2=(1+\gamma)e_4,\ e_3\vtr e_1=e_4,\\
e_1\vtl e_1=-\alpha e_4,\ e_1\vtl e_2=-e_3,\ e_1\vtl e_3=e_4,\ e_2\vtl e_1=-\beta e_4,\ e_2\vtl e_2=-\gamma e_4;
\end{cases}
$

$
AD_4^{42}[\lambda](\alpha,\beta,\gamma): \begin{cases}
e_1\vtr e_1=e_3, e_1\vtr e_2=\lambda e_3+(1+\alpha)e_4, e_2\vtr e_1=(-1+\beta)e_4,\\ e_2\vtr e_2=e_3+(1+\gamma)e_4, e_3\vtr e_1=e_4,\
e_1\vtl e_1=-e_3,\ e_1\vtl e_2=-\lambda e_3-\alpha e_4,\\ e_1\vtl e_3=e_4,\ e_2\vtl e_1=-\beta e_4,\ e_2\vtl e_2=-e_3-\gamma e_4.
\end{cases}
$
where $AD_4^{38}(0)=AD_4^{39}(0,-\frac12).$
\end{thm}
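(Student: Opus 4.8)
The plan is to reproduce, for the algebra $As_4^{14}$, the strategy used in the proofs of Theorems \ref{theo_3.1}--\ref{theo_3.6}. First I would establish that $\langle e_4\rangle$ is simultaneously the center of $(As_4^{14},\cdot)$ and of $(As_4^{14},\vtr,\vtl)$. To do this I would feed identity \eqref{id5} a family of triples $\{e_i,e_j,e_k\}$ chosen so that one of the inner products $e_ie_j$, $e_je_k$ equals $e_4$; each such triple forces a relation of the form $e_4\vtr e_m=0$ or $e_m\vtl e_4=0$, together with $e_4\vtr e_4=0$. Combining these with $e_4e_m=e_4\vtr e_m+e_4\vtl e_m=0$ and $e_me_4=e_m\vtr e_4+e_m\vtl e_4=0$ (valid because $e_4$ is central in the associative algebra) then kills every product in which $e_4$ appears, so $\langle e_4\rangle\subseteq {\rm Z}_{AD}(As_4^{14})$.

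Since every product in $As_4^{14}$ lies in $\langle e_4\rangle$, the quotient $As_4^{14}/\langle e_4\rangle$ is the three-dimensional abelian associative algebra. By Proposition \ref{compatible} the induced structure $(As_4^{14}/\langle e_4\rangle,\vtr,\vtl)$ is a compatible anti-dendriform structure on the abelian algebra, hence isomorphic to one of $AD_3^3$--$AD_3^7(\lambda)$, and I would organize the proof around these five cases. For each case I lift the quotient relations to $As_4^{14}$ by inserting unknown coefficients $\alpha_{ij}$ in front of $e_4$, producing a family of candidate structures whose reductions modulo $\langle e_4\rangle$ reproduce the chosen three-dimensional anti-dendriform algebra.

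In the case $AD_3^3$ the induced quotient structure is trivial, so $(As_4^{14},\vtr,\vtl)$ is $2$-nilpotent on three generators; exactly as in Theorem \ref{theo_3.1} this makes $(As_4^{14},\vtr)$ an indecomposable associative $2$-nilpotent algebra on three generators, and the five such algebras recorded in Theorem \ref{table} yield the representatives $AD_4^{33}$--$AD_4^{37}(\alpha)$. In each of the remaining cases $AD_3^4,AD_3^5,AD_3^6,AD_3^7(\lambda)$ I would impose the anti-dendriform identities \eqref{id1}--\eqref{id7} on carefully selected triples to annihilate or relate the $\alpha_{ij}$ --- for instance the triples $\{e_1,e_1,e_i\}$ and $\{e_i,e_1,e_1\}$ evaluated through \eqref{id3} and \eqref{id6} typically force $\alpha_{3i}=\alpha_{i3}=0$ --- and then normalize the surviving parameters using the automorphisms $\varphi(e_1)=ae_1+be_2-\tfrac{b^2}{2a}e_3+ce_4$, $\varphi(e_2)=ae_2-be_3+de_4$, $\varphi(e_3)=ae_3+ee_4$ from Theorem \ref{table}. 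Splitting into subcases according to whether a distinguished coefficient (typically $\alpha_{22}$ or $\alpha_{11}$) vanishes, and choosing $a$, $b$ to clear the remaining coefficients, produces $AD_4^{38}(\alpha)$--$AD_4^{42}[\lambda](\alpha,\beta,\gamma)$.

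The main obstacle I anticipate lies in the normalization step for the last cases. The automorphism above mixes $e_1,e_2,e_3$ nontrivially --- the $-\tfrac{b^2}{2a}e_3$ term in $\varphi(e_1)$ couples the transformation laws of the parameters --- so one must first determine which polynomial combinations of the $\alpha_{ij}$ are genuine invariants before deciding which can be set to zero; a naive coefficient count risks either over- or under-counting the moduli. A related subtlety is proving pairwise non-isomorphism of the final list and detecting the degenerate coincidence $AD_4^{38}(0)=AD_4^{39}(0,-\tfrac12)$ flagged in the statement, which cannot be read off the generic reduction and must instead be verified by matching the two explicit multiplication tables under a specific automorphism.
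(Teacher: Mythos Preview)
Your proposal is correct and follows essentially the same approach as the paper's proof: establishing that $\langle e_4\rangle$ is the anti-dendriform center via identity \eqref{id5}, reducing to the five three-dimensional quotient cases $AD_3^3$--$AD_3^7(\lambda)$ by Proposition~\ref{compatible}, handling the $AD_3^3$ case through the $2$-nilpotent classification, and normalizing the remaining cases using the automorphisms of $As_4^{14}$ from Theorem~\ref{table}. The only minor divergence is in the specific identities invoked to kill the $\alpha_{3i}$ and $\alpha_{i3}$---the paper relies chiefly on \eqref{id2} and \eqref{id4} (with triples such as $\{e_1,e_2,e_i\}$ when $e_1\vtr e_2$ carries the $e_3$-term, rather than $\{e_1,e_1,e_i\}$) whereas you cite \eqref{id3} and \eqref{id6}---but this is inessential since the identities are interchangeable for that purpose.
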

\begin{proof}

By considering \eqref{id5} for the following triples
$$\{e_3,e_1,e_1\},\
\{e_1,e_2,e_2\},\
\{e_1,e_3,e_2\}, \
\{e_2,e_3,e_1\},\
\{e_2,e_2,e_3\},\ \{e_3,e_2,e_2\},\
\{e_2,e_2,e_4\},$$ we get
$e_4\vtr e_1=0, \ e_1\vtl e_4=0, \ e_4\vtr e_2=0, \ e_2\vtl e_4=0, \ e_4\vtr e_3=0, \ e_3\vtl e_4=0, \ e_4\vtr e_4=0,$ respectively.

By $e_4e_i=e_4\vtr e_i+e_4\vtl e_i=0$ and $e_ie_4=e_i\vtr e_4+e_i\vtl e_4=0$ these imply $e_4\vtl e_i=0$ and $e_i\vtr e_4=0$. Then it is easy to see that $\langle e_4\rangle$ is the center of the algebras $(As_4^{14}, \cdot)$ and $(As_4^{14}, \vtr, \vtl)$. If we take the algebra $As_4^{14}/\langle e_4\rangle,$ this algebra is a three-dimensional associative Abelian algebra, and we know that any three-dimensional complex anti-dendriform algebra associated with the abelian algebra is isomorphic to one of the pairwise non-isomorphic algebras $AD_3^3-AD_3^7(\lambda).$

According to Proposition \ref{compatible} one can write

\textbf{Case $(As_4^{14}, \vtr, \vtl)/\langle e_4\rangle\cong AD_3^{3}$}
\[
\begin{cases}
e_1\vtr e_1=\alpha_{11}e_4,\\
e_1\vtr e_2=(1+\alpha_{12})e_4,\\
e_1\vtr e_3=(1+\alpha_{13})e_4,\\
e_2\vtr e_1=(-1+\alpha_{21})e_4,\\
e_2\vtr e_2=(1+\alpha_{22})e_4,\\
e_2\vtr e_3=\alpha_{23}e_4,\\
e_3\vtr e_1=(1+\alpha_{31})e_4,\\
e_3\vtr e_2=\alpha_{32}e_4,\\
e_3\vtr e_3=\alpha_{33}e_4,
\end{cases}
\qquad
\begin{cases}
e_1\vtl e_1=-\alpha_{11}e_4,\\
e_1\vtl e_2=-\alpha_{12}e_4,\\
e_1\vtl e_3=-\alpha_{13}e_4,\\
e_2\vtl e_1=-\alpha_{21}e_4,\\
e_2\vtl e_2=-\alpha_{22}e_4,\\
e_2\vtl e_3=-\alpha_{23}e_4,\\
e_3\vtl e_1=-\alpha_{31}e_4,\\
e_3\vtl e_2=-\alpha_{32}e_4,\\
e_3\vtl e_3=-\alpha_{33}e_4.
\end{cases}
\]

It is not difficult to see that $(As_4^{14}, \vtr, \vtl)$ are 2-nilpotent and have three generator elements. Thus we have $(x\vtr y)\vtr z=x\vtr (y\vtr z)=0$ which implies that $(As_4^{14}, \vtr)$ is also associative 2-nilpotent algebra and has three generator elements. According to Theorem \ref{table} there are five non-isomorphic four-dimensional indecomposable associative 2-nilpotent and the three generated algebras. Hence, we get $AD_4^{33}-AD_4^{37}(\alpha)$ algebras.

\textbf{Case $(As_4^{14}, \vtr, \vtl)/\langle e_4\rangle\cong AD_3^{4}$}
\[
\begin{cases}
e_1\vtr e_1=\alpha_{11}e_4,\\
e_1\vtr e_2=e_3+(1+\alpha_{12})e_4,\\
e_1\vtr e_3=(1+\alpha_{13})e_4,\\
e_2\vtr e_1=-e_3+(-1+\alpha_{21})e_4,\\
e_2\vtr e_2=(1+\alpha_{22})e_4,\\
e_2\vtr e_3=\alpha_{23}e_4,\\
e_3\vtr e_1=(1+\alpha_{31})e_4,\\
e_3\vtr e_2=\alpha_{32}e_4,\\
e_3\vtr e_3=\alpha_{33}e_4,
\end{cases}
\qquad
\begin{cases}
e_1\vtl e_1=-\alpha_{11}e_4,\\
e_1\vtl e_2=-e_3-\alpha_{12}e_4,\\
e_1\vtl e_3=-\alpha_{13}e_4,\\
e_2\vtl e_1=e_3-\alpha_{21}e_4,\\
e_2\vtl e_2=-\alpha_{22}e_4,\\
e_2\vtl e_3=-\alpha_{23}e_4,\\
e_3\vtl e_1=-\alpha_{31}e_4,\\
e_3\vtl e_2=-\alpha_{32}e_4,\\
e_3\vtl e_3=-\alpha_{33}e_4.
\end{cases}
\]

Considering \eqref{id4} for the triples $\{e_1,e_1,e_2\}$, $\{e_2,e_2,e_1\}$
 and $\{e_1,e_2,e_i\}, \ i=1,2,3$, we can obtain
 $$\alpha_{13}=-1, \ \alpha_{23}=0, \ \alpha_{3i}=0, \ i=1,2,3.$$

By the basis change $e'_3=e_3+\alpha_{12}e_4$ the multiplication table of the algebra has the following form:
\[
\begin{cases}
e_1\vtr e_1=\alpha_{11}e_4,\\
e_1\vtr e_2=e_3+e_4,\\
e_2\vtr e_1=-e_3+(-1+\alpha_{21})e_4,\\
e_2\vtr e_2=(1+\alpha_{22})e_4,\\
e_3\vtr e_1=e_4,
\end{cases}
\qquad
\begin{cases}
e_1\vtl e_1=-\alpha_{11}e_4,\\
e_1\vtl e_2=-e_3,\\
e_1\vtl e_3=e_4,\\
e_2\vtl e_1=e_3-\alpha_{21}e_4,\\
e_2\vtl e_2=-\alpha_{22}e_4.
\end{cases}
\]

Let us consider the general change of the generators of basis:
\[\varphi(e_1)=ae_1+be_2-\frac{b^2}{2a}e_3+ce_4,\ \varphi(e_2)=ae_2-be_3+de_4,\ \varphi(e_3)=ae_3+ee_4,\ \varphi(e_4)=a^2e_4,\]
where $a\neq0$.

We express the new basis elements $\{\varphi(e_1), \varphi(e_2),\varphi(e_3), \varphi(e_4)\}$  via the basis elements $\{e_1, e_2, e_3, e_4\}.$  By verifying all the multiplications of the algebra in the new basis we obtain the relations between the parameters $\{\alpha'_{11}, \alpha'_{21}, \alpha'_{22}\}$ and $\{\alpha_{11}, \alpha_{21}, \alpha_{22}\}$:

$$
\begin{array}{lll}
\varphi(e_1)\vtr \varphi(e_2)=\varphi(e_3)+\varphi(e_4) & \Rightarrow & a^2=a, \  b(1+\alpha_{22})=e,\\[1mm]
\varphi(e_1)\vtr \varphi(e_1)=\alpha^\prime_{11}\varphi(e_4) & \Rightarrow & \alpha^\prime_{11}=\alpha_{11}+b\alpha_{21}+b^2(\frac{1}{2}+\alpha_{22}),\\[1mm]
\varphi(e_1)\vtr \varphi(e_2)=-\varphi(e_3)+(-1+\alpha^\prime_{21})\varphi(e_4) & \Rightarrow & \alpha^\prime_{21}=\alpha_{21}+b(1+2\alpha_{22}),\\[1mm]
\varphi(e_2)\vtr \varphi(e_2)=(1+\alpha^\prime_{22})\varphi(e_4) & \Rightarrow & \alpha^\prime_{22}=\alpha_{22}.\\[1mm]
\end{array}
$$

We have the following cases:
\begin{itemize}
\item[(a)] Let $\alpha_{22}=-\frac12$. Then

If $\alpha_{21}=0$, we obtain the algebra $AD_4^{39}(\alpha, -\frac12)$.

If $\alpha_{21}\neq0$, then by choosing $b=-\frac{\alpha_{11}}{\alpha_{21}}$ we obtain $\alpha^\prime_{11}=0$ and we get the algebra $AD_4^{38}(\alpha), \ \alpha\neq0$.
\item[(b)] Let $\alpha_{22}\neq-\frac12$. Then choosing $b=-\frac{\alpha_{21}}{1+2\alpha_{22}}$ we derive $\alpha^\prime_{21}=0$ and  we obtain the algebra $AD_4^{39}(\alpha,\beta), \ \beta\neq-\frac12$.
\end{itemize}

\textbf{Case $(As_4^{14}, \vtr, \vtl)/\langle e_4\rangle\cong AD_3^{5}$}
\[
\begin{cases}
e_1\vtr e_1=e_3+\alpha_{11}e_4,\\
e_1\vtr e_2=(1+\alpha_{12})e_4,\\
e_1\vtr e_3=(1+\alpha_{13})e_4,\\
e_2\vtr e_1=(-1+\alpha_{21})e_4,\\
e_2\vtr e_2=(1+\alpha_{22})e_4,\\
e_2\vtr e_3=\alpha_{23}e_4,\\
e_3\vtr e_1=(1+\alpha_{31})e_4,\\
e_3\vtr e_2=\alpha_{32}e_4,\\
e_3\vtr e_3=\alpha_{33}e_4,
\end{cases}
\qquad
\begin{cases}
e_1\vtl e_1=-e_3-\alpha_{11}e_4,\\
e_1\vtl e_2=-\alpha_{12}e_4,\\
e_1\vtl e_3=-\alpha_{13}e_4,\\
e_2\vtl e_1=-\alpha_{21}e_4,\\
e_2\vtl e_2=-\alpha_{22}e_4,\\
e_2\vtl e_3=-\alpha_{23}e_4,\\
e_3\vtl e_1=-\alpha_{31}e_4,\\
e_3\vtl e_2=-\alpha_{32}e_4,\\
e_3\vtl e_3=-\alpha_{33}e_4.
\end{cases}
\]

Considering \eqref{id2} for the triples $\{e_i,e_1,e_1\}, \ i=1,2,3$, we obtain $\alpha_{13}=-1, \ \alpha_{23}=\alpha_{33}=0$. From \eqref{id4} for the triples $\{e_1,e_1,e_i\}, \ i=1,2$, we get $\alpha_{31}=\alpha_{32}=0$. By the basis change $e'_3=e_3+\alpha_{11}e_4$ we get $\alpha_{11}=0$.

Thus, we have the algebra
\[
\begin{cases}
e_1\vtr e_1=e_3,\\
e_1\vtr e_2=(1+\alpha_{12})e_4,\\
e_2\vtr e_1=(-1+\alpha_{21})e_4,\\
e_2\vtr e_2=(1+\alpha_{22})e_4,\\
e_3\vtr e_1=e_4,
\end{cases}
\qquad
\begin{cases}
e_1\vtl e_1=-e_3,\\
e_1\vtl e_2=-\alpha_{12}e_4,\\
e_1\vtl e_3=e_4,\\
e_2\vtl e_1=-\alpha_{21}e_4,\\
e_2\vtl e_2=-\alpha_{22}e_4.
\end{cases}
\]

Let us consider the general change and by verifying all the multiplications of the algebra in the new basis we obtain the relations between the parameters $\{\alpha'_{12}, \alpha'_{21}, \alpha'_{22}\}$ and $\{\alpha_{12}, \alpha_{21}, \alpha_{22}\}$:
$$
\begin{array}{lll}
\varphi(e_1)\vtr \varphi(e_1)=\varphi(e_3) & \Rightarrow & a^2=a, \  b(\alpha_{12}+\alpha_{21})+b^2(\frac12+\alpha_{22})=e,\\[1mm]
\varphi(e_1)\vtr \varphi(e_2)=(1+\alpha^\prime_{12})\varphi(e_4) & \Rightarrow & \alpha^\prime_{12}=\alpha_{12}+b(1+\alpha_{22}),\\[1mm]
\varphi(e_2)\vtr \varphi(e_1)=(-1+\alpha^\prime_{21})\varphi(e_4) & \Rightarrow & \alpha^\prime_{21}=\alpha_{21}+b\alpha_{22},\\[1mm]
\varphi(e_2)\vtr \varphi(e_2)=(1+\alpha^\prime_{22})\varphi(e_4) & \Rightarrow & \alpha^\prime_{22}=\alpha_{22}.\\[1mm]
\end{array}
$$
If choosing $b=\alpha_{21}-\alpha_{12},$ then we can reduce $\alpha^\prime_{12}=\alpha^\prime_{21}$ we obtain the algebra $AD_4^{40}(\alpha,\beta)$.

\textbf{Case $(As_4^{14}, \vtr, \vtl)/\langle e_4\rangle\cong AD_3^{6}$}
\[
\begin{cases}
e_1\vtr e_1=\alpha_{11}e_4,\\
e_1\vtr e_2=e_3+(1+\alpha_{12})e_4,\\
e_1\vtr e_3=(1+\alpha_{13})e_4,\\
e_2\vtr e_1=(-1+\alpha_{21})e_4,\\
e_2\vtr e_2=(1+\alpha_{22})e_4,\\
e_2\vtr e_3=\alpha_{23}e_4,\\
e_3\vtr e_1=(1+\alpha_{31})e_4,\\
e_3\vtr e_2=\alpha_{32}e_4,\\
e_3\vtr e_3=\alpha_{33}e_4,
\end{cases}
\qquad
\begin{cases}
e_1\vtl e_1=-\alpha_{11}e_4,\\
e_1\vtl e_2=-e_3-\alpha_{12}e_4,\\
e_1\vtl e_3=-\alpha_{13}e_4,\\
e_2\vtl e_1=-\alpha_{21}e_4,\\
e_2\vtl e_2=-\alpha_{22}e_4,\\
e_2\vtl e_3=-\alpha_{23}e_4,\\
e_3\vtl e_1=-\alpha_{31}e_4,\\
e_3\vtl e_2=-\alpha_{32}e_4,\\
e_3\vtl e_3=-\alpha_{33}e_4.
\end{cases}
\]

By the basis change $e'_3=e_3+\alpha_{12}e_4$, we can put $\alpha_{12}=0$. Further considering \eqref{id1} for the triple $\{e_1,e_1,e_2\}$, \ $\{e_2,e_1,e_2\}$  and $\{e_1,e_2,e_i\}, i=1,2,3$, we obtain the following restrictions:
$$\alpha_{13}=-1, \ \alpha_{23}=0, \alpha_{3i}=0, \ i=1,2,3.$$

Thus, the multiplication table of the algebra has the following form:
\[
\begin{cases}
e_1\vtr e_1=\alpha_{11}e_4,\\
e_1\vtr e_2=e_3+e_4,\\
e_2\vtr e_1=(-1+\alpha_{21})e_4,\\
e_2\vtr e_2=(1+\alpha_{22})e_4,\\
e_3\vtr e_1=e_4,
\end{cases}
\qquad
\begin{cases}
e_1\vtl e_1=-\alpha_{11}e_4,\\
e_1\vtl e_2=-e_3,\\
e_1\vtl e_3=e_4,\\
e_2\vtl e_1=-\alpha_{21}e_4,\\
e_2\vtl e_2=-\alpha_{22}e_4.
\end{cases}
\]

If we consider the general change of the basis in this algebra to study the behavior of these structure constants of the algebra, then we have the following relations:
$$ \alpha'_{11}=\alpha_{11},\  \alpha'_{21}=\alpha_{21},\ \alpha'_{22}=\alpha_{22}$$
Hence, we have the algebra $AD_4^{41}(\alpha,\beta,\gamma)$.

\textbf{Case $(As_4^{14}, \vtr, \vtl)/\langle e_4\rangle\cong AD_3^{7}(\lambda)$}
\[
\begin{cases}
e_1\vtr e_1=e_3+\alpha_{11}e_4,\\
e_1\vtr e_2=\lambda e_3+(1+\alpha_{12})e_4,\\
e_1\vtr e_3=(1+\alpha_{13})e_4,\\
e_2\vtr e_1=(-1+\alpha_{21})e_4,\\
e_2\vtr e_2=e_3+(1+\alpha_{22})e_4,\\
e_2\vtr e_3=\alpha_{23}e_4,\\
e_3\vtr e_1=(1+\alpha_{31})e_4,\\
e_3\vtr e_2=\alpha_{32}e_4,\\
e_3\vtr e_3=\alpha_{33}e_4,
\end{cases}
\qquad
\begin{cases}
e_1\vtl e_1=-e_3-\alpha_{11}e_4,\\
e_1\vtl e_2=-\lambda e_3-\alpha_{12}e_4,\\
e_1\vtl e_3=-\alpha_{13}e_4,\\
e_2\vtl e_1=-\alpha_{21}e_4,\\
e_2\vtl e_2=-e_3-\alpha_{22}e_4,\\
e_2\vtl e_3=-\alpha_{23}e_4,\\
e_3\vtl e_1=-\alpha_{31}e_4,\\
e_3\vtl e_2=-\alpha_{32}e_4,\\
e_3\vtl e_3=-\alpha_{33}e_4.
\end{cases}
\]

By the basis change $e'_3=e_3+\alpha_{11}e_4$, we can get $\alpha_{11}=0$. Considering \eqref{id3} for the triple $\{e_i,e_1,e_1\}, \ i=1,2,3$, and also considering \eqref{id4} for the triple $\{e_1,e_1,e_i\}, \ i=1,2,$ we obtain the following restrictions on structure constants:
$$\alpha_{13}=-1, \ \alpha_{23}=\alpha_{33}=\alpha_{31}=\alpha_{32}=0.$$

Now we  rewrite
\[
\begin{cases}
e_1\vtr e_1=e_3,\\
e_1\vtr e_2=\lambda e_3+(1+\alpha_{12})e_4,\\
e_2\vtr e_1=(-1+\alpha_{21})e_4,\\
e_2\vtr e_2=e_3+(1+\alpha_{22})e_4,\\
e_3\vtr e_1=e_4,
\end{cases}
\qquad
\begin{cases}
e_1\vtl e_1=-e_3,\\
e_1\vtl e_2=-\lambda e_3-\alpha_{12}e_4,\\
e_1\vtl e_3=e_4,\\
e_2\vtl e_1=-\alpha_{21}e_4,\\
e_2\vtl e_2=-e_3-\alpha_{22}e_4.
\end{cases}
\]

Similarly, by considering the general change of the basis in this algebra we can show the following relations:
$$
\alpha'_{12}=\alpha_{12},\  \alpha'_{21}=\alpha_{21},\  \alpha'_{22}=\alpha_{22}.$$
Hence, we have the algebra $AD_4^{42}[\lambda](\alpha,\beta,\gamma)$
\end{proof}

\begin{thm}\label{theo_3.8}
Any four-dimensional complex anti-dendriform algebra associated to the algebra $As_4^{15}(\alpha)$ is isomorphic to one of the following pairwise non-isomorphic algebras:

$AD_4^{43}(\alpha): \ \begin{cases}e_1\vtr e_2=e_4, \  e_3\vtr e_1=e_4,\\
e_1\vtl e_1=e_4, \ e_1\vtl e_2=(\alpha-1)e_4,\ e_2\vtl e_1=-\alpha e_4, \\ e_2\vtl e_2=e_4, \ e_3\vtl e_1=-e_4, \ e_3\vtl e_3=e_4;
\end{cases}$

$AD_4^{44}(\alpha): \ \begin{cases}e_1\vtr e_2=e_4, \ e_2\vtr e_1=-e_4,\  e_3\vtr e_3=e_4,\\
e_1\vtl e_1=e_4, \  e_1\vtl e_2=(\alpha-1)e_4, \  e_2\vtl e_1=(1-\alpha) e_4, \  e_2\vtl e_2=e_4;
\end{cases}$

$AD_4^{45}(\alpha): \  \begin{cases}e_1\vtr e_1=e_4, \  e_1\vtr e_2=e_4, \  e_2\vtr e_1=-e_4, \  e_3\vtr e_3=e_4,\\
e_1\vtl e_2=(\alpha-1)e_4, \ e_2\vtl e_1=(1-\alpha)e_4, \ e_2\vtl e_2=e_4;
\end{cases}$

$AD_4^{46}(\alpha): \ \begin{cases}e_1\vtr e_2=e_4, \ e_1\vtr e_3=e_4, \ e_2\vtr e_1=-e_4, \ e_2\vtr e_2=e_4, \ e_3\vtr e_1=e_4,\\
e_1\vtl e_1=e_4, \ e_1\vtl e_2=(\alpha-1)e_4, \ e_1\vtl e_3=-e_4, \\ e_2\vtl e_1=(1-\alpha)e_4,  \ e_3\vtl e_1=-e_4, \ e_3\vtl e_3=e_4;
\end{cases}$

$AD_4^{47}(\alpha): \ e_1\vtr e_1=e_4,\ e_1\vtr e_2=\alpha e_4, \ e_2\vtr e_1=-\alpha e_4, \ e_2\vtr e_2=e_4, \ e_3\vtr e_3=e_4.$

\end{thm}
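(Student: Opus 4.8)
The plan is to specialise the argument of Theorems~\ref{theo_3.1}--\ref{theo_3.7} to $(A,\cdot)=As_4^{15}(\alpha)$. First I would substitute into \eqref{id5} the seven triples involving $e_4$ (exactly as in the opening step of each earlier proof) to obtain $e_4\vtr e_i=0$ and $e_i\vtl e_4=0$, and then use $e_4\cdot e_i=e_i\cdot e_4=0$ together with \eqref{cdot} to force the remaining products with $e_4$ to vanish. This identifies $\langle e_4\rangle$ with both ${\rm Z}_{As}(A)$ and ${\rm Z}_{AD}(A)$, so Proposition~\ref{compatible} applies: the quotient $(A/\langle e_4\rangle,\vtr,\vtl)$ is a compatible structure on the three-dimensional abelian algebra $As_4^{15}(\alpha)/\langle e_4\rangle$, hence isomorphic to one of $AD_3^3,\dots,AD_3^7(\lambda)$. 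Each case is then lifted to $A$ by attaching unknown coefficients $\alpha_{ij}$ to $e_4$, as in the displayed multiplication tables of the earlier proofs.

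The four nontrivial quotients $AD_3^4$--$AD_3^7(\lambda)$ I would rule out first, imitating Theorems~\ref{theo_3.5} and~\ref{theo_3.6}. In each of them the common image of $\vtr$ and $\vtl$ in the quotient is the class of $e_3$, so after lifting one is forced (by $e_3\cdot e_3=e_4$) to set $e_3\vtr e_3=(1+\alpha_{33})e_4$ and $e_3\vtl e_3=-\alpha_{33}e_4$. Applying \eqref{id4} to a triple whose inner product $y\vtr z$ already lies in $\langle e_4\rangle$ kills the left-hand side while the right-hand side equals $\alpha_{33}e_4$, giving $\alpha_{33}=0$; applying \eqref{id4} to a triple of the form $\{e_3,-,-\}$ whose inner product reproduces $e_3$ yields $e_3\vtr e_3=(1+\alpha_{33})e_4$ on the left and $0$ on the right, giving $\alpha_{33}=-1$. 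These are incompatible, so none of $AD_3^4$--$AD_3^7(\lambda)$ occurs. This step uses only the self-product $e_3\cdot e_3=e_4$ of $As_4^{15}(\alpha)$ and is purely mechanical.

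It remains to analyse the trivial quotient $AD_3^3$, which carries all the content. Here $\vtr$ and $\vtl$ both map into the central line $\langle e_4\rangle$, so $(A,\vtr,\vtl)$ is $2$-nilpotent and every instance of \eqref{id1}--\eqref{id7} collapses to $0=0$; no further constraint is imposed. Thus the structure is encoded by a single bilinear form $\beta$ on $\langle e_1,e_2,e_3\rangle$ via $e_i\vtr e_j=\beta_{ij}e_4$, with $\vtl=\cdot-\vtr$ determined. The operation $\vtr$ makes $A$ a $2$-nilpotent, three-generated, four-dimensional associative algebra, so by Theorem~\ref{table} it is isomorphic to exactly one of $As_4^{3},As_4^{6},As_4^{10},As_4^{14},As_4^{15}(\alpha)$ (these being the only $2$-nilpotent entries with $\dim A^2=1$). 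Realising each of these five types inside the fixed product $\cdot=As_4^{15}(\alpha)$ and computing $\vtl=\cdot-\vtr$ is what produces the representatives $AD_4^{43}(\alpha)$--$AD_4^{47}(\alpha)$.

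Pairwise non-isomorphism is then immediate: an isomorphism of anti-dendriform algebras intertwines the $\vtr$-products, hence restricts to an associative isomorphism of the algebras $(A,\vtr)$, and the five associative types above are pairwise non-isomorphic in Theorem~\ref{table}. I expect the genuine difficulty to lie entirely in the last, rank-three type $(A,\vtr)\cong As_4^{15}(\alpha)$. There the symmetric part of $\beta$ attains the same maximal rank as that of $\cdot$, so the discrepancy $\cdot-\vtr$ is purely antisymmetric, and deciding which forms $\beta$ are equivalent requires computing the orbits of the automorphism group of $As_4^{15}(\alpha)$ --- the conformal group $\{M:M^{T}CM=\lambda C,\ \lambda\in\mathbb{C}^{*}\}$ of the Gram matrix $C$ of $\cdot$ on $\langle e_1,e_2,e_3\rangle$ --- on these rank-three forms. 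Since this group preserves $C$ only up to scaling, a $\vtr$-parameter could a priori survive as a modulus; the delicate point is to carry out this orbit computation carefully and decide whether the rank-three case indeed collapses to the single representative $AD_4^{47}(\alpha)$ claimed. The excluded values $\alpha=\pm i$, where $C$ degenerates and the centre of $As_4^{15}(\alpha)$ becomes two-dimensional, fall outside the one-dimensional-centre hypothesis and need not be treated.
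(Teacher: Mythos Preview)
Your plan matches the paper's proof essentially step for step: the same seven triples are fed into \eqref{id5} to kill all products involving $e_4$, Proposition~\ref{compatible} reduces to the abelian three-dimensional quotient, the four nontrivial cases $AD_3^4$--$AD_3^7(\lambda)$ are eliminated via \eqref{id4} by exactly the contradiction $\alpha_{33}=0$ versus $\alpha_{33}=-1$ that you describe, and the surviving case $AD_3^3$ is dispatched by the one-line observation that $(A,\vtr)$ is a $2$-nilpotent three-generated associative algebra, hence one of the five types in Theorem~\ref{table}. The paper does not carry out the orbit computation under $\mathrm{Aut}(As_4^{15}(\alpha))$ that you rightly flag as the delicate point either --- it invokes Theorem~\ref{table} just as summarily --- so on this step your proposal is exactly as complete as the published argument, and your explicit acknowledgement of the residual issue is in fact more scrupulous.
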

\begin{proof}
By considering \eqref{id5} for the following triples
$$\{e_1,e_3,e_3\},\
\{e_3,e_3,e_1\},\
\{e_3,e_3,e_2\}, \
\{e_2,e_3,e_3\},\
\{e_2,e_2,e_3\},\ \{e_3,e_2,e_2\},\
\{e_3,e_3,e_4\},$$ we get
$e_1\vtl e_4=0, \ e_4\vtr e_1=0, \ e_4\vtr e_2=0, \ e_2\vtl e_4=0, \ e_4\vtr e_3=0, \ e_3\vtl e_4=0, \ e_4\vtr e_4=0,$ respectively.

By $e_4e_i=e_4\vtr e_i+e_4\vtl e_i=0$ and $e_ie_4=e_i\vtr e_4+e_i\vtl e_4=0$ these imply $e_4\vtl e_i=0$ and $e_i\vtr e_4=0$. Then it is easy to see that $\langle e_4\rangle$ is the center of the algebras $(As_4^{15}(\alpha), \cdot)$ and $(As_4^{15}(\alpha), \vtr, \vtl)$. If we take the algebra $As_4^{15}(\alpha)/\langle e_4\rangle,$ this algebra is a three-dimensional associative abelian algebra, and we know that any three-dimensional complex anti-dendriform algebra associated with the abelian algebra is isomorphic to one of the pairwise non-isomorphic algebras $AD_3^3-AD_3^7(\lambda).$

According to Proposition \ref{compatible} one can write

\textbf{Case $(As_4^{15}(\alpha), \vtr, \vtl)/\langle e_4\rangle\cong AD_3^{3}$}
\[
\begin{cases}
e_1\vtr e_1=(1+\alpha_{11})e_4,\\
e_1\vtr e_2=(\alpha+\alpha_{12})e_4,\\
e_1\vtr e_3=\alpha_{13}e_4,\\
e_2\vtr e_1=(-\alpha+\alpha_{21})e_4,\\
e_2\vtr e_2=(1+\alpha_{22})e_4,\\
e_2\vtr e_3=\alpha_{23}e_4,\\
e_3\vtr e_1=\alpha_{31}e_4,\\
e_3\vtr e_2=\alpha_{32}e_4,\\
e_3\vtr e_3=(1+\alpha_{33})e_4,
\end{cases}
\qquad
\begin{cases}
e_1\vtl e_1=-\alpha_{11}e_4,\\
e_1\vtl e_2=-\alpha_{12}e_4,\\
e_1\vtl e_3=-\alpha_{13}e_4,\\
e_2\vtl e_1=-\alpha_{21}e_4,\\
e_2\vtl e_2=-\alpha_{22}e_4,\\
e_2\vtl e_3=-\alpha_{23}e_4,\\
e_3\vtl e_1=-\alpha_{31}e_4,\\
e_3\vtl e_2=-\alpha_{32}e_4,\\
e_3\vtl e_3=-\alpha_{33}e_4.
\end{cases}
\]

It is not difficult to see that $(As_4^{15}, \vtr, \vtl)$ are 2-nilpotent and have three generator elements. Thus we have $(x\vtr y)\vtr z=x\vtr (y\vtr z)=0$ which implies that $(As_4^{15}, \vtr)$ is also associative 2-nilpotent algebra and has three generator elements. According to Theorem \ref{table} there are five non-isomorphic four-dimensional indecomposable associative 2-nilpotent and the three generated algebras. Hence, we get $AD_4^{43}(\alpha)-AD_4^{47}(\alpha)$ algebras.

It is possible to consider the multiplication $\vtl$ as above. However, it is not difficult to show that the constructed algebras are isomorphic.

Now if $(As_4^{15}, \vtr, \vtl)/\langle e_4\rangle$ ante-dendriform algebra is isomorphic to one of the algebras $AD_3^{4}-AD_3^{7}(\alpha)$, then we will proof there is not a four-dimensional complex anti-dendriform algebra associated to the associative algebra $As_4^{15}.$

\textbf{Case $(As_4^{15}(\alpha), \vtr, \vtl)/\langle e_4\rangle\cong AD_3^{4}$}
\[
\begin{cases}
e_1\vtr e_1=(1+\alpha_{11})e_4,\\
e_1\vtr e_2=e_3+(\alpha+\alpha_{12})e_4,\\
e_1\vtr e_3=\alpha_{13}e_4,\\
e_2\vtr e_1=-e_3+(-\alpha+\alpha_{21})e_4,\\
e_2\vtr e_2=(1+\alpha_{22})e_4,\\
e_2\vtr e_3=\alpha_{23}e_4,\\
e_3\vtr e_1=\alpha_{31}e_4,\\
e_3\vtr e_2=\alpha_{32}e_4,\\
e_3\vtr e_3=(1+\alpha_{33})e_4,
\end{cases}
\qquad
\begin{cases}
e_1\vtl e_1=-\alpha_{11}e_4,\\
e_1\vtl e_2=-e_3-\alpha_{12}e_4,\\
e_1\vtl e_3=-\alpha_{13}e_4,\\
e_2\vtl e_1=e_3-\alpha_{21}e_4,\\
e_2\vtl e_2=-\alpha_{22}e_4,\\
e_2\vtl e_3=-\alpha_{23}e_4,\\
e_3\vtl e_1=-\alpha_{31}e_4,\\
e_3\vtl e_2=-\alpha_{32}e_4,\\
e_3\vtl e_3=-\alpha_{33}e_4.
\end{cases}
\]

Considering the identity \eqref{id4} for the triples $\{e_2,e_1,e_3\}$ and $\{e_3,e_1,e_2\}$, we obtain the following
restrictions on structure constants:
$$\alpha_{33}=0, \ \ \alpha_{33}=-1.$$
This implies a contradiction. Hence, it means that there is no anti-dendriform algebra associated to the algebra $As_4^{15}$ and satisfying the condition $(As_4^{15}(\alpha), \vtr, \vtl)/\langle e_4\rangle\cong AD_3^4$.

\textbf{Case $(As_4^{15}(\alpha), \vtr, \vtl)/\langle e_4\rangle\cong AD_3^{5}$}
\[
\begin{cases}
e_1\vtr e_1=e_3+(1+\alpha_{11})e_4,\\
e_1\vtr e_2=(\alpha+\alpha_{12})e_4,\\
e_1\vtr e_3=\alpha_{13}e_4,\\
e_2\vtr e_1=(-\alpha+\alpha_{21})e_4,\\
e_2\vtr e_2=(1+\alpha_{22})e_4,\\
e_2\vtr e_3=\alpha_{23}e_4,\\
e_3\vtr e_1=\alpha_{31}e_4,\\
e_3\vtr e_2=\alpha_{32}e_4,\\
e_3\vtr e_3=(1+\alpha_{33})e_4,
\end{cases}
\qquad
\begin{cases}
e_1\vtl e_1=-e_3-\alpha_{11}e_4,\\
e_1\vtl e_2=-\alpha_{12}e_4,\\
e_1\vtl e_3=-\alpha_{13}e_4,\\
e_2\vtl e_1=-\alpha_{21}e_4,\\
e_2\vtl e_2=-\alpha_{22}e_4,\\
e_2\vtl e_3=-\alpha_{23}e_4,\\
e_3\vtl e_1=-\alpha_{31}e_4,\\
e_3\vtl e_2=-\alpha_{32}e_4,\\
e_3\vtl e_3=-\alpha_{33}e_4.
\end{cases}
\]

By considering \eqref{id4} for the triples $\{e_1,e_1,e_3\}$ and $\{e_3,e_1,e_1\}$, we reduce
$$\alpha_{33}=0, \ \alpha_{33}=-1,$$
which is a contradiction. Hence, there is not such a case.

\textbf{Case $(As_4^{15}(\alpha), \vtr, \vtl)/\langle e_4\rangle\cong AD_3^{6}$}
\[
\begin{cases}
e_1\vtr e_1=(1+\alpha_{11})e_4,\\
e_1\vtr e_2=e_3+(\alpha+\alpha_{12})e_4,\\
e_1\vtr e_3=\alpha_{13}e_4,\\
e_2\vtr e_1=(-\alpha+\alpha_{21})e_4,\\
e_2\vtr e_2=(1+\alpha_{22})e_4,\\
e_2\vtr e_3=\alpha_{23}e_4,\\
e_3\vtr e_1=\alpha_{31}e_4,\\
e_3\vtr e_2=\alpha_{32}e_4,\\
e_3\vtr e_3=(1+\alpha_{33})e_4,
\end{cases}
\qquad
\begin{cases}
e_1\vtl e_1=-\alpha_{11}e_4,\\
e_1\vtl e_2=-e_3-\alpha_{12}e_4,\\
e_1\vtl e_3=-\alpha_{13}e_4,\\
e_2\vtl e_1=-\alpha_{21}e_4,\\
e_2\vtl e_2=-\alpha_{22}e_4,\\
e_2\vtl e_3=-\alpha_{23}e_4,\\
e_3\vtl e_1=-\alpha_{31}e_4,\\
e_3\vtl e_2=-\alpha_{32}e_4,\\
e_3\vtl e_3=-\alpha_{33}e_4.
\end{cases}
\]

By consediring the identity \eqref{id4} for the triples $\{e_1,e_2,e_3\}$ and $\{e_3,e_1,e_2\}$ we get the contradiction $\alpha_{33}=0$ and $\alpha_{33}=-1$. Therefore, there is not such a case.

\textbf{Case $(As_4^{15}(\alpha), \vtr, \vtl)/\langle e_4\rangle\cong AD_3^{7}(\lambda)$}
\[
\begin{cases}
e_1\vtr e_1=e_3+(1+\alpha_{11})e_4,\\
e_1\vtr e_2=\lambda e_3+(\alpha+\alpha_{12})e_4,\\
e_1\vtr e_3=\alpha_{13}e_4,\\
e_2\vtr e_1=(-\alpha+\alpha_{21})e_4,\\
e_2\vtr e_2=e_3+(1+\alpha_{22})e_4,\\
e_2\vtr e_3=\alpha_{23}e_4,\\
e_3\vtr e_1=\alpha_{31}e_4,\\
e_3\vtr e_2=\alpha_{32}e_4,\\
e_3\vtr e_3=(1+\alpha_{33})e_4,
\end{cases}
\qquad
\begin{cases}
e_1\vtl e_1=-e_3-\alpha_{11}e_4,\\
e_1\vtl e_2=-\lambda e_3-\alpha_{12}e_4,\\
e_1\vtl e_3=-\alpha_{13}e_4,\\
e_2\vtl e_1=-\alpha_{21}e_4,\\
e_2\vtl e_2=-e_3-\alpha_{22}e_4,\\
e_2\vtl e_3=-\alpha_{23}e_4,\\
e_3\vtl e_1=-\alpha_{31}e_4,\\
e_3\vtl e_2=-\alpha_{32}e_4,\\
e_3\vtl e_3=-\alpha_{33}e_4.
\end{cases}
\]
Consider \eqref{id4} for the triples $\{e_1,e_1,e_3\}$ and $\{e_3,e_1,e_1\}$ we obtain $\alpha_{33}=0, \ \alpha_{33}=-1,$
which implies a contradiction. Hence, there is not such a case.

\end{proof}

\end{document}